\documentclass{amsart}
\usepackage[letterpaper, margin=1.4in]{geometry}
\usepackage{amssymb, latexsym, MnSymbol}
\usepackage{mathtools}
\usepackage{color, xcolor}
\usepackage{amscd, graphicx, psfrag, tikz}
\usepackage[mathscr]{euscript}
\usepackage[all]{xy}

\usepackage{enumitem}
\usepackage{stmaryrd}
\usepackage{url}
\usepackage{hyperref}
\hypersetup{
    colorlinks=true,
    linkcolor=black,
    citecolor=black,
    urlcolor=black
}
\usepackage{verbatim}




\newcommand{\bC}{ {\mathbb{C}} }

\newcommand{\bP}{\mathbb{P}}

\newcommand{\bR}{\mathbb{R}}

\newcommand{\bT}{\mathbb{T}}

\newcommand{\bZ}{\mathbb{Z}}


\newcommand{\cL}{\mathcal{L}}
\newcommand{\cM}{\mathcal{M}}
\newcommand{\cO}{\mathcal{O}}

\newcommand{\cT}{\mathcal{T}}
\newcommand{\cX}{\mathcal{X}}


\newcommand{\Hom}{\mathrm{Hom}}

\newcommand{\Spec}{\mathrm{Spec}}

\newcommand{\ev}{\mathrm{ev}}

\newcommand{\vir}{ {\mathrm{vir}} }

\newcommand{\pt}{\mathrm{pt}}

\newcommand{\NE}{{\mathrm{NE}}}

\newcommand{\Id}{\mathrm{Id}}
\newcommand{\Tot}{\mathrm{Tot}}
\newcommand{\red}{\mathrm{red}}


\newcommand{\one}{\mathbf{1}}

\newcommand{\bw}{\mathbf{w}}




\newcommand{\su}{\mathsf{u}}
\newcommand{\sv}{\mathsf{v}}

\newcommand{\sX}{\mathsf{X}}


\newcommand{\hH}{\hat{H}}
\newcommand{\hM}{\hat{M}}


\newcommand{\tGa}{\widetilde{\Gamma}}

\newcommand{\tphi}{\widetilde{\phi}}
\newcommand{\tSi}{\widetilde{\Sigma}}
\newcommand{\trho}{\widetilde{\rho}}

\newcommand{\tA}{\widetilde{A}}
\newcommand{\tB}{\widetilde{B}}
\newcommand{\tC}{\widetilde{C}}
\newcommand{\tD}{\widetilde{D}}

\newcommand{\tM}{\widetilde{M}}
\newcommand{\tN}{\widetilde{N}}

\newcommand{\tQ}{\widetilde{Q}}

\newcommand{\tT}{\widetilde{T}}

\newcommand{\tX}{\widetilde{X}}

\newcommand{\tb}{\widetilde{b}}
\newcommand{\tc}{\widetilde{c}}

\newcommand{\tg}{\widetilde{g}}

\newcommand{\tp}{\widetilde{p}}

\newcommand{\tgamma}{\widetilde{\gamma}}
\newcommand{\tsi}{\widetilde{\sigma}}
\newcommand{\tbeta}{\widetilde{\beta}}

\newcommand{\ttau}{\widetilde{\tau}}

\newcommand{\tbw}{\widetilde{\bw}}


\newcommand{\vd}{\vec{d}}
\newcommand{\vf}{\vec{f}}

\newcommand{\vs}{\vec{s}}


\newcommand{\Mbar}{\overline{\cM}}

\newcommand{\Sbar}{\overline{S}}

\newcommand{\inner}[1]{\langle  #1 \rangle}
\newcommand{\formal}[1]{\llbracket  #1 \rrbracket}


\newtheorem{dummy}{dummy}[section]
\newtheorem{lemma}[dummy]{Lemma}
\newtheorem{theorem}[dummy]{Theorem}

\newtheorem{proposition}[dummy]{Proposition}
\newtheorem{remark}[dummy]{Remark}
\newtheorem{definition}[dummy]{Definition}

\newtheorem{assumption}[dummy]{Assumption}

\begin{document}
\title{Open WDVV equations and Frobenius structures for toric Calabi-Yau 3-folds}

\author{Song Yu}
\address{Song Yu, Yau Mathematical Sciences Center, Tsinghua University, Haidian District, Beijing 100084, China}
\email{song-yu@tsinghua.edu.cn}

\author{Zhengyu Zong}
\address{Zhengyu Zong, Department of Mathematical Sciences,
Tsinghua University, Haidian District, Beijing 100084, China}
\email{zyzong@mail.tsinghua.edu.cn}

\begin{abstract}
Let $X$ be a toric Calabi-Yau 3-fold and let $L\subset X$ be an Aganagic-Vafa outer brane. We prove two versions of open WDVV equations for the open Gromov-Witten theory of $(X,L)$. The first version of the open WDVV equation leads to the construction of a semi-simple (formal) Frobenius manifold and the second version leads to the construction of a flat (formal) $F$-manifold.
\end{abstract}
\maketitle

\setcounter{tocdepth}{1}
\tableofcontents

\section{Introduction}\label{sect:Intro}

\subsection{Historical background and motivation}

\subsubsection{WDVV equations and Frobenius manifolds}
The \emph{Witten-Dijkgraaf-Verlinde-Verlinde} \emph{(WDVV) equation} is a system of non-linear partial differential equations for one function, depending on a finite number of variables. One of the most important applications of the WDVV equation is the study of the quantum cohomology of a smooth projective variety $\cX$ over $\bC$. Let $\{T_i\}_{i=1}^m$ be a basis of $H^*(\cX)$\footnote{In this paper, $H^*(-)$ takes $\bC$-coefficients unless otherwise specified.} and $t^1,\dots,t^m$ be the corresponding coordinates. Let
$$
g_{ij}=( T_i,T_j  )_{\cX}=\int_{\cX}T_i\cup T_j
$$
and $(g^{ij})=(g_{ij})^{-1}$. Let $F_0^{\cX}$ be the generating function of genus-zero Gromov-Witten invariants of $\cX$ which depends on the variables $t^1,\dots,t^m$. The following theorem is the WDVV equation in Gromov-Witten theory, first proved in \cite{MS94, RT94}.

\begin{theorem}[\cite{MS94, RT94}]\label{thm:WDVV general}
For any $i,j,k,l\in\{1,\dots,m\}$, the following WDVV equation holds
$$
\frac{\partial^3 F_0^{\cX}}{\partial t^i\partial t^j\partial t^\nu}
\cdot g^{\nu\mu}\cdot \frac{\partial^3 F_0^{\cX}}{\partial t^\mu\partial t^k\partial t^l}
=\frac{\partial^3 F_0^{\cX}}{\partial t^j\partial t^k\partial t^\nu}
\cdot g^{\nu\mu}\cdot \frac{\partial^3 F_0^{\cX}}{\partial t^\mu\partial t^i\partial t^l}.
$$
\end{theorem}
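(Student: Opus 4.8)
The plan is to derive the WDVV equation from the associativity of the quantum product on $H^*(\cX)$, which itself is a consequence of the splitting axiom of Gromov--Witten theory together with the linear equivalence of the three boundary points of $\Mbar_{0,4}\cong\bP^1$.

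First I would repackage $F_0^{\cX}$ as a family of products. Writing $\gamma=\sum_{i=1}^m t^i T_i$ for a general cohomology class, one has
$$\frac{\partial^3 F_0^{\cX}}{\partial t^i\partial t^j\partial t^k}=\sum_{\beta}\sum_{n\ge 0}\frac{1}{n!}\big\langle T_i,T_j,T_k,\underbrace{\gamma,\dots,\gamma}_{n}\big\rangle_{0,n+3,\beta}^{\cX}q^{\beta},$$
where $\langle\cdots\rangle^{\cX}_{0,m,\beta}$ denotes the genus-zero Gromov--Witten invariant and the $\beta=0$, $n=0$ term is the classical triple intersection $\tfrac16\int_{\cX}\gamma^3$. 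Define the quantum product by $T_i*_t T_j:=\sum_{k,l}\big(\partial_i\partial_j\partial_k F_0^{\cX}\big)g^{kl}T_l$. Symmetry of Gromov--Witten invariants under permuting marked points makes $*_t$ commutative, while the string equation --- which is precisely what forces the classical term into $F_0^{\cX}$, in order to account for the unstable moduli spaces $\Mbar_{0,n}$ with $n<3$ --- shows that $T_0=1\in H^0(\cX)$ is a unit. Pairing against the intersection form yields
$$\big((T_i*_t T_j)*_t T_k,\,T_l\big)_{\cX}=\frac{\partial^3 F_0^{\cX}}{\partial t^i\partial t^j\partial t^\nu}\cdot g^{\nu\mu}\cdot\frac{\partial^3 F_0^{\cX}}{\partial t^\mu\partial t^k\partial t^l},$$
which is the left-hand side of the theorem, and the right-hand side equals $\big(T_i*_t(T_j*_t T_k),\,T_l\big)_{\cX}$ after using commutativity. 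Hence the theorem is equivalent to the associativity of $*_t$.

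I would then prove associativity geometrically. Fix $\beta$ and $n$, place insertions $\gamma$ at the last $n$ of $n+4$ marked points, and consider the forgetful morphism $\pi\colon\Mbar_{0,n+4}(\cX,\beta)\to\Mbar_{0,4}\cong\bP^1$ remembering the first four points. The three boundary points of $\bP^1$, indexed by the partitions $(12\,|\,34)$, $(13\,|\,24)$, $(14\,|\,23)$ of the first four markings, are linearly equivalent; therefore their $\pi$-preimages cap $[\Mbar_{0,n+4}(\cX,\beta)]^{\vir}$ to the same number after evaluation against $T_i\otimes T_j\otimes T_k\otimes T_l\otimes\gamma^{\otimes n}$. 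By the splitting axiom, $\pi^{-1}$ of the point $(12\,|\,34)$ decomposes, compatibly with virtual classes, as the sum over splittings $\beta=\beta_1+\beta_2$ and over distributions of the $n$ free points between the two components of fibered products $\Mbar_{0,\bullet}(\cX,\beta_1)\times_{\cX}\Mbar_{0,\bullet}(\cX,\beta_2)$ glued at the node; inserting the diagonal class $\sum_{\mu,\nu}g^{\mu\nu}T_\mu\otimes T_\nu$ and resumming over $n$ with the weights $\tfrac1{n!}$ reproduces exactly $\sum_{\mu,\nu}(\partial_i\partial_j\partial_\mu F_0^{\cX})\,g^{\mu\nu}\,(\partial_\nu\partial_k\partial_l F_0^{\cX})$. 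The partition $(14\,|\,23)$ gives in the same way $\sum_{\mu,\nu}(\partial_i\partial_l\partial_\mu F_0^{\cX})\,g^{\mu\nu}\,(\partial_\nu\partial_j\partial_k F_0^{\cX})$, and equating the two, using symmetry of $g$, gives the asserted identity.

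I expect the main obstacle to be infrastructural rather than in the line of argument itself: the proof requires the virtual fundamental classes $[\Mbar_{0,n}(\cX,\beta)]^{\vir}$ and their compatibility under the gluing/splitting and forgetful morphisms, which for a general smooth projective $\cX$ is the substance of the foundations of Gromov--Witten theory and which I would cite rather than reprove. The only genuinely delicate book-keeping internal to the proof is the handling of the unstable strata, i.e.\ components with $\beta_i=0$ and fewer than three special points: these are exactly what necessitate the classical term in $F_0^{\cX}$ and the use of the string and divisor equations, and once included they reassemble into the derivatives of $F_0^{\cX}$ with no leftover contributions.
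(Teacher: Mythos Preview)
The paper does not prove this theorem: it is stated as a background result with citation to \cite{MS94, RT94} and no proof is given. Your proposal reproduces the standard argument (associativity of the quantum product via the splitting axiom and the linear equivalence of the three boundary points of $\Mbar_{0,4}\cong\bP^1$), which is correct and is essentially the proof in the cited references.
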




The importance of the WDVV equation is that it implies the associativity of the \emph{quantum product} $\star_t$ defined by
$$
(T_i\star_t T_j,T_k)_{\cX}=\frac{\partial^3 F_0^{\cX}}{\partial t^i\partial t^j\partial t^k}
$$
for $i,j\in\{1,\dots,m\}$. The associativity of the quantum product has many important applications. A typical example is the simple, recursive formula given by Kontsevich and Manin \cite{KM94} that calculates the Gromov-Witten invariants of $\bP^2$. The geometric insight behind the formula is a splitting principle which is captured by the associativity of the quantum product. The WDVV equation and Kontsevich-Manin axioms were then used by G\"{o}ttsche and Pandharipande \cite{GP98} to give a set of formulae that recursively compute the Gromov-Witten invariants of $\bP^2_r$, the blowup of $\bP^2$ at $r$ points.


Moreover, the quantum product determines the structure of a \emph{Frobenius manifold}. 

\begin{definition}\label{def:Frobenius}
A \emph{complex Frobenius manifold} consists of the data $(M,g,A,\one)$ where
\begin{enumerate}
\item $M$ is a complex manifold of dimension $m$;
\item $g$ is a flat holomorphic metric on the tangent bundle $\cT_M$;
\item $A$ is a holomorphic tensor
$$
A:\cT_M\otimes \cT_M\otimes \cT_M\to \cO_M,
$$
where $\cO_M$ is the sheaf of holomorphic functions on $M$.
\item $\one$ is a holomorphic vector field on $M$.
\end{enumerate}
The above data are required to satisfy the following conditions.
\begin{enumerate}
\item (Potentiality) $M$ is covered by open sets $U$ each equipped with a commuting basis of $g-$flat holomorphic vector fields,
$$
X_1,\dots,X_m\in \cT_M(U)
$$
and a holomorphic potential function $F\in \cO_U(U)$ such that
$$
A(X_i,X_j,X_k)=X_iX_jX_k(F).
$$

\item (Associativity) Define a commutative product $\star$ on $\cT_M$ by
$$
g(X\star Y,Z)=A(X,Y,Z)
$$
where $X,Y,Z$ are holomorphic vector fields. Then we require that $\star$ is associative.
\item (Unit) $\one$ is $g-$flat and is a unit for the product $\star$.
\end{enumerate}

\end{definition}

The structure of Frobenius manifolds appears in different areas of mathematics including the singularity theory and curve counting theories in algebraic geometry (Gromov-Witten theory, Fan-Jarvis-Ruan-Witten theory). A systematic study of Frobenius manifolds was first done by Dubrovin \cite{Dub96, Dub99}. Again, the associativity of the product $\star$ is equivalent to the fact that the potential function $F$ in Definition \ref{def:Frobenius} satisfies the WDVV equation in Theorem \ref{thm:WDVV general} by replacing $\frac{\partial}{\partial t^i}$ by $X_i$. Let $\nabla$ be the Levi-Civita connection corresponding to the metric $g$. For $z\in\bP^1$, define the \emph{Dubrovin connection} $\nabla^z$ as
$$
\nabla_{X}^z(Y)=\nabla_{X}(Y)-\frac{1}{z}X\star Y.
$$
Then it is easy to see that the associativity of $\star$ is equivalent to the flatness of $\nabla^z$ and that the commutativity of $\star$ is equivalent to the fact that $\nabla^z$ is symmetric.

In the case of quantum cohomology, suppose that the genus-zero Gromov-Witten potential $F_0^{\cX}$ is convergent in a neighborhood $U$ of the origin. One may take $M$ to be $U$ and the potential function $F$ to be $F_0^{\cX}$ in Definition \ref{def:Frobenius}. Moreover, let the metric $g$ be given by the Poincar\'{e} pairing on $H^*(\cX)$ and $\one$ be the identity in $H^*(\cX)$. Then one obtains a Frobenius manifold. In general, the genus-zero Gromov-Witten potential $F_0^{\cX}$ is not convergent. Then one can replace the above formalism by considering \emph{formal} Frobenius manifolds (see \cite{Manin99,LP}). Specifically, one can replace the complex manifold $M$ by the formal scheme $\hat{H} :=\mathrm{Spec}(\Lambda_{\cX}\formal{t^1,\dots,t^m})$ over the base ring $\Lambda_{\cX}$ which is the Novikov ring of $\cX$. Then one may view $F_0^{\cX}$ as a regular function on $\hat{H}$ and obtain a formal Frobenius manifold. See Section \ref{sect:closedFrob} for additional details, including definitions of formal Frobenius manifolds over general base rings.

\subsubsection{Open WDVV equations and $F$-manifolds}\label{sec:F-mfd}
The \emph{open WDVV equation} is a system of non-linear partial differential equations that extends the WDVV equation by introducing an additional variable $t^o$ for the open sector and an additional potential function $F^o(t^1,\dots,t^m,t^o)$ called the \emph{open potential function}. One of the most important motivations to introduce the open WDVV equation is to study open Gromov-Witten theory. In \cite{Solomon07,HS,ST19}, open Gromov-Witten invariants of $(\cX,\cL)$ are studied for certain symplectic manifolds $\cX$ and Lagrangian submanifolds $\cL\subset\cX$. In these cases, one can introduce the \emph{disk potential} $F^{\cX,\cL}_{0,1}$ which is the generating function of disk Gromov-Witten invariants of $(\cX,\cL)$. As before, let $\{T_i\}_{i=1}^m$ be a basis of $H^*(\cX)$ and $t^1,\dots,t^m$ be the corresponding coordinates. We still consider the Poincar\'{e} pairing $g_{ij}=( T_i,T_j)_{\cX}=\int_{\cX}T_i\cup T_j$ and let $(g^{ij})=(g_{ij})^{-1}$. Let $F_0^{\cX}$ be the generating function of genus-zero Gromov-Witten invariants of $\cX$, which depends on the variables $t^1,\dots,t^m$ but is independent of the additional variable $t^o$. On the other hand, the disk potential $F^{\cX,\cL}_{0,1}$ depends on $t^1,\dots,t^m$ as well as $t^o$. The variable $t^o$ encodes the point-like insertions from the boundary marked points of the domain disk (see \cite{HS, ST19} for more details). The following open WDVV equation is proved in \cite{HS,ST19}.

\begin{theorem}[\cite{HS,ST19}]\label{thm:openWDVV point}
For any $i,j,k\in\{1,\dots,m\}$, the following open WDVV equation holds:
\begin{eqnarray*}
\frac{\partial^3 F_0^{\cX}}{\partial t^i\partial t^j\partial t^\mu}g^{\mu\nu}\frac{\partial^2 F^{\cX,\cL}_{0,1}}{\partial t^\nu\partial t^k} +\frac{\partial^2 F^{\cX,\cL}_{0,1}}{\partial t^i\partial t^j}\frac{\partial^2 F^{\cX,\cL}_{0,1}}{\partial t^o\partial t^k} &=&
\frac{\partial^3 F_0^{\cX}}{\partial t^k\partial t^j\partial t^\mu}g^{\mu\nu}\frac{\partial^2 F^{\cX,\cL}_{0,1}}{\partial t^\nu\partial t^i}+\frac{\partial^2 F^{\cX,\cL}_{0,1}}{\partial t^k\partial t^j}\frac{\partial^2 F^{\cX,\cL}_{0,1}}{\partial t^o\partial t^i},\\
\frac{\partial^3 F_0^{\cX}}{\partial t^i\partial t^j\partial t^\mu}g^{\mu\nu}\frac{\partial^2 F^{\cX,\cL}_{0,1}}{\partial t^\nu\partial t^o}+\frac{\partial^2 F^{\cX,\cL}_{0,1}}{\partial t^i\partial t^j}\frac{\partial^2 F^{\cX,\cL}_{0,1}}{(\partial t^o)^2}&=&
 \frac{\partial^2 F^{\cX,\cL}_{0,1}}{\partial t^o\partial t^j}\frac{\partial^2 F^{\cX,\cL}_{0,1}}{\partial t^o\partial t^i}.
\end{eqnarray*}

\end{theorem}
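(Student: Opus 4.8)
The plan is to deduce both equations from the structure of the codimension-one boundary of suitable one-dimensional (real) moduli spaces of stable disk maps, in direct analogy with the way the closed WDVV equation of Theorem~\ref{thm:WDVV general} is read off from the linear equivalence of the boundary divisors of $\Mbar_{0,4}\cong\bP^1$. First I would fix notation: let $\Mbar_{0,(k,l)}(\cX,\cL;\beta)$ denote the moduli space of stable maps from genus-zero bordered Riemann surfaces with connected boundary, carrying $k$ ordered boundary and $l$ ordered interior marked points, of class $\beta\in H_2(\cX,\cL;\bZ)$, with boundary evaluation maps $\mathrm{ev}^\partial_a\colon\Mbar_{0,(k,l)}(\cX,\cL;\beta)\to\cL$ and interior evaluation maps $\mathrm{ev}^{\mathrm{int}}_b\colon\Mbar_{0,(k,l)}(\cX,\cL;\beta)\to\cX$. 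After fixing a relative spin structure on $\cL$ (or, in the toric Calabi--Yau setting, using the torus action together with a choice of framing of $\cL$) to orient these spaces, one realizes $F^{\cX,\cL}_{0,1}$ as the generating function whose coefficient of $\tfrac{(t^o)^k}{k!}\prod_c t^{i_c}$ is the integral over $[\Mbar_{0,(k+1,l)}(\cX,\cL;\beta)]^{\vir}$ of $\prod_a(\mathrm{ev}^\partial_a)^*[\pt_\cL]\cdot\prod_b(\mathrm{ev}^{\mathrm{int}}_b)^*T_{i_b}$, the $(k{+}1)$-st boundary point being the distinguished unintegrated one; thus $\partial/\partial t^o$ glues on a point-constrained boundary marked point and $\partial/\partial t^i$ glues on a $T_i$-constrained interior marked point.

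The core step is a degeneration argument on two families. For the first equation, fix three interior insertions $T_i,T_j,T_k$ and one distinguished boundary point constrained to $\pt_\cL$, together with a background of interior insertions $\gamma=\sum_a t^aT_a$ and of point-constrained boundary insertions, and a class $\beta$, chosen so that the cut-down moduli space $\cM$ is one-dimensional; being a compact one-manifold with boundary, the signed count of $\partial\cM$ vanishes. The points of $\partial\cM$ are of two kinds: (i) a sphere component bubbles off at an interior node, absorbing two of the interior insertions; (ii) the domain breaks into two disk components at a boundary node, with the insertions distributed between the halves. Gluing the interior node of type (i) against $\sum_{\mu,\nu}g^{\mu\nu}T_\mu\otimes T_\nu$ (the closed splitting axiom) and the boundary node of type (ii) using the point constraint $[\pt_\cL]$ on $\cL$ (which supplies the extra boundary marked point on each side), one finds that the two endpoints of $\cM$ contribute $\Phi(i,j;k)$ and $\Phi(k,j;i)$, where $\Phi(a,b;c):=\frac{\partial^3F_0^{\cX}}{\partial t^a\partial t^b\partial t^\mu}g^{\mu\nu}\frac{\partial^2F^{\cX,\cL}_{0,1}}{\partial t^\nu\partial t^c}+\frac{\partial^2F^{\cX,\cL}_{0,1}}{\partial t^a\partial t^b}\frac{\partial^2F^{\cX,\cL}_{0,1}}{\partial t^o\partial t^c}$; their equality is the first displayed equation. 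For the second equation one runs the same argument with two distinguished point-constrained boundary points and only two interior insertions $T_i,T_j$: the sphere-bubble stratum produces the term $\frac{\partial^3F_0^{\cX}}{\partial t^i\partial t^j\partial t^\mu}g^{\mu\nu}\frac{\partial^2F^{\cX,\cL}_{0,1}}{\partial t^\nu\partial t^o}$, the boundary-node stratum with the two boundary points on one side produces $\frac{\partial^2F^{\cX,\cL}_{0,1}}{\partial t^i\partial t^j}\frac{\partial^2F^{\cX,\cL}_{0,1}}{(\partial t^o)^2}$, and the boundary-node stratum with the two boundary points separated produces $\frac{\partial^2F^{\cX,\cL}_{0,1}}{\partial t^o\partial t^i}\frac{\partial^2F^{\cX,\cL}_{0,1}}{\partial t^o\partial t^j}$; the vanishing of the signed boundary count is the second displayed equation. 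There is no pure $F_0^{\cX}$-term on the right-hand side of the second equation because a sphere bubble cannot carry a boundary output point.

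The step I expect to be the main obstacle — and which is the real content of \cite{HS,ST19} — is not the combinatorics above but the analytic and sign-theoretic foundations that legitimize it: bordered moduli spaces genuinely have boundary and are not canonically oriented, so both the vanishing of the signed count of $\partial\cM$ and the signs in the two gluing formulas require the relative-spin orientation bookkeeping of those references (in the toric Calabi--Yau case this is provided by the torus action and the framing of $\cL$), and the open invariants are virtual, so the point constraints on $\cL$ must be realized through bounding chains — or, in the toric case, through the equivariant-residue definition of open Gromov--Witten invariants of toric Calabi--Yau threefolds — with the whole degeneration argument carried out at the level of that formalism. Granting these inputs, the two open WDVV equations follow from the boundary relations above exactly as the classical WDVV equation follows from $\Mbar_{0,4}$.
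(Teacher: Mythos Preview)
Your sketch is a reasonable outline of the degeneration/boundary-of-moduli argument that underlies the references \cite{HS,ST19}, and you correctly flag that the hard content lies in orientations, bounding chains, and the virtual machinery rather than in the combinatorics of the strata. However, the paper does not give its own proof of this theorem at all: Theorem~\ref{thm:openWDVV point} is stated in the introduction purely as a background result from the literature, attributed to Horev--Solomon and Solomon--Tukachinsky, and is used only for context and contrast. There is nothing in the paper to compare your argument against.

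It is worth noting, though, that the paper's own open WDVV--type equations (Proposition~\ref{prop:openWDVV}, in the toric Calabi--Yau setting $(X,L,f)$) are obtained by a completely different mechanism from the one you sketch: rather than analysing boundary strata of disk moduli directly, the paper passes through the open/closed correspondence to a toric Calabi--Yau 4-fold $\tX$, applies the ordinary closed WDVV equation there, and then reads off relations for $F_0^{X,T'}$ and $F_{0,1}^{X,(L,f)}$ by expanding in the extra equivariant parameters $\su_4$ and $\sv$. The resulting identities are explicitly remarked (Remark~\ref{rmk:WDVVDiff}) to \emph{differ} from those of Theorem~\ref{thm:openWDVV point}, precisely because the $\nu=m+1$ terms contribute differently. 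So even if you intended your sketch as a warm-up for the paper's results, be aware that neither the setting nor the method matches: in this paper there are no point-like boundary insertions, no bounding chains, and no direct disk-moduli degeneration --- everything is reduced to closed Gromov--Witten theory on $\tX$.
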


The open WDVV equation has also been studied in \cite{PST14, Alcolado17,BCT18,BCT19,BB19,CZ19,ABLR20}.

The natural structure that captures the open WDVV equation is that of a \emph{flat $F$-manifold} (see, for example, \cite{HM99,Manin99,Getzler04,Manin05,ABLR20}), a generalization of a Frobenius manifold.
\begin{definition}\label{def:F-mfd}
A \emph{flat complex $F$-manifold} consists of the data $(M,\nabla,\star,\one)$ where
\begin{enumerate}
\item $M$ is a complex manifold of dimension $m+1$,
\item $\nabla$ is a holomorphic connection on the tangent bundle $\cT_M$,
\item $(\cT_M \big|_p,\star)$ defines an algebra structure on each tangent space, analytically depending on the point $p\in M$,
\item $\one$ is a $\nabla$-flat vector field which is a unit for $\star$.
\end{enumerate}
The above data satisfy the condition that the connection $\nabla^z:=\nabla-\frac{1}{z}\star$ is flat and symmetric for any $z\in\bP^1$.
\end{definition}

The structure of $F$-manifolds appears in different areas of mathematics including the open Gromov-Witten theory, Painlev\'{e} transcendents, and reflection groups. Again, the associativity of $\star$ is equivalent to the flatness of $\nabla^z$ and that the commutativity of $\star$ is equivalent to the fact that $\nabla^z$ is symmetric. Moreover, if one chooses flat coordinates $t^1,\dots,t^{m+1}$ for the connection $\nabla$, then it is easy to see that locally there exist holomorphic functions $F^i(t^1,\dots,t^{m+1})$, $i = 1, \dots, m+1$, such that the second derivatives
$$
c^{i}_{jk}:=\frac{\partial^2 F^i}{\partial t^j\partial t^k}
$$
are the structure constants of the algebra $(\cT_M \big|_p,\star)$:
$$
\frac{\partial}{\partial t^j}\star\frac{\partial}{\partial t^k}=c^{i}_{jk}\frac{\partial}{\partial t^i}.
$$
Then the associativity of $\star$ is equivalent to the equation
\begin{equation}\label{eqn:F-mfd}
\frac{\partial^2 F^i}{\partial t^j\partial t^\mu}\frac{\partial^2 F^\mu}{\partial t^k\partial t^l}=\frac{\partial^2 F^i}{\partial t^k\partial t^\mu}\frac{\partial^2 F^\mu}{\partial t^j\partial t^l}
\end{equation}
for $i,j,k,l\in\{1,\dots,m+1\}$. The $(m+1)-$tuple $\overline{F}=(F^1,\dots,F^{m+1})$ is called the \emph{vector potential} for the $F$-manifold $M$. In the special case when $M$ is a Frobenius manifold with potential $F$, and assuming that $g = \Id$ for simplicity, the vector potential is given by $\overline{F}=(\frac{\partial F}{\partial t^1},\dots,\frac{\partial F}{\partial t^{m+1}})$.

In the case of the open Gromov-Witten theory of $(\cX, \cL)$ with point-like boundary insertions, one can construct a flat $F$-manifold as follows. First we can choose $\{T_i\}_{i=1}^m$ such that $g=\Id$. Let $\nabla$ be the connection under which $\frac{\partial}{\partial t^1},\dots,\frac{\partial}{\partial t^m},\frac{\partial}{\partial t^o}$ are flat. Finally, define the vector potential by
$$
    \overline{F} := (\frac{\partial F_0^{\cX}}{\partial t^1},\dots,\frac{\partial F_0^{\cX}}{\partial t^m},F^{\cX,\cL}_{0,1}).
$$
In the case when $\overline{F}$ is convergent, we obtain a complex $F$-manifold of dimension $m+1$. Equation \eqref{eqn:F-mfd} is obtained by the open and closed WDVV equations (Theorems \ref{thm:WDVV general} and \ref{thm:openWDVV point}). In general, the vector potential $\overline{F}$ is not convergent, and one can construct a \emph{formal} $F$-manifold as in the case of the quantum cohomology.

\subsubsection{Open WDVV equations for toric Calabi-Yau 3-folds}
In this paper, we study the open WDVV equation for $(X,L)$ where $X$ is a toric Calabi-Yau 3-fold and $L\subset X$ is an outer Aganagic-Vafa brane. We first obtain a collection of non-linear partial differential equations (Proposition \ref{prop:openWDVV}) which involve both the generating function $F_0^{X,T'}$ of genus-zero equivariant Gromov-Witten invariants of $X$ and the generating function $F_{0,1}^{X,(L,f)}$ of equivariant disk Gromov-Witten invariants of $(X,L)$ . We will package these equations in two different ways to obtain two versions of the open WDVV equation. The first version leads to the construction of a semi-simple formal Frobenius manifold, and the second version leads to a flat formal $F$-manifold, both exhibiting the recursive structures of the open and closed Gromov-Witten theory of $(X,L)$.

The key technique we use to derive the open WDVV equation is the \emph{open/closed correspondence} \cite{LY21,LY22} which relates the open Gromov-Witten theory of $(X,L)$ and the closed Gromov-Witten theory of a corresponding toric Calabi-Yau 4-fold $\tX$. Based on the original conjectures of Mayr \cite{Mayr01} in physics, the mathematical development of the correspondence emerges from studies of correspondences among different types (open, relative/log, local) of Gromov-Witten invariants in the literature \cite{LLLZ09,FL13,vGGR19,BBvG24,GRZ22}. Under this correspondence, we may recover both $F_{0,1}^{X,(L,f)}$ and $F_0^{X,T'}$ from the generating function $F_0^{\tX,\tT'}$ of genus-zero equivariant Gromov-Witten invariants of $\tX$. The open WDVV equation for $(X,L)$ is then a consequence of the usual WDVV equation for $\tX$. Recently, the open/closed correspondence has also been applied to study the integrality properties of Gromov-Witten invariants of $(X,L)$ and $\tX$ (in terms of BPS or Gopakumar-Vafa invariants) \cite{Yu23}. The correspondence has also been studied on the B-model side of mirror symmetry \cite{LY22} and extended to quintic 3-folds \cite{AL23}.

We now discuss our main results and techniques in more detail.

\subsection{Statement of the main results}\label{sec:main result}

Let $X$ be a smooth toric Calabi-Yau 3-fold and $T\cong (\bC^*)^3$ be the algebraic 3-torus embedded in $X$ as a dense open subset. Let $T' \cong (\bC^*)^2$ be the \emph{Calabi-Yau $2$-subtorus} of $T$ which acts trivially on the canonical bundle of $X$. Let $L \subset X$ be an \emph{Aganagic-Vafa outer brane} in $X$ which is a Lagrangian submanifold diffeomorphic to $S^1 \times \bC$. It intersects a unique $T$-invariant line $l\cong \bC$ in $X$. Moreover, $L$ is invariant under the action of the maximal compact subtorus $T_{\bR}' \cong U(1)^2$ of $T'$. We further take an integer $f$ called the \emph{framing} on the Aganagic-Vafa brane $L$, and construct a 1-dimensional subtorus $T_f\subset T'$.

Under the open/closed correspondence, the closed geometry corresponding to the open geometry $(X,L,f)$ is a smooth toric Calabi-Yau 4-fold $\tX$ that takes the form
$$
    \tX = \Tot(\cO_{X \sqcup D}(-D)),
$$
where $X \sqcup D$ is a toric partial compactification of $X$ given by adding an additional toric divisor $D$. In $X \sqcup D$, the $T$-invariant line $l \cong \bC$ that $L$ intersects is compactified by an additional $T$-fixed point into a $\bP^1$ whose normal bundle is isomorphic to $\cO_{\bP^1}(f) \oplus \cO_{\bP^1}(-f-1)$. There is an inclusion
$$
    X \to X \sqcup D \to \tX.
$$
Let $\tT \cong (\bC^*)^4$ be the algebraic 4-torus of $\tX$ and $\tT' \cong (\bC^*)^3$ be the Calabi-Yau 3-subtorus of $\tT$, which contains $T'$ as a subtorus. We take the following notations for the equivariant parameters of the tori:
\begin{align*}
    & R_{\tT'} := H^*_{T'}(\pt) = \bC[\su_1, \su_2, \su_4], && S_{\tT'}:= \bC(\su_1, \su_2, \su_4),\\
    & R_{T'} := H^*_{T'}(\pt) = \bC[\su_1, \su_2], && S_{T'}:= \bC(\su_1, \su_2),\\
    & R_{T_f} := H^*_{T_f}(\pt) = \bC[\su_1], && S_{T_f}:= \bC(\su_1).
\end{align*}

Let $p_1, \dots, p_m$ be a fixed ordering of the $T'$-fixed points of $X$ and $\tp_1, \dots, \tp_m$ denote the corresponding $\tT'$-fixed points of $\tX$. We denote the additional $\tT'$-fixed point of $\tX$ by $\tp_{m+1}$. We consider the basis $\{\phi_1,\dots,\phi_m\}$ of $H^*_{T'}(X) \otimes_{R_{T'}} S_{T'}$ defined by the fixed points as
$$
    \phi_i := \frac{[p_i]}{e_{T'}(T_{p_i}X)},
$$
which forms a canonical basis of the semi-simple Frobenius algebra
$$
    (H^*_{T'}(X) \otimes_{R_{T'}} S_{T'},\cup,(-,-)_{X,T'})
$$
where $\cup$ is the cup product and $(-,-)_{X, T'}$ is the $T'$-equivariant Poincar\'e pairing on $X$. Similarly, we define the basis $\{\tphi_1,\dots,\tphi_m,\tphi_{m+1} \}$ of $H^*_{\tT'}(\tX) \otimes_{R_{\tT'}} S_{\tT'}$ as
$$
    \tphi_i := \frac{[\tp_i]}{e_{\tT'}(T_{\tp_i}\tX)},
$$
which forms a canonical basis of the semi-simple Frobenius algebra
$$
    (H^*_{\tT'}(\tX) \otimes_{R_{\tT'}} S_{\tT'},\cup,(-,-)_{\tX,\tT'}).
$$
Let $t^1,\dots,t^m,t^{m+1}$ be the coordinates corresponding to the basis $\{\tphi_1,\dots,\tphi_m,\tphi_{m+1} \}$. Under the correspondence between $\{\tphi_1,\dots,\tphi_m\}$ and $\{\phi_1,\dots,\phi_m\}$, we also view $t^1,\dots,t^m$ as coordinates corresponding to the basis $\{\phi_1,\dots,\phi_m\}$.

We will use the above bases to define the following generating functions of Gromov-Witten invariants over suitable Novikov rings:
\begin{itemize}
    \item $F_0^{X,T'}(t^1, \dots, t^m)$ -- the generating function of genus-zero $T'$-equivariant closed Gromov-Witten invariants of $X$;

    \item $F_{0,1}^{X,(L,f)}(t^1, \dots, t^m, t^o)$ -- the generating function of genus-zero $T_f$-equivariant disk invariants of $(X,L)$ with framing $f$, depending on an additional formal variable $t^o$ for the open sector;

    \item $F_0^{\tX,\tT'}(t^1, \dots, t^m, t^{m+1})$ -- the generating function of genus-zero $\tT'$-equivariant closed Gromov-Witten invariants of $\tX$.
\end{itemize}
See Section \ref{sect:GW} for detailed definitions. In particular, we will see that the dependence of $F_{0,1}^{X,(L,f)}$ on the additional open variable $t^o$ is captured by terms of form $(e^{t^o}\sX_0)^d$, $d \in \bZ_{>0}$, where $\sX_0$ is a Novikov variable for the relative curve class. The term $\sX=e^{t^o}\sX_0$ is viewed as encoding the winding number $d$ of the disk invariants.

The open/closed correspondence (see Theorem \ref{thm:OpenClosed}) retrieves both $F_0^{X,T'}$ and $F_{0,1}^{X,(L,f)}$ from $F_0^{\tX,\tT'}$ under a suitable change of coordinates and Novikov variables. The WDVV equation for $F_0^{\tX,\tT'}$ (Theorem \ref{thm:WDVV general}) then gives rise to a collection of non-linear partial differential equations involving $F_0^{X,T'}$ and $F_{0,1}^{X,(L,f)}$ (see Proposition \ref{prop:openWDVV}). This collection recovers the WDVV equation for $F_0^{X,T'}$, and contains equations analogous to the open WDVV equation obtained by \cite{HS,ST19} (Theorem \ref{thm:openWDVV point}). As our main results, we use this collection of equations to construct a semi-simple formal Frobenius manifold and a flat formal $F$-manifold to package the structures of the open and closed Gromov-Witten theory of $(X,L)$.

\subsubsection{A formal Frobenius manifold}
The first aspect of our constructions is a formal Frobenius manifold (Section \ref{sect:FrobStr}). Consider the formal scheme
$$
    \hH_1 :=\mathrm{Spec}(\Lambda_{X,L}^{T_f}[\epsilon]\formal{t^1,\dots,t^m ,t^o})
$$
over the base ring
$$
    \Lambda_{X,L}^{T_f}[\epsilon] := \Lambda_{X,L}^{T_f} \otimes \bC[\epsilon]/\inner{\epsilon^2},
$$
where $\Lambda_{X,L}^{T_f}$ is the $T_f$-equivariant Novikov ring of $(X,L)$ and $\epsilon$ is a nilpotent variable with $\epsilon^2 = 0$. We will define a pairing $\left(\frac{\partial}{\partial t^i},\frac{\partial}{\partial t^j}\right) = h_{ij}$, $i,j = 1, \dots, m, o$, on the tangent bundle $\cT_{\hH_1}$ of $\hat{H}_1$ which is spanned by vector fields $\frac{\partial}{\partial t^1},\dots,\frac{\partial}{\partial t^m}, \frac{\partial}{\partial t^o}$. Let $(h^{ij})=(h_{ij})^{-1}$. Moreover, we define the potential function $F$ by
$$
    F:= -\frac{\su_1}{6}(t^o)^3 + F_0^{X,T'}\big|_{\su_2-f\su_1=0} + \epsilon \int F_{0,1}^{X,(L,f)},
$$
where the weight restriction $\su_2 - f\su_1 = 0$ corresponds to the inclusion $T_f \subset T'$ and the symbol $\int$ represents taking the antiderivative with respect to $t^o$. We show that $F$ satisfies the following WDVV equation.

\begin{proposition}[See Proposition \ref{prop:openWDVV1}]
For any $i,j,k,l\in\{1,\dots,m, o\}$, the following WDVV equation holds:
$$
    \frac{\partial^3 F}{\partial t^i\partial t^j\partial t^\nu}
    \cdot h^{\nu\mu}\cdot \frac{\partial^3 F}{\partial t^\mu\partial t^k\partial t^l}
    =\frac{\partial^3 F}{\partial t^j\partial t^k\partial t^\nu}
    \cdot h^{\nu\mu}\cdot \frac{\partial^3 F}{\partial t^\mu\partial t^i\partial t^l}.
$$
\end{proposition}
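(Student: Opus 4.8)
The plan is to derive this WDVV equation for $F$ on $\hH_1$ as a formal consequence of the ordinary WDVV equation for $F_0^{\tX,\tT'}$ on $\tX$ (Theorem \ref{thm:WDVV general}), passing through the open/closed correspondence (Theorem \ref{thm:OpenClosed}) and the collection of equations in Proposition \ref{prop:openWDVV}. The first step is to set up the dictionary carefully: the coordinate $t^{m+1}$ on $\tX$ must be identified, via the open/closed change of variables, with the pair $(t^o,\epsilon)$ on $\hH_1$, while $t^1,\dots,t^m$ on the two sides are identified directly. Concretely, I expect that $\epsilon$ plays the role of a first-order deformation parameter in the $t^{m+1}$-direction: one reads $F_0^{\tX,\tT'}$ as a function whose expansion keeps only the terms that survive after the relevant weight restriction $\su_2 - f\su_1 = 0$ and after specializing the Novikov variable $\sX = e^{t^o}\sX_0$, and the $\epsilon$-linear part of $F$ then records exactly the disk potential $\int F_{0,1}^{X,(L,f)}$ while the $\epsilon^0$ part records $F_0^{X,T'}$ together with the cubic term $-\frac{\su_1}{6}(t^o)^3$ coming from the three-point function at the extra fixed point $\tp_{m+1}$.

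The second step is to match the pairings. On $\tX$ the relevant metric is the $\tT'$-equivariant Poincaré pairing in the fixed-point basis $\{\tphi_i\}$; one must check that under the identifications above its restriction/degeneration is exactly the pairing $(h_{ij})$ defined on $\cT_{\hH_1}$, including the $o$-row and $o$-column, so that $(h^{ij})$ is the honest inverse of the degenerated $\tX$-metric. I expect the extra fixed point $\tp_{m+1}$ contributes the block involving $\partial/\partial t^o$, and the factor $\epsilon$ (with $\epsilon^2=0$) is what makes the otherwise-divergent open contributions land in the right ring $\Lambda_{X,L}^{T_f}[\epsilon]\formal{t^1,\dots,t^m,t^o}$. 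The third step is then purely algebraic: substitute the identifications into the $\tX$ WDVV identity $\frac{\partial^3 F_0^{\tX,\tT'}}{\partial t^i\partial t^j\partial t^\nu} g^{\nu\mu} \frac{\partial^3 F_0^{\tX,\tT'}}{\partial t^\mu\partial t^k\partial t^l} = (i\leftrightarrow k)$ for all indices in $\{1,\dots,m+1\}$, expand everything to first order in $\epsilon$, and check that the $\epsilon^0$ part reproduces the closed WDVV equation for $F_0^{X,T'}$ (with the restriction $\su_2-f\su_1=0$) while the $\epsilon^1$ part reassembles into the asserted identity for $F$ with indices ranging over $\{1,\dots,m,o\}$. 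The bookkeeping of which $\nu,\mu$ sums over $\{1,\dots,m\}$ versus picking up the $m+1$ (i.e.\ $o$) term is the place where the structure of $(h^{ij})$ and the cubic term $-\frac{\su_1}{6}(t^o)^3$ must conspire correctly.

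I expect the main obstacle to be precisely this last reconciliation: showing that the $\epsilon$-linear part of the $\tX$ WDVV equation, after the change of variables and Novikov specialization, is genuinely equivalent to the displayed equation for $F$ and not merely implied by it in one direction. This requires that the open/closed correspondence be compatible not just with the potentials but with all third derivatives and with the metric simultaneously — in particular that differentiation in $t^{m+1}$ on $\tX$ corresponds correctly to differentiation in $t^o$ combined with multiplication by the $\epsilon$-structure on $\hH_1$ — and that no cross-terms are lost when $\sX = e^{t^o}\sX_0$ is substituted (the chain rule $\partial/\partial t^o$ acting on $\sX$-powers must match $\partial/\partial t^{m+1}$ acting on the corresponding Novikov variable of $\tX$). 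Granting Theorem \ref{thm:OpenClosed} and Proposition \ref{prop:openWDVV}, which already package the needed derivative identities, the remaining work is a careful but essentially formal reorganization; the hard conceptual point — that the extra 4-fold direction degenerates to the open direction with a square-zero parameter — is exactly what those earlier results encode.
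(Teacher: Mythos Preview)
Your overall plan --- reduce to Proposition~\ref{prop:openWDVV} and then expand in $\epsilon$ --- is correct and is exactly what the paper does. But your framing of $\epsilon$ is off and leads you to over-engineer the argument. There is no identification of $t^{m+1}$ with a pair $(t^o,\epsilon)$: the change of variables is simply $t^{m+1}=-\su_1 t^o$, and $\epsilon$ is an \emph{independent} nilpotent parameter inserted by hand in front of the open part of $F$. Its sole purpose is that $\epsilon^2=0$ annihilates any product of two open pieces in the WDVV expression. Once you see this, there is nothing to ``degenerate'' and no metric matching to perform beyond what is already in the definition of $(h_{ij})$: you just write $F = -\tfrac{\su_1}{6}(t^o)^3 + F_0^{X,T'}\big|_{\su_2-f\su_1=0} + \epsilon\int F_{0,1}^{X,(L,f)}$, plug into the asserted identity, and split into $\epsilon^0$- and $\epsilon^1$-parts.

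The paper then proceeds by a direct case analysis on how many of $i,j,k,l$ equal $o$, using that $(h^{ij})$ is diagonal so the sum is over $\nu=\mu$. For $i,j,k,l\in\{1,\dots,m\}$ the $\epsilon^0$-part is identity~(Ic) (the closed WDVV for $X$, restricted) and the $\epsilon^1$-part is~(Ia); for three closed indices and $l=o$ there is no $\epsilon^0$-term and the $\epsilon^1$-part is~(IIa); for $k=l=o$ the $\epsilon^1$-part is~(IIIa), with the cubic $-\tfrac{\su_1}{6}(t^o)^3$ supplying the $\nu=\mu=o$ contribution. All remaining cases are trivial or symmetric. Note in particular that only the ``a''-identities and (Ic) are used: the ``b''-identities (Ib), (IIb), (IIIb), which involve products of two open factors, are irrelevant here precisely because $\epsilon^2=0$. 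Your proposal to go back to the $\tX$ WDVV and re-derive things is unnecessary --- Proposition~\ref{prop:openWDVV} has already extracted exactly the pieces you need.
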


In particular, the potential $F$ defines a product $\star_t$ on $\cT_{\hH_1}$ that is compatible with the metric $h$ and associative. We have the following main structural result.

\begin{theorem}[See Theorems \ref{thm:Frob}, \ref{thm:decomp}]
The tuple $(\hat{H}_1,F,(-,-))$ is a semi-simple formal Frobenius manifold over $\Lambda_{X,L}^{T_f}[\epsilon]$.
\end{theorem}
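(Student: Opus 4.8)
The plan is to check, one axiom at a time, that $(\hH_1,F,(-,-))$ fits the formal analogue of Definition \ref{def:Frobenius}, and then to deduce semi-simplicity by reducing modulo the square-zero parameter $\epsilon$ to the closed theory and lifting idempotents. First, the metric: by construction the pairing $h_{ij}$, $i,j\in\{1,\dots,m,o\}$, has entries in $R_{T_f}[\epsilon]$ and is independent of $t^1,\dots,t^m,t^o$, so its Levi-Civita connection is the trivial one, the coordinate vector fields $\frac{\partial}{\partial t^i}$ are $h$-flat, and $h$ is flat. Non-degeneracy is read off the explicit form of $h$: the block indexed by $\{1,\dots,m\}$ is the $T'$-equivariant Poincar\'e pairing of $X$ in the canonical basis $\{\phi_i\}$ specialized to $\su_2-f\su_1=0$, which is diagonal with invertible diagonal, and the remaining $t^o$-row and column (which by construction decouple from the closed block modulo $\epsilon$) contribute an invertible entry; hence $(h^{ij})$ exists over the relevant localization.

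Next, the product, associativity, and unit. Setting $A(X,Y,Z):=XYZ(F)$ and $h(X\star_t Y,Z):=A(X,Y,Z)$ defines a commutative product on $\cT_{\hH_1}$, commutativity being symmetry of the third derivatives, and \emph{associativity of $\star_t$ is exactly Proposition \ref{prop:openWDVV1}}, which we have already obtained from Proposition \ref{prop:openWDVV} and hence from the WDVV equation for $F_0^{\tX,\tT'}$ via the open/closed correspondence (Theorem \ref{thm:OpenClosed}). For the unit I would take $\one$ to be the constant vector field corresponding to the identity of the semi-simple algebra $H^*_{T'}(X)\otimes_{R_{T'}}S_{T'}$ in the closed directions together with the $t^o$-component forced by the cubic term $-\frac{\su_1}{6}(t^o)^3$; it is $h$-flat because its coefficients are constants, and it is a unit for $\star_t$ once one checks $\one^\mu\,\partial_\mu\partial_j\partial_k F=h_{jk}$ for all $j,k$, which follows from the fundamental-class (string) equation applied to $F_0^{X,T'}$ and to $F_{0,1}^{X,(L,f)}$ in the closed directions and from a direct computation on the cubic term in the $o$-direction.

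For semi-simplicity, work modulo $\epsilon$: there $F|_{\epsilon=0}=-\frac{\su_1}{6}(t^o)^3+F_0^{X,T'}\big|_{\su_2-f\su_1=0}$, in which $t^o$ is uncoupled from $t^1,\dots,t^m$, and together with the block form of $h$ this exhibits $\hH_1|_{\epsilon=0}$ as the product of the one-dimensional Frobenius manifold with potential $-\frac{\su_1}{6}(t^o)^3$ (trivially semi-simple over $S_{T_f}$, as $\frac{\partial}{\partial t^o}\star_t\frac{\partial}{\partial t^o}$ is a nonzero multiple of $\frac{\partial}{\partial t^o}$) and the $T'$-equivariant quantum cohomology Frobenius manifold of $X$ specialized to $\su_2-f\su_1=0$. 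The latter is semi-simple: its classical tangent algebra is $(H^*_{T'}(X)\otimes_{R_{T'}}S_{T'},\cup)$, which by localization is $\bigoplus_i S_{T'}\phi_i$ with $\phi_i\cup\phi_j=\delta_{ij}\phi_i$, the specialization preserves invertibility of the relevant equivariant weights, and generic semi-simplicity persists under the formal quantum deformation. (Equivalently, the change of coordinates and Novikov variables of Theorem \ref{thm:OpenClosed} identifies $\hH_1|_{\epsilon=0}$ with the semi-simple formal Frobenius manifold of $\tX$.) Finally, since $\hH_1\to\hH_1|_{\epsilon=0}$ is a square-zero thickening, every idempotent $e$ of the tangent algebra modulo $\epsilon$ lifts uniquely: $2e-1$ squares to a unit, so writing $e^2-e=\epsilon a$ the lift is $e-\epsilon a(2e-1)^{-1}$, and lifting the orthogonal idempotents above produces a canonical idempotent frame over $\Lambda^{T_f}_{X,L}[\epsilon]$ — this is the decomposition of Theorem \ref{thm:decomp}. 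Hence $(\hH_1,F,(-,-))$ is a semi-simple formal Frobenius manifold.

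I expect the two delicate points to be: (i) pinning down the unit vector field and verifying the unit axiom for the mixed open–closed potential, where the closed string equation must be combined with the explicit structure of the disk potential $F_{0,1}^{X,(L,f)}$ and of the cubic term $-\frac{\su_1}{6}(t^o)^3$; and (ii) in the semi-simplicity step, controlling genericity — ensuring that after the specialization $\su_2-f\su_1=0$ the discriminant of the quantum product and the weights $e_{T'}(T_{p_i}X)$ remain invertible in the appropriate localization of $S_{T_f}$, so that the idempotents are genuinely defined there and the lifting argument applies.
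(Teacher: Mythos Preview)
Your approach is correct and parallels the paper's: the Frobenius axioms follow from Proposition \ref{prop:openWDVV1} together with the explicit constant form of $(h_{ij})$, and semi-simplicity comes from lifting idempotents out of a simpler quotient. The difference is in which quotient. The paper reduces modulo the Novikov ideal $I=(Q,\sX_0)$, observes that the classical tangent algebra $A_1$ already has (scalar multiples of) $\partial_{t^1},\dots,\partial_{t^m},\partial_{t^o}$ as an idempotent basis, and lifts iteratively through $I^n/I^{n+1}$ via \cite[Lemma 16]{LP}. You instead reduce modulo $\epsilon$, identify $\hH_1|_{\epsilon=0}$ as the product of $\hH_X^f$ with a one-dimensional piece (this is exactly the content of Remark \ref{rmk:FrobDecomp}), and lift once through the square-zero thickening. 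The paper's route is more economical since it treats $\epsilon$ and the Novikov variables in one pass and does not need a separate appeal to semi-simplicity of the quantum cohomology of $X$; your route makes the product decomposition explicit but still needs the Novikov-filtration argument inside the closed factor. One correction: your parenthetical remark that $\hH_1|_{\epsilon=0}$ is identified with the Frobenius manifold of $\tX$ is not right --- setting $\epsilon=0$ discards the disk potential entirely, leaving $\hH_X^f$ times a line rather than $\hH_{\tX}$ --- but your main argument does not use this. Finally, the paper does not verify the unit axiom at all, so your discussion of $\one$ is more than what the paper provides.
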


\begin{remark}\label{rmk:Frob} \rm{
One way to interpret the variable $\epsilon$ is the following. Consider $\hH_1$ as a formal \emph{supermanifold} over $\Lambda_{X,L}^{T_f}$ with local coordinates $t^1,\dots,t^m,t^o,\epsilon$ where $t^1,\dots,t^m,t^o$ are even coordinates and $\epsilon$ is an odd coordinate (and hence $\epsilon^2=0$). Then the pairing $h$ and the product structure $\star_t$ may be viewed as defined on the subbundle of the tangent bundle spanned by the even vector fields $\frac{\partial}{\partial t^1},\dots,\frac{\partial}{\partial t^m}, \frac{\partial}{\partial t^o}$. The product $\star_t$ itself does not involve the odd vector field $\frac{\partial}{\partial \epsilon}$. Rather, as remarked in e.g. \cite[Section 4.1]{MM97}, $\epsilon$ is regarded as an odd structural constant pulled back from the base $\Spec (\Lambda_{X,L}^{T_f}[\epsilon])$ viewed also as a supermanifold over $\Lambda_{X,L}^{T_f}$.
}\end{remark}

\subsubsection{A flat formal $F$-manifold} The second aspect of our constructions is a flat formal $F$-manifold (Section \ref{sect:F-Str}). Consider the formal scheme
$$
    \hH_2 :=\mathrm{Spec}(\Lambda_{X,L}^{T_f}\formal{t^1,\dots,t^m ,t^o})
$$
over the base ring $\Lambda_{X,L}^{T_f}$, where as compared to $\hH_1$ above, the variable $\epsilon$ is dropped. Let $\nabla$ be the flat connection on the tangent bundle $\cT_{\hH_2}$ of $\hH_2$ under which $\frac{\partial}{\partial t^1},\dots,\frac{\partial}{\partial t^m},\frac{\partial}{\partial t^o}$ are flat. We define the vector potential $\overline{F} = (F^1, \dots, F^m, F^o)$ by
\begin{align*}
    & F^i := h^{ii} \frac{\partial}{\partial t^i} \left(F_0^{X,T'}\big|_{\su_2-f\su_1=0}+\int F_{0,1}^{X,(L,f)} \big|_{t^o = 0}\right), && i = 1, \dots, m,\\
    &F^o := F_{0,1}^{X,(L,f)} \big|_{t^o = 0}.
\end{align*}
Here $t^o$ is still viewed as the variable for the `open state space', while we should notice that each component of $\overline{F}$ is independent of $t^o$. We show that $\overline{F}$ satisfies the following open WDVV equation.

\begin{proposition}[See Proposition \ref{prop:openWDVV2}]
For any $i,j,k,l\in\{1,\dots,m, o\}$, the following open WDVV equation holds:
$$
    \frac{\partial^2 F^j}{\partial t^i\partial t^\mu}
    \cdot  \frac{\partial^2 F^\mu}{\partial t^k\partial t^l}
    =\frac{\partial^2 F^j}{\partial t^k\partial t^\mu}
    \cdot  \frac{\partial^2 F^\mu}{\partial t^i\partial t^l}.
$$
\end{proposition}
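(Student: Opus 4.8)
The plan is to deduce this open WDVV equation for the vector potential $\overline{F}$ on $\hH_2$ directly from the master collection of equations in Proposition \ref{prop:openWDVV}, which itself is the ordinary WDVV equation for $F_0^{\tX,\tT'}$ transported through the open/closed correspondence (Theorem \ref{thm:OpenClosed}). First I would recall the precise dictionary: the correspondence identifies $F_0^{\tX,\tT'}(t^1,\dots,t^m,t^{m+1})$ with an expression built from $F_0^{X,T'}|_{\su_2-f\su_1=0}$ and $F_{0,1}^{X,(L,f)}$ after the change of Novikov variables $\sX=e^{t^o}\sX_0$ and the identification of $t^{m+1}$ with (a shift of) $t^o$, together with the extra cubic term $-\tfrac{\su_1}{6}(t^o)^3$ coming from the normal-bundle contribution of the new fixed point $\tp_{m+1}$. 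Writing out the WDVV equation for $F_0^{\tX,\tT'}$ in the flat basis $\{\tphi_1,\dots,\tphi_{m+1}\}$ with metric $(\tphi_i,\tphi_j)_{\tX,\tT'}$ and specializing the equivariant parameters via $\su_2-f\su_1=0$ (this is legitimate because the semi-simple pairing on $X$ is regular along that locus), one obtains Proposition \ref{prop:openWDVV}. I would then isolate, among those equations, the subfamily in which one of the four indices $i,j,k,l$ equals $m+1$ (equivalently $o$) and read it as a statement purely about $F_0^{X,T'}$ and $F_{0,1}^{X,(L,f)}$.

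Next I would perform the key algebraic manipulation that turns the ``$F$-manifold shape'' out of the ``Frobenius shape''. The identity in Proposition \ref{prop:openWDVV1} has the Frobenius form $\partial_i\partial_j\partial_\nu F\, h^{\nu\mu}\,\partial_\mu\partial_k\partial_l F = (i\leftrightarrow k)$ for the single scalar potential $F$ on $\hH_1$; because $\epsilon^2=0$, the $\epsilon^0$-part of that identity is exactly the closed WDVV equation for $F_0^{X,T'}|_{\su_2-f\su_1=0}$ (plus the trivial cubic term), while the $\epsilon^1$-part couples one factor of $\int F_{0,1}^{X,(L,f)}$ with one factor of the closed potential. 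The plan is to define $F^i$ and $F^o$ as in the statement — essentially $F^i = h^{ii}\partial_i(\text{closed potential} + \int F_{0,1})|_{t^o=0}$ and $F^o = F_{0,1}|_{t^o=0}$ — and then check that $\partial^2 F^j/\partial t^i\partial t^\mu$ are precisely the structure constants $c^j_{i\mu}$ of the product $\star_t$ restricted to the $t^o=0$ locus. Concretely: for $j\le m$, $\partial_i\partial_\mu F^j = h^{jj}\partial_j\partial_i\partial_\mu(\text{closed}+\int F_{0,1})$, which lowers/raises indices against $h$ correctly because $h$ is the semisimple pairing (diagonal in the $\phi$-basis), and for $j=o$, $\partial_i\partial_\mu F^o=\partial_i\partial_\mu F_{0,1}|_{t^o=0}$. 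Substituting these into $\partial^2 F^j/\partial t^i\partial t^\mu\cdot\partial^2 F^\mu/\partial t^k\partial t^l$ and summing over $\mu\in\{1,\dots,m,o\}$, the closed--closed piece is the WDVV equation for $F_0^{X,T'}$, the mixed pieces are exactly the ``open'' equations of Proposition \ref{prop:openWDVV} with an index set to $o$, and the one remaining piece involving $(\partial_o)^2 F_{0,1}$ and $\partial_i\partial_j F_{0,1}$ matches the second equation of Theorem \ref{thm:openWDVV point}; assembling them gives $\partial^2F^j/\partial t^i\partial t^\mu\cdot\partial^2F^\mu/\partial t^k\partial t^l$ symmetric in $i\leftrightarrow k$.

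Equivalently and more cleanly, I would argue structurally: Proposition \ref{prop:openWDVV1} says $(\cT_{\hH_1},\star_t,h)$ is Frobenius; restricting to $\epsilon$-degree at most one and to $t^o=0$, the product $\star_t$ descends to a commutative associative product on $\cT_{\hH_2}$ whose structure constants in the flat basis are exactly $\partial_i\partial_\mu F^j$ for the $\overline{F}$ above (one verifies $g(\partial_i\star_t\partial_\mu,\partial_j)=\partial_i\partial_\mu\partial_j F \Rightarrow \partial_i\star_t\partial_\mu = \sum_j h^{jj}\partial_i\partial_\mu\partial_j F\,\partial_j = \sum_j \partial_i\partial_\mu F^j\,\partial_j$, using that $\partial_j F^o$-type components reproduce the $t^o$-row). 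Since associativity of a product is equivalent to equation \eqref{eqn:F-mfd} for any vector potential whose second derivatives are its structure constants — as the excerpt records in Section \ref{sec:F-mfd} — the desired identity follows immediately from associativity of $\star_t$, which is already established.

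The main obstacle I expect is bookkeeping around the $t^o$-dependence and the role of $\int$ and of $\epsilon$: one must check that $F^i$ and $F^o$ as defined really are $t^o$-independent (the $(e^{t^o}\sX_0)^d$ structure of $F_{0,1}$ means $\partial_o$ acts as multiplication by winding number, so $\int F_{0,1}|_{t^o=0}$ and $F_{0,1}|_{t^o=0}$ are honest power series in the $t^i$ and the Novikov variable $\sX_0$), and that passing from the $\epsilon$-graded Frobenius structure on $\hH_1$ to the ungraded $F$-structure on $\hH_2$ loses no information needed for the $i\leftrightarrow k$ symmetry — in particular that the $\epsilon^2=0$ truncation does not secretly drop a term of \eqref{eqn:F-mfd} that involves two disk-potential factors. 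Tracking the index $o$ versus $m+1$ consistently through the coordinate change $t^{m+1}\mapsto t^o$, and making sure the metric restriction $h^{ij}$ versus $(\tphi_i,\tphi_j)_{\tX,\tT'}$ is compatible with the weight specialization $\su_2-f\su_1=0$, is the part that requires care; the algebra, once the dictionary is fixed, is routine and parallels the derivation in \cite{HS,ST19}.
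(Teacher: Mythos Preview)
Your ``structural'' route through $\hH_1$ has a genuine gap. The worry you flag at the end --- that the $\epsilon^2=0$ truncation might drop a term of \eqref{eqn:F-mfd} involving two disk-potential factors --- is fatal, not merely bookkeeping. In $\hH_2$ the vector potential has $\int F_{0,1}$ appearing \emph{without} an $\epsilon$, so for $j,\mu\in\{1,\dots,m\}$ the product $\partial_i\partial_\mu F^j\cdot\partial_k\partial_l F^\mu$ contains genuine (open)$\times$(open) terms. These are precisely governed by identities (Ib) and (IIb) of Proposition~\ref{prop:openWDVV}, and those identities are \emph{not} encoded in the associativity of $\star_t$ on $\hH_1$ (Proposition~\ref{prop:openWDVV1} uses only (Ia), (Ic), (IIa), (IIIa)). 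So you cannot simply restrict the Frobenius structure of $\hH_1$ to obtain the $F$-manifold structure of $\hH_2$.

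Your direct route is closer but also misfires. You say the remaining piece ``involving $(\partial_o)^2 F_{0,1}$ and $\partial_i\partial_j F_{0,1}$ matches the second equation of Theorem~\ref{thm:openWDVV point}.'' There is no such piece: every component of $\overline{F}$ is defined at $t^o=0$ and is $t^o$-independent, so any factor with a $\partial_o$ vanishes. In particular both sides of the equation are zero whenever one of $i,k,l$ equals $o$, and the $\mu=o$ summand is always zero. (Theorem~\ref{thm:openWDVV point} is the Horev--Solomon/Solomon--Tukachinsky equation for a different setting and, as Remark~\ref{rmk:WDVVDiff} notes, does \emph{not} coincide with the identities here.) What remains is $i,k,l\in\{1,\dots,m\}$ with the sum over $\mu\in\{1,\dots,m\}$; then for $j\in\{1,\dots,m\}$ the (closed)$\times$(closed), (closed)$\times$(open)$+$(open)$\times$(closed), and (open)$\times$(open) pieces are exactly (Ic), (Ia), and (Ib) respectively, and for $j=o$ one gets (IIa) and (IIb). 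This is the paper's argument; the point you are missing is that the $t^o$-independence is the organizing principle, not a side check, and that (Ib), (IIb) must be invoked directly.
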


In particular, the vector potential $\overline{F}$ defines a product structure $\star_t$ on $\cT_{\hH_2}$ that is associative. Analyzing the structural constants and using that $\overline{F}$ is independent of $t^o$, we show that $\star_t$ does not admit an identity field and $\frac{\partial}{\partial t^o}$ is nilpotent. We arrive at the following main structural result.

\begin{theorem}[See Theorem \ref{thm:F}]
The tuple $(\hH_2, \nabla, \star_t)$ is a flat formal $F$-manifold without unit over $\Lambda_{X,L}^{T_f}$ in which the $t^o$-direction is nilpotent.
\end{theorem}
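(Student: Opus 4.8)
The plan is to verify the three defining conditions of a formal $F$-manifold (Definition \ref{def:F-mfd}) for the tuple $(\hH_2, \nabla, \star_t)$, together with the two additional structural claims (no unit, $t^o$-direction nilpotent), and to deduce everything from the open WDVV equation of Proposition \ref{prop:openWDVV2}. Recall that for a vector potential $\overline F = (F^1,\dots,F^m,F^o)$ the product on $\cT_{\hH_2}$ is defined by $\frac{\partial}{\partial t^j}\star_t\frac{\partial}{\partial t^k} = c^i_{jk}\frac{\partial}{\partial t^i}$ with $c^i_{jk} := \frac{\partial^2 F^i}{\partial t^j\partial t^k}$ (indices ranging over $\{1,\dots,m,o\}$). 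First I would record the elementary facts: $\star_t$ is manifestly commutative since $c^i_{jk}$ is symmetric in $j,k$ as a second partial derivative; and the structure constants are functions on $\hH_2$, so $(\cT_{p}\hH_2,\star_t)$ is a commutative algebra depending algebraically on $p$. Commutativity gives symmetry of $\nabla^z = \nabla - \frac1z\star_t$ exactly as explained after Definition \ref{def:F-mfd}.

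Next I would establish associativity, which is the content of the $F$-manifold axiom and follows directly from Proposition \ref{prop:openWDVV2}. Associativity of $\star_t$ is equivalent to $c^\mu_{jk}c^i_{\mu l} = c^\mu_{kl}c^i_{\mu j}$ for all $i,j,k,l$, i.e.\ to
$$
\frac{\partial^2 F^i}{\partial t^j\partial t^\mu}\frac{\partial^2 F^\mu}{\partial t^k\partial t^l}
= \frac{\partial^2 F^i}{\partial t^k\partial t^\mu}\frac{\partial^2 F^\mu}{\partial t^j\partial t^l},
$$
which is precisely the equation in Proposition \ref{prop:openWDVV2} (after relabeling $j\leftrightarrow i$). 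By the discussion following Definition \ref{def:F-mfd}, associativity of $\star_t$ is equivalent to flatness of $\nabla^z$ for all $z\in\bP^1$; since $\nabla$ itself is the flat coordinate connection on $\hH_2$ and $\frac{\partial}{\partial t^1},\dots,\frac{\partial}{\partial t^m},\frac{\partial}{\partial t^o}$ are $\nabla$-flat, the only content is precisely the integrability condition furnished by associativity, so $(\hH_2,\nabla,\star_t)$ satisfies the $F$-manifold condition. At this point one has a ``formal $F$-manifold without claim of unit'', and it remains to analyze the unit and the $t^o$-direction.

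The heart of the remaining argument is the structural analysis of the constants $c^i_{jk}$ using the crucial feature that every component $F^i$ (and $F^o$) is independent of $t^o$: differentiating in $t^o$ kills any of them. Concretely, $c^i_{jo} = \frac{\partial^2 F^i}{\partial t^j\partial t^o} = \frac{\partial}{\partial t^j}\left(\frac{\partial F^i}{\partial t^o}\right) = 0$ for every $i,j$. Hence $\frac{\partial}{\partial t^o}\star_t\frac{\partial}{\partial t^k} = \sum_i c^i_{ko}\frac{\partial}{\partial t^i} = 0$ for all $k$ (by commutativity $c^i_{ko}=c^i_{ok}=0$); in particular $\frac{\partial}{\partial t^o}\star_t\frac{\partial}{\partial t^o} = 0$, so the $t^o$-direction is square-zero, i.e.\ nilpotent. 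This also shows $\star_t$ cannot have a unit: if $e = \sum_k a^k\frac{\partial}{\partial t^k} + a^o\frac{\partial}{\partial t^o}$ were an identity, then $e\star_t\frac{\partial}{\partial t^o} = \frac{\partial}{\partial t^o}$ would be required, but $e\star_t\frac{\partial}{\partial t^o} = \sum_k a^k\big(\frac{\partial}{\partial t^k}\star_t\frac{\partial}{\partial t^o}\big) + a^o\big(\frac{\partial}{\partial t^o}\star_t\frac{\partial}{\partial t^o}\big) = 0$ since each summand vanishes by the computation above, a contradiction. (One should double-check that $\frac{\partial}{\partial t^o}\neq 0$ in $\cT_{\hH_2}$, which is clear since $t^o$ is one of the formal coordinates.) Therefore $\star_t$ admits no unit field, and in particular $\one$ from Definition \ref{def:F-mfd}(4) does not exist—so one records the result as a formal $F$-manifold \emph{without unit}, with the $t^o$-direction nilpotent, matching the statement of Theorem \ref{thm:F}.

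The main obstacle I anticipate is not in the formal manipulations above, which are essentially automatic once Proposition \ref{prop:openWDVV2} is granted, but rather in the bookkeeping of the formal/Novikov setting: one must check that all the objects ($c^i_{jk}$, the product $\star_t$, the connection $\nabla^z$) are well-defined over the ring $\Lambda_{X,L}^{T_f}\formal{t^1,\dots,t^m,t^o}$ and its appropriate localizations—in particular that $h^{ii}$ appearing in the definition of $F^i$ for $i=1,\dots,m$ makes sense (the pairing $h_{ij}$ comes from the semi-simple Frobenius algebra, so $h^{ii}$ is a unit in the relevant localization of $R_{T'}$ or its Novikov extension), and that differentiation and multiplication of formal power series behave as expected so that flatness of $\nabla^z$ is genuinely equivalent to the finite system of WDVV identities. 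A secondary subtlety is to phrase the notion of ``formal $F$-manifold'' (as opposed to the complex-analytic Definition \ref{def:F-mfd}) precisely, replacing the complex manifold $M$ by the formal scheme $\hH_2$ and holomorphic objects by their formal counterparts, exactly as done for formal Frobenius manifolds following \cite{LP}; once that dictionary is fixed, the verification is as sketched.
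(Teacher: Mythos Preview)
Your proposal is correct and follows essentially the same approach as the paper: associativity is deduced from Proposition \ref{prop:openWDVV2}, and the nilpotency of $\frac{\partial}{\partial t^o}$ together with the nonexistence of a unit follow from the observation that every component of $\overline{F}$ is independent of $t^o$, so that $\frac{\partial}{\partial t^i}\star_t\frac{\partial}{\partial t^o}=0$ for all $i$. Your write-up is somewhat more explicit than the paper's (e.g.\ the contradiction argument for the unit and the remarks on the formal setting), but the underlying ideas are the same.
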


$F$-cohomological field theories without unit have been studied in \cite{ABLR20,BG23}.

\begin{remark}\label{rmk:F} \rm{
The situation here is in a sense opposite to that in Remark \ref{rmk:Frob}: in the Frobenius manifold $\hH_1$, the variable $\epsilon$ appears in the potential $F$ while $\frac{\partial}{\partial \epsilon}$ is not involved in the product $\star_t$; in the $F$-manifold $\hH_2$, the variable $t^o$ does not appear in $\overline{F}$ while $\frac{\partial}{\partial t^o}$ is involved in $\star_t$. From a geometric point of view, we may view the open variable $t^o$ as parameterizing a divisor-like insertion arising from the open sector. In $\hH_1$, it contributes to the factor $e^{t^o}\sX_0$ appearing in the $F_{0,1}^{X,(L,f)}$-part of $F$ via the ``open divisor equation''. On the other hand, in $\hH_2$, the vector potential $\overline{F}$ defined by the restriction $t^o = 0$ has no boundary insertions and does not depend on $t^o$.
}\end{remark}

Despite the above differences, we will see that both structures $\hH_1$ and $\hH_2$ can be viewed as extensions of the formal Frobenius manifold determined by $F_0^{X,T'}$; see Remarks \ref{rmk:FrobDecomp}, \ref{rmk:FExtension}.

\subsection{Future works}
\subsubsection{Recursion for open Gromov-Witten invariants of toric Calabi-Yau 3-folds}
In \cite{KM94}, Kontsevich and Manin proved that closed Gromov-Witten invariants can be recursively computed
from an initial set of known values. In particular, when $X$ is Fano, this initial set of values is finite. A typical example is the \emph{recursive formula} that calculates the Gromov-Witten invariants of $\bP^2$. This theorem is proved via the WDVV equations for closed Gromov-Witten invariants.

In the study of open Gromov-Witten invariants, similar recursive formulas can be obtained via open WDVV equations. In many cases \cite{CZ19, GZ17, HKSSS23, HS, HT24, ST19}, open Gromov-Witten invariants have been shown to be computable from a finite initial
set of values. Later in \cite{BT24}, a more general recursive formula is obtained based on a formal object called the Frobenius superpotential.

In our case, the open WDVV equations can be used to prove a recursive formula for open Gromov-Witten invariants of toric Calabi-Yau 3-folds. Since we study equivariant Gromov-Witten theory and our target spaces are Calabi-Yau, this recursive formula is more subtle and contains richer structures.

\subsubsection{Open-closed map and variation of Hodge structures}
In his ICM address, Kontsevich conjectured the homological mirror symmetry and moreover conjectured that this homological mirror symmetry implies enumerative mirror symmetry. Ganatra-Perutz-Sheridan \cite{GPS15} show that for certain Calabi-Yaus, whose variations of Hodge structures are of Hodge-Tate type, the genus-zero Gromov-Witten invariants are indeed extractable from the Fukaya category. The strategy is to show that the open-closed map \cite{FOOO09,Ganatra23} respects the variation of Hodge structures.

In \cite{H22}, the open-closed map is extended to a map from the relative cyclic homology to the \emph{relative quantum homology} whose definition is based on the open WDVV equations. In our case of toric Calabi-Yau 3-folds, the open WDVV equations studied in this paper may be used to construct relative quantum cohomology, which would further enable a study of the relative open-closed map and its compatibility with variations of Hodge structures.

\subsection{Outline of the paper}
In Section \ref{sect:Geometry}, we review the open geometry of $(X,L)$ and the corresponding closed geometry of $\tX$. We will also study the equivariant cohomology of $X$ and $\tX$. In Section \ref{sect:GW}, we give the basic definitions of open and closed Gromov-Witten invariants for $X$ and $\tX$, and then state the open/closed correspondence in Section \ref{sect:OpenClosed}. In Section \ref{sect:closedFrob}, we review the WDVV equation in closed Gromov-Witten theory and  use the specialization to $\tX$ to prove non-linear partial differential equations which involve $F_0^{X,T'}$ and $F_{0,1}^{X,(L,f)}$. Finally, in Section \ref{sect:openFrob}, we use these equations to establish the main results of the paper on the formal Frobenius and $F$-manifold structures.


\subsection*{Acknowledgments}
The authors wish to thank Alexandr Buryak, Bohan Fang, Sheel Ganatra, Chiu-Chu Melissa Liu, Jake Solomon, Junwu Tu, Ke Zhang, and Yang Zhou for useful discussions and constructive feedback. The authors also wish to thank the hospitality of the Simons Center for Geometry and Physics during the 2023 Simons Math Summer Workshop where part of this work was completed. The work of the second named author is partially supported by NSFC grant No. 11701315.


\section{Geometric setup}\label{sect:Geometry}
In this section, we review the geometry of toric Calabi-Yau 3-folds and Aganagic-Vafa branes. We then review the geometry of the corresponding toric Calabi-Yau 4-folds. We refer to \cite{FL13,FLT22,LY21,LY22} for additional details. We work over $\bC$.

\subsection{Notations for toric geometry}
In this paper, we use the following notations for an $r$-dimensional smooth toric variety $Z$ defined by a fan $\Xi$ in $\bR^r$. The algebraic torus of $Z$ is isomorphic to $(\bC^*)^r$.

\begin{itemize}
    \item For $d = 0, \dots, r$, let $\Xi(d)$ denote the set of $d$-dimensional cones in $\Xi$. For a cone $\sigma \in \Xi(d)$, let $V(\sigma) \subseteq Z$ denote the $(\bC^*)^r$-orbit closure corresponding to $\sigma$, which is a codimension-$d$ closed subvariety of $Z$.
    
    \item For a maximal cone $\sigma \in \Xi(r)$, let $p_\sigma := V(\sigma)$ denote the corresponding $(\bC^*)^r$-fixed point.
    
    \item For a cone $\tau \in \Xi(r-1)$, let $l_\tau := V(\tau)$ denote the corresponding $(\bC^*)^r$-invariant line, which is isomorphic to either $\bC$ or $\bP^1$. We set $\Xi(r-1)_c := \{\tau \in \Xi(r-1): l_\tau \cong \bP^1\}$.
    
    \item Let $F(\Sigma) := \{(\tau, \sigma) \in \Xi(r-1) \times \Xi(r) : \tau \subset \sigma\}$ denote the set of flags in $\Xi$.
\end{itemize}

\subsection{Open geometry}\label{sect:OpenGeometry}
Let $N \cong \bZ^3$ be a lattice and $M := \Hom(N, \bZ)$ be the dual lattice. Let $X$ be a smooth toric Calabi-Yau 3-fold specified by a finite fan $\Sigma$ in $N_{\bR} := N \otimes \bR \cong \bR^3$. We assume that $\Sigma(3)$ is non-empty and every cone in $\Sigma$ is a face of some 3-cone.

Let $R := |\Sigma(1)|$. Let $\Sigma(1) = \{\rho_1, \dots, \rho_R\}$ be a listing of the rays in $\Sigma$, and for each $i = 1, \dots, R$ let $b_i \in N$ be the primitive generator of $\rho_i$. The Calabi-Yau condition on $X$ is equivalent to the existence of $u_3 \in M$ such that $\inner{u_3, b_i} = 1$ for all $i$, where $\inner{-,-}$ is the natural pairing between $M$ and $N$. Let $N' := \ker(u_3: N \to \bZ) \cong \bZ^2$.

Let $P$ be the cross-section of the support $|\Sigma|$ of $\Sigma$ in the hyperplane
\begin{equation}\label{eqn:Hyperplane}
    \{v \in N_{\bR} : \inner{u_3, v} = 1\} \cong N' \otimes \bR \cong \bR^2,
\end{equation}
which is a 2-dimensional lattice polytope with a triangulation induced by $\Sigma$. We assume that $P$ is simple. As in the setup of \cite[Section 2.2]{LY21}, we do not assume that $P$ is convex or equivalently $X$ is semi-projective. There is a toric partial compactification $X \subseteq X'$ by a semi-projective smooth toric Calabi-Yau 3-fold $X'$ determined by a fan $\Sigma'$ which contains $\Sigma$ as a subfan and satisfies $\Sigma'(1) = \Sigma(1)$. The cross-section of $\Sigma'$ with the hyperplane \eqref{eqn:Hyperplane} is the convex hull $P'$ of $P$, and we have $P' \cap N = P \cap N = \{b_1, \dots, b_R\}$.

Let $T := N \otimes \bC^* \cong (\bC^*)^3$ be the algebraic torus of $X$, whose character lattice is $\Hom(T, \bC^*) \cong M$. We consider a 2-subtorus $T' := \ker(u_3: T \to \bC^*) = N' \otimes \bC^* \cong (\bC^*)^2$. The fixed points and invariant lines of $X$ under the $T'$-action are the same as those under the $T$-action.

Let $L \subset X$ be an \emph{Aganagic-Vafa brane} in $X$, which is a Lagrangian submanifold diffeomorphic to $S^1 \times \bC$. We refer to \cite[Section 2.4]{FL13}, \cite[Section 2.2]{LY21} for detailed definitions. The brane $L$ is invariant under the action of the maximal compact subtorus $T_{\bR}' \cong U(1)^2$ of $T'$. Moreover, it intersects a unique $T$-invariant line $l_{\tau_0}$ in $X$, where $\tau_0 \in \Sigma(2)$. Given a semi-projective toric partial compactification $X'$ of $X$ as above, $L$ can be viewed as an Aganagic-Vafa brane in $X'$, intersecting the $T$-invariant line in $X'$ corresponding to $\tau_0 \in \Sigma'(2)$. As in \cite[Assumption 2.3]{LY21}, we make the following assumption on $L$.

\begin{assumption}\label{assump:L} \rm{
We assume that $L$ is an \emph{outer} brane in the partial compactification $X'$, that is, $\tau_0 \in \Sigma'(2) \setminus \Sigma'(2)_c$.
}
\end{assumption}

Note that this assumption does not depend on the choice of $X'$. In particular, $\tau_0 \in \Sigma(2) \setminus \Sigma(2)_c$ and $L$ is also an outer brane in $X$. Let $\sigma_0 \in \Sigma(3)$ be the unique 3-cone containing $\tau_0$ as a face.

For any cone $\sigma$ in $\Sigma$, we set
$$
    I'_\sigma := \{i \in \{1, \dots, R\} : \rho_i \subseteq \sigma\}, \qquad I_\sigma := \{1, \dots, R\} \setminus I'_\sigma.
$$
We assume without loss of generality that
$$
    I'_{\tau_0} = \{2, 3\}, \qquad I'_{\sigma_0} = \{1, 2, 3\}
$$
with $b_1, b_2, b_3$ appearing in $P$ in a counterclockwise order. Such labeling determines a unique way to complete $u_3$ into a $\bZ$-basis $\{u_1, u_2, u_3\}$ of $M$ such that under the dual $\bZ$-basis $\{v_1, v_2, v_3\}$ of $N$, we have the coordinates
$$
    b_1 = (1, 0, 1), \qquad b_2 = (0, 1, 1), \qquad b_3 = (0, 0, 1).
$$
For $i = 1, \dots, R$, we write $(m_i, n_i, 1)$ for the coordinate of $b_i \in N$ under the basis $\{v_1, v_2, v_3\}$. Assumption \ref{assump:L} implies that $m_i \ge 0$ for all $i$.

Finally, let $f \in \bZ$ be a \emph{framing} on the Aganagic-Vafa brane $L$. This determines a 1-subtorus $T_f := \ker(u_2-fu_1: T' \to \bC^*) \subset T' \subset T$. We take the following notations for the equivariant parameters of the tori:
\begin{align*}
    & R_T := H^*_T(\pt) = \bC[\su_1, \su_2, \su_3], && S_T:= \bC(\su_1, \su_2, \su_3),\\
    & R_{T'} := H^*_{T'}(\pt) = \bC[\su_1, \su_2], && S_{T'}:= \bC(\su_1, \su_2),\\
    & R_{T_f} := H^*_{T_f}(\pt) = \bC[\su_1], && S_{T_f}:= \bC(\su_1).
\end{align*}

\begin{assumption}\label{assump:f} \rm{
We assume that $f \in \bZ$ is generic with respect to $X$, i.e. avoiding a finite subset of $\bZ$ depending on $X$.\footnote{We note in advance that this assumption is needed to ensure that the $T'$-equivariant Poincar\'e pairing and genus-zero Gromov-Witten potential of $X$ have well-defined weight restrictions to $\su_2 - f\su_1 = 0$, to be used in Section \ref{sect:recursive} onwards. This assumption is not required for the open/closed correspondence \cite{LY22} (Theorem \ref{thm:OpenClosed}) and is not the counterpart of \cite[Assumption 3.3]{LY21}.} 
}
\end{assumption}

\subsection{Closed geometry}\label{sec:closed}
Under the open/closed correspondence \cite{Mayr01,LY21,LY22}, the closed geometry corresponding to the open geometry $(X,L,f)$ is a smooth toric Calabi-Yau 4-fold $\tX$ that takes the form
$$
    \tX = \Tot(\cO_{X \sqcup D}(-D)),
$$
where $X \sqcup D$ is a toric partial compactification of $X$ given by adding an additional toric divisor $D$ corresponding to the ray generated by $(-1, -f, 0) \in N$.\footnote{This is the construction in \cite{LY21} and is sufficient for the purpose of this paper. In \cite{LY22}, assuming that $X$ is semi-projective, the corresponding toric 4-fold can be further taken to be a semi-projective partial compactification of $\tX$ which may be an orbifold.} In $X \sqcup D$, the $T$-invariant line $l_{\tau_0} \cong \bC$ that $L$ intersects is compactified by an additional $T$-fixed point into a $\bP^1$ whose normal bundle is isomorphic to $\cO_{\bP^1}(f) \oplus \cO_{\bP^1}(-f-1)$. There is an inclusion
$$
    \iota: X \to X \sqcup D \to \tX.
$$

Let $\tN := N \oplus \bZ \cong \bZ^4$ and $\tT := \tN \otimes \bC^* \cong (\bC^*)^4$. We view $N$ as a sublattice of $\tN$ and let $v_4$ be a generator of the additional $\bZ$-component. The toric geometry of $\tX$ can be described by a fan $\tSi \in \tN_{\bR} := \tN \otimes \bR \cong \bR^4$ as follows. The rays of $\tSi$ are given by
$$
    \tSi(1) = \{\trho_1, \dots, \trho_R, \trho_{R+1}, \trho_{R+2} \}
$$
where under the basis $\{v_1, \dots, v_4\}$ of $\tN$, the primitive generators $\tb_i \in \tN$ of the rays $\trho_i$, $i =1, \dots, R+2$, have the following coordinates:
$$
    \tb_i = (b_i, 0) = (m_i, n_i, 1, 0) \qquad \text{for $i = 1, \dots, R$},
$$
$$
    \tb_{R+1} = (-1, -f, 1, 1), \qquad \tb_{R+2} = (0, 0, 1, 1).
$$
In  $\tX = \Tot(\cO_{X \sqcup D}(-D))$, the toric divisor $V(\trho_{R+1})$ is the restriction of the line bundle $\cO_{X \sqcup D}(-D)$ to $D$ and $V(\trho_{R+2}) = X \sqcup D$ is the zero section.

We describe cones $\tsi$ in $\tSi$ by the index sets
$$
    I'_{\tsi} := \{i \in \{1, \dots, R+2\} : \trho_i \subseteq \tsi\}, \qquad I_{\tsi} := \{1, \dots, R+2\} \setminus I'_{\tsi}.
$$
First, $\tSi$ contains $\Sigma$ as a subfan. Any cone $\sigma \in \Sigma(d)$, $d = 0, \dots, 3$, can be viewed as a cone in $\tSi(d)$ with $I'_{\sigma}$ preserved, and there is a cone $\iota(\sigma) \in \tSi(d+1)$ given by
$$
    I'_{\iota(\sigma)} = I'_{\sigma} \sqcup \{R+2\}.
$$
This induces an injective map $\iota: \Sigma(d) \to \tSi(d+1)$.\footnote{We will abuse notations and use `$\iota$' to denote various inclusions maps.} For maximal cones in $\tSi$, we have
$$
    \tSi(4) = \iota(\Sigma(3)) \sqcup \{\tsi_0\},
$$
where the additional cone $\tsi_0$ is characterized by
$$
    I'_{\tsi_0} = \{2, 3, R+1, R+2\}.
$$
Note that $\tsi_0$ is the only $4$-cone that contains the ray $\trho_{R+1}$. Moreover, the map $\iota: \Sigma(2) \to \tSi(3)$ restricts to an injective map $\iota: \Sigma(2)_c \to \tSi(3)_c$, and we have
$$
    \tSi(3)_c = \iota(\Sigma(2)_c) \sqcup \{\iota(\tau_0)\}.
$$
Indeed, the $\tT$-invariant line $l_{\iota(\tau_0)} \cong \bP^1$ is the compactification of $l_{\tau_0} \cong \bC \subset X$ described at the beginning of this subsection.

Let $\tM := \Hom(\tN, \bZ)$, which is the character lattice of the 4-torus $\tT$, and $\{u_1, \dots, u_4\}$ be the basis of $\tM$ dual to the basis $\{v_1, \dots, v_4\}$ of $\tN$. Here we abuse notations since $u_1, u_2, u_3 \in \tM$ are natural lifts of the corresponding elements of $M$ defined before under the projection $\tM \to M$. We consider a 3-subtorus $\tT':= \ker (u_3: \tT \to \bC^*) \cong (\bC^*)^3$ of $\tT$, which contains $T'$ and $T_f$ as subtori. The fixed points and invariant lines of $\tX$ under the $\tT'$-action are the same as those under the $\tT$-action. We introduce the following notations:
\begin{align*}
    & R_{\tT} := H^*_T(\pt) = \bC[\su_1, \su_2, \su_3, \su_4], && S_{\tT}:= \bC(\su_1, \su_2, \su_3, \su_4),\\
    & R_{\tT'} := H^*_{T'}(\pt) = \bC[\su_1, \su_2, \su_4], && S_{\tT'}:= \bC(\su_1, \su_2, \su_4).
\end{align*}

\subsection{Second homology and effective curve classes}\label{sect:CurveClass}
The intersection of $L$ with $l_{\tau_0} \cong \bC$ in $X$ is isomorphic to $S^1$ and bounds a holomorphic disk $B$ in $l_{\tau_0}$, oriented by the holomorphic structure of $X$. The disk $B$ represents a class $[B]$ in $H_2(X, L; \bZ)$, and its boundary $\partial B = L \cap l_{\tau_0}$ generates $H_1(L; \bZ) \cong \bZ[\partial B]$. We have a splitting
$$
    H_2(X, L; \bZ) \cong H_2(X; \bZ) \oplus \bZ [B].
$$

We introduce the following notations for the semigroups of effective classes:
\begin{equation}\label{eqn:EffClasses}
    \begin{aligned}
        & E(X) := \NE(X) \cap H_2(X; \bZ),\\
        & E(X, L) := E(X) \oplus \bZ_{\ge 0} [B] \subset H_2(X, L; \bZ),\\
        & E(\tX) := \NE(\tX) \cap H_2(\tX; \bZ).
    \end{aligned}
\end{equation}
The inclusion $\iota: X \to \tX$ induces an isomorphism
$$
    \iota_*: H_2(X, L; \bZ) \to H_2(\tX; \bZ), \qquad \beta + d[B] \mapsto \iota_*(\beta) + d[l_{\iota(\tau_0)}]
$$
which restricts to a semigroup isomorphism
$$
    \iota_*: E(X,L) \cong E(\tX). 
$$
We will thus use the coordinates $(\beta, d) \in E(X) \oplus \bZ_{\ge 0}$ for both semigroups above. The pairing between $\tbeta = (\beta, d) \in E(\tX)$ and the divisor class $[V(\trho_{R+1})]$ is
$$
    \tbeta \cdot [V(\trho_{R+1})] = d.
$$

\subsection{Flags and tangent weights at torus-fixed points}
For a flag $(\tau, \sigma) \in F(\Sigma)$, let
$$
    \bw(\tau, \sigma) := c_1^{T'}(T_{p_\sigma}l_{\tau}) \in H^2_{T'}(\pt; \bZ)
$$
be the weight of the $T'$-action on tangent space $T_{p_\sigma}l_{\tau}$ of $l_{\tau}$ at $p_{\sigma}$. Similarly, for a flag $(\ttau, \tsi) \in F(\tSi)$, let
$$
    \tbw(\ttau, \tsi) := c_1^{\tT'}(T_{p_{\tsi}}l_{\ttau}) \in H^2_{\tT'}(\pt; \bZ).
$$

The maps $\iota: \Sigma(d) \to \tSi(d+1)$ defined in Section \ref{sec:closed} induce an injective map $\iota: F(\Sigma) \to F(\tSi)$, $(\tau, \sigma) \mapsto (\iota(\tau), \iota(\sigma))$. We have
$$
    \tbw(\iota(\tau), \iota(\sigma)) \big|_{\su_4 = 0} = \bw(\tau, \sigma).
$$
Each 4-cone $\iota(\sigma) \in \tSi(4)$ with $\sigma \in \Sigma(3) \subset \tSi(3)$ belongs to an additional flag $(\sigma, \iota(\sigma)) \in F(\tSi)$. We have
$$
    \tbw(\sigma, \iota(\sigma)) = \su_4.
$$

The additional 4-cone $\tsi_0 \in \tSi(4) \setminus \iota(\Sigma(3))$ belongs to the flags
$$
    (\iota(\tau_0), \tsi_0), (\ttau_2, \tsi_0), (\ttau_3, \tsi_0), (\ttau_4, \tsi_0) \in F(\tSi)
$$
where the facets $\ttau_2, \ttau_3, \ttau_4$ of $\tsi_0$ are given by
$$
    I'_{\ttau_2} = \{3, R+1, R+2\}, \quad I'_{\ttau_3} = \{2, R+1, R+2\}, \quad I'_{\ttau_4} = \{2, 3, R+1\}.
$$
The tangent weights are given by
$$
    \tbw(\iota(\tau_0), \tsi_0) = -\su_1, \quad \tbw(\ttau_2, \tsi_0) = -f\su_1 + \su_2, \quad \tbw(\ttau_3, \tsi_0) = f\su_1 - \su_2 - \su_4, \quad \tbw(\ttau_4, \tsi_0) = \su_1 + \su_4.
$$

\subsection{Equivariant cohomology and bases}\label{sect:EquivCohomology}


We fix an ordering of the $T'$-fixed points of $X$ by
$$
    p_1, \dots, p_m
$$
and denote the corresponding $\tT'$-fixed points of $\tX$ by 
$$
    \tp_1, \dots, \tp_m.
$$
We denote the additional $\tT'$-fixed point $p_{\tsi_0}$ of $\tX$ by $\tp_{m+1}$. 

We consider the basis $\{\phi_1,\dots,\phi_m\}$ of $H^*_{T'}(X) \otimes_{R_{T'}} S_{T'}$ defined as
$$
    \phi_i := \frac{[p_i]}{e_{T'}(T_{p_i}X)}=\frac{[p_i]}{\Delta^{i,T'}},\qquad \Delta^{i,T'} := e_{T'}(T_{p_i}X).
$$
Then for $i, j = 1, \dots, m$, we have
$$
    \phi_i \cup \phi_j=\delta_{ij}\phi_i, \qquad (\phi_i,\phi_j)_{X,T'}=\frac{\delta_{ij}}{\Delta^{i,T'}}
$$
where $(-,-)_{X, T'}$ is the $T'$-equivariant Poincar\'e pairing on $X$. It follows that $\{\phi_1,\dots,\phi_m\}$ is a canonical basis of the semi-simple Frobenius algebra
$$
    (H^*_{T'}(X) \otimes_{R_{T'}} S_{T'},\cup,(-,-)_{X,T'}).
$$

Similarly, we define the basis $\{\tphi_1,\dots,\tphi_m,\tphi_{m+1} \}$ of $H^*_{\tT'}(\tX) \otimes_{R_{\tT'}} S_{\tT'}$ as
$$
    \tphi_i := \frac{[\tp_i]}{e_{\tT'}(T_{\tp_i}\tX)} = \frac{[\tp_i]}{\Delta^{i,\tT'}},\qquad \Delta^{i,\tT'} := e_{\tT'}(T_{\tp_i}\tX).
$$
Note that for any $i, j = 1, \dots, m$, we have
\begin{equation}\label{eqn:FixedPtRestrict}
    \tphi_i \big|_{\tp_j} = \phi_i \big|_{p_j} = \delta_{ij}, \qquad \tphi_i \big|_{\tp_{m+1}} = 0,
\end{equation}
and
$$
    \su_4^{-1} \Delta^{i,\tT'} \big|_{\su_4 = 0} = \Delta^{i,T'}.
$$
For $i, j = 1, \dots, m+1$, we have
$$
    \tphi_i \cup \tphi_j = \delta_{ij}\tphi_i,\qquad (\tphi_i,\tphi_j)_{\tX,\tT'} = \frac{\delta_{ij}}{\Delta^{i,\tT'}}
$$
where $(-,-)_{\tX, \tT'}$ is the $\tT'$-equivariant Poincar\'e pairing on $\tX$. It follows that $\{\tphi_1,\dots,\tphi_{m+1}\}$ is a canonical basis of the semi-simple Frobenius algebra
\begin{equation}\label{eqn:tXClassical}
    (H^*_{\tT'}(\tX) \otimes_{R_{\tT'}} S_{\tT'},\cup,(-,-)_{\tX,\tT'}).
\end{equation}

Moreover, for $i = 1, \dots, R+2$, let
$$
    \tD^{\tT'}_i := c_1^{\tT'}(\cO_{\tX}(V(\trho_{i}))) \in H^2_{\tT'}(\tX)
$$
denote the $\tT'$-equivariant Poincar\'e dual of the divisor $V(\trho_i)$. Specifically we denote
$$
    \tD := \tD^{\tT'}_{R+1}.
$$
Since the divisor $V(\trho_{R+1})$ only contains the $\tT'$-fixed point $p_{\tsi_0} = \tp_{m+1}$, we have that
\begin{equation}\label{eqn:DivPtClasses}
    \tD = \tD\big|_{\tp_{m+1}} \tphi_{m+1} = -\su_1 \tphi_{m+1}.
\end{equation}

\section{Gromov-Witten theory and open/closed correspondence}\label{sect:GW}
In this section, we review the different types of Gromov-Witten invariants involved in our study of Frobenius structures, specifically the closed invariants of $X$ and $\tX$ as well as the open invariants of $(X, L)$. We then use the open/closed correspondence \cite{LY21,LY22} to obtain a refined relation among the generating functions of Gromov-Witten invariants (Theorem \ref{thm:OpenClosed}).

\subsection{Closed Gromov-Witten invariants of $X$ and $\tX$}\label{sect:ClosedGW}
We refer to \cite{Liu13} for additional details on virtual localization \cite{GP99} in the Gromov-Witten theory of toric varieties.

For $n \in \bZ_{\ge 0}$ and effective class $\beta \in E(X)$ (see \eqref{eqn:EffClasses}), let $\Mbar_{0,n}(X, \beta)$ be the moduli space of genus-zero, $n$-pointed, degree-$\beta$ stable maps to $X$. Given $T'$-equivariant cohomology classes $\gamma_1, \dots, \gamma_n \in H^*_{T'}(X) \otimes_{R_{T'}} S_{T'}$ as insertions, we define the \emph{closed Gromov-Witten invariant}
$$
    \inner{\gamma_1, \dots, \gamma_n}^{X, T'}_{0,n,\beta} := \int_{[\Mbar_{0,n}(X, \beta)^{T'}]^{\vir}} \frac{\prod_{i=1}^n \ev_i^*(\gamma_i)}{e_{T'}(N^{\vir})} \qquad \in S_{T'}
$$
by localization with respect to the torus $T'$, where for $i = 1, \dots, n$, $\ev_i: \Mbar_{0,n}(X, \beta) \to X$ is the evaluation map at the $i$-th marked point.

We now define a generating function of such invariants. The \emph{Novikov ring} of $X$ is the completion of the semigroup ring of $E(X)$,
$$
    \Lambda_X := \left\{\sum_{\beta \in E(X)} c_\beta Q^\beta : c_\beta \in \bC \right\}.
$$
in which we use $Q^\beta$ to denote the semigroup ring element corresponding to $\beta \in E(X)$. We will also use the equivariant versions
$$
    \Lambda_X^{T'} := S_{T'} \otimes_{\bC} \Lambda_X, \qquad \Lambda_X^{T_f} := S_{T_f} \otimes_{\bC} \Lambda_X.
$$
Consider the basis $\{\phi_1,\dots,\phi_m\}$ of $H^*_{T'}(X) \otimes_{R_{T'}} S_{T'}$ defined in Section \ref{sect:EquivCohomology}. Let
$$
    t := \sum_{i=1}^m t^i \phi_i
$$
where $t^1, \dots, t^m$ are formal variables viewed as coordinates. The genus-zero, $T'$-equivariant \emph{Gromov-Witten potential} of $X$ is the following generating function of closed Gromov-Witten invariants:
$$
    F_0^{X,T'}(t^1, \dots, t^m) := \sum_{\beta \in E(X)} \sum_{n \in \bZ_{\ge 0}} \frac{\inner{t, \dots, t}^{X, T'}_{0,n,\beta}}{n!} Q^\beta \qquad \in \Lambda_X^{T'}\formal{t^1, \dots, t^m}.
$$

Now we set up a parallel theory for $\tX$. For $n \in \bZ_{\ge 0}$ and effective class $\tbeta \in E(\tX)$ (see \eqref{eqn:EffClasses}), let $\Mbar_{0,n}(\tX, \tbeta)$ be the moduli space of genus-zero, $n$-pointed, degree-$\tbeta$ stable maps to $\tX$. Given $\tT'$-equivariant cohomology classes $\tgamma_1, \dots, \tgamma_n \in H^*_{\tT'}(\tX) \otimes_{R_{\tT'}} S_{\tT'}$ as insertions, we define the \emph{closed Gromov-Witten invariant}
$$
    \inner{\tgamma_1, \dots, \tgamma_n}^{\tX, \tT'}_{0,n,\tbeta} := \int_{[\Mbar_{0,n}(\tX, \tbeta)^{\tT'}]^{\vir}} \frac{\prod_{i=1}^n \ev_i^*(\tgamma_i)}{e_{\tT'}(N^{\vir})} \qquad \in S_{\tT'}
$$
by localization with respect to the torus $\tT'$, where for $i = 1, \dots, n$, $\ev_i: \Mbar_{0,n}(\tX, \tbeta) \to \tX$ is the evaluation map at the $i$-th marked point.

The \emph{Novikov ring} of $\tX$ is the completion of the semigroup ring of $E(\tX)$:
$$
    \Lambda_{\tX} := \left\{\sum_{\tbeta \in E(\tX)} c_{\tbeta} \tQ^{\tbeta} : c_{\tbeta} \in \bC \right\}.
$$
in which we use $\tQ^{\tbeta}$ to denote the semigroup ring element corresponding to $\tbeta \in E(\tX)$. We will also use the equivariant version
$$
    \Lambda_{\tX}^{\tT'} := S_{\tT'} \otimes_{\bC} \Lambda_{\tX}.
$$
Consider the basis $\{\tphi_1,\dots,\tphi_m, \tphi_{m+1}\}$ of $H^*_{\tT'}(\tX) \otimes_{R_{\tT'}} S_{\tT'}$ defined in Section \ref{sect:EquivCohomology}. Let
$$
    \tilde{t} := \sum_{i=1}^m t^i \tphi_i, \qquad \hat{t} := \tilde{t} + t^{m+1}\tphi_{m+1}
$$
where $t^1, \dots, t^m$ are formal variables as before and $t^{m+1}$ is an additional formal variable. The genus-zero, $\tT'$-equivariant \emph{Gromov-Witten potential} of $\tX$ is the following generating functions of closed Gromov-Witten invariants:
$$
    F_0^{\tX,\tT'}(t^1, \dots, t^m, t^{m+1}) := \sum_{\tbeta \in E(\tX)} \sum_{n \in \bZ_{\ge 0}} \frac{\inner{\hat{t}, \dots, \hat{t}}^{\tX, \tT'}_{0,n,\tbeta}}{n!}\tQ^{\tbeta} \qquad \in \Lambda_{\tX}^{\tT'}\formal{t^1, \dots, t^m, t^{m+1}}.
$$

By \eqref{eqn:DivPtClasses}, we have
$$
    \hat{t} = \tilde{t} - \frac{t^{m+1}}{\su_1}\tD.
$$
Recall from Section \ref{sect:CurveClass} that each $\tbeta \in E(\tX)$ can be uniquely expressed as $\iota_*(\beta) + d[l_{\iota(\tau_0)}]$ for some $\beta \in E(X)$ and $d \in \bZ_{\ge 0}$. The divisor equation then implies that
\begin{equation}\label{eqn:4FoldF0}
    F_0^{\tX,\tT'}(t^1, \dots, t^m, t^{m+1}) = \frac{(t^{m+1})^3}{6\Delta^{m+1,\tT'}} + \sum_{\tbeta = (\beta, d) \in E(\tX)} \sum_{n \in \bZ_{\ge 0}} \frac{\inner{\tilde{t}, \dots, \tilde{t}}^{\tX, \tT'}_{0,n,\tbeta}}{n!}\tQ^{\iota_*(\beta)}\left(e^{-\frac{t^{m+1}}{\su_1}} \tQ^{[l_{\iota(\tau_0)}]} \right)^d.
\end{equation}
Here the term $\frac{(t^{m+1})^3}{6\Delta^{m+1,\tT'}}$ captures the $t^{m+1}$-dependence of the (3-pointed) degree-$0$ invariants in $F_0^{\tX,\tT'}$:
$$
    \frac{\inner{t^{m+1}\tphi_{m+1}, t^{m+1}\tphi_{m+1}, t^{m+1}\tphi_{m+1}}^{\tX, \tT'}_{0,3,0}}{3!} = \frac{(t^{m+1})^3}{6} (\tphi_{m+1} \cup \tphi_{m+1},\tphi_{m+1})_{\tX,\tT'} = \frac{(t^{m+1})^3}{6\Delta^{m+1,\tT'}}.
$$
Note that $\tphi_i \cup \tphi_{m+1} = 0$ for any $i = 1, \dots, m$.

\subsection{Open Gromov-Witten invariants of $(X, L, f)$}\label{sect:OpenGW}
Recall from Section \ref{sect:OpenGeometry} that the $T'_{\bR}$-action on $X$ preserves the Lagrangian $L$ and may thus be used to define open Gromov-Witten invariants, specifically disk invariants which are virtual counts of open stable maps from genus-zero domains with one boundary component. We now recall the definitions and refer to \cite{FL13,FLT22} for additional details.

For $n \in \bZ_{\ge 0}$ and effective class $\beta' = (\beta, d) \in E(X, L)$ (see \eqref{eqn:EffClasses}) with $d \in \bZ_{>0}$, let $\Mbar_{(0,1),n}(X,L \mid \beta', d)$ be the moduli space of degree-$\beta'$ stable maps to $(X,L)$ from domains $(C, \partial C)$ with
\begin{itemize}
    \item topological type $(0,1)$, i.e. $C$ is a nodal Riemann surface of arithmetic genus zero with one open disk removed, and
    \item $n$ interior marked points disjoint from $\partial C$.
\end{itemize}
Given $T'$-equivariant (or equivalently $T'_{\bR}$-equivariant) cohomology classes $\gamma_1, \dots, \gamma_n \in H^*_{T'}(X) \otimes_{R_{T'}} S_{T'}$ as insertions, we define the \emph{disk invariant}
$$
    \inner{\gamma_1, \dots, \gamma_n}^{X, L}_{(0,1),n,\beta', d} := \int_{[\Mbar_{(0,1),n}(X,L \mid \beta', d)^{T'_{\bR}}]^{\vir}} \frac{\prod_{i=1}^n \ev_i^*(\gamma_i)}{e_{T'_{\bR}}(N^{\vir})} \qquad \in S_{T'}
$$
by localization with respect to the compact torus $T'_{\bR}$, where for $i = 1, \dots, n$, $\ev_i: \Mbar_{(0,1),n}(X,L \mid \beta', d) \to X$ is the evaluation map at the $i$-th marked point. Here, we identify the field of fractions of $H^*_{T'_{\bR}}(\pt)$ with $S_{T'}$. Furthermore, using the framing $f \in \bZ$, we take a weight restriction to define
$$
    \inner{\gamma_1, \dots, \gamma_n}^{X, (L,f)}_{(0,1),n,\beta', d} := \inner{\gamma_1, \dots, \gamma_n}^{X, L}_{(0,1),n,\beta', d} \big|_{\su_2 - f\su_1 = 0} \qquad \in S_{T_f}.
$$
In this paper, we will only need to work with insertions for which the above weight restriction of the disk invariant is defined.

The completion of the semigroup ring of $E(X,L)$ is
$$
    \Lambda_{X,L}:= \left\{\sum_{(\beta,d) \in E(X,L)} c_{(\beta, d)} Q^\beta\sX_0^d : c_{(\beta, d)} \in \bC \right\} = \Lambda_X \formal{\sX_0}
$$
in which we introduce the new formal variable $\sX_0$ for the last component. Note that the isomorphism $\iota_*:E(X,L) \cong E(\tX)$ induces an isomorphism $\Lambda_{X,L} \cong \Lambda_{\tX}$ under the change of variables $\tQ^{\iota_*(\beta)} = Q^\beta$, $\tQ^{[l_{\iota(\tau_0)}]} = \sX_0$. We will also use the equivariant version
$$
    \Lambda_{X,L}^{T_f} := S_{T_f} \otimes_{\bC} \Lambda_{X,L} = \Lambda_X^{T_f} \formal{\sX_0}.
$$
Consider the basis $\{\phi_1,\dots,\phi_m\}$ of $H^*_{T'}(X) \otimes_{R_{T'}} S_{T'}$ and $t = \sum_{i=1}^m t^i \phi_i$ as in Section \ref{sect:ClosedGW}. Let $t^o$ be an additional formal variable for the open sector. The $T_f$-equivariant \emph{disk potential} of $(X,L,f)$ is the following generating functions of disk invariants:
\begin{align*}
    F_{0,1}^{X,(L, f)}(t^1, \dots, t^m, t^o) :=& \sum_{\substack{(\beta, d) \in E(X,L) \\ d \in \bZ_{>0}}} \sum_{n \in \bZ_{\ge 0}} \frac{\inner{t, \dots, t}^{X, (L,f)}_{(0,1),n,\beta+d[B], d}}{n!}Q^\beta(e^{t^o}\sX_0)^d \\
    & \in \Lambda_{X}^{T_f}\formal{t^1, \dots, t^m, e^{t^o}\sX_0}.
\end{align*}
Conceptually, we may view $t^o$ as parameterizing a divisor-like insertion arising from the open sector and $\sX := e^{t^o}\sX_0$ as parameterizing the winding numbers of disk invariants. Note that $F_{0,1}^{X,(L, f)}$ is supported on the ideal of $\Lambda_{X,L}$ generated by $\sX_0$. For later use, we introduce the following modified version:
\begin{align*}
    \int F_{0,1}^{X,(L, f)}(t^1, \dots, t^m, t^o) := & \sum_{\substack{(\beta, d) \in E(X,L) \\ d \in \bZ_{>0}}} \sum_{n \in \bZ_{\ge 0}} \frac{\inner{t, \dots, t}^{X, (L,f)}_{(0,1),n,\beta+d[B], d}}{d \cdot n!}Q^\beta(e^{t^o}\sX_0)^d\\
    & \in \Lambda_{X}^{T_f}\formal{t^1, \dots, t^m, e^{t^o}\sX_0}
\end{align*}
where $\int$ is interpreted as taking the antiderivative with respect to $t^o$. We note that the insertions $\phi_1, \dots, \phi_m$ are homogeneous of degree 0 and do not introduce additional poles along $\su_2-f\su_1$. Thus, the weight restriction to $\su_2 - f\su_1 = 0$ in the definition of the disks invariants in $F_{0,1}^{X,(L, f)}$ is valid. Similarly, it is valid to apply this weight restriction to the closed invariants of $X$ in $F_0^{X,T'}$.


\subsection{Open/closed correspondence}\label{sect:OpenClosed}
The open/closed correspondence \cite{LY21,LY22} identifies the genus-zero open Gromov-Witten theory of $(X,L,f)$ and closed Gromov-Witten theory of $\tX$ at the numerical level of invariants as well as the level of generating functions. In this paper, we use the following statement of the correspondence. We introduce the notation
\begin{equation}\label{eqn:sv}
    \sv := \begin{cases}
        \tbw(\ttau_2, \tsi_0) = \su_2 - f\su_1 & \text{if $f \ge 0$},\\
        -\tbw(\ttau_3, \tsi_0) = \su_2 - f\su_1 + \su_4 & \text{if $f < 0$}.
    \end{cases}
\end{equation}

\begin{theorem}[\cite{LY22}]\label{thm:OpenClosed}
The Gromov-Witten potential $F_0^{\tX,\tT'}$ of $\tX$ can be expanded as
\begin{equation}\label{eqn:F0Expansion}
\begin{aligned}
    F_0^{\tX,\tT'}(t^1, \dots, t^m, t^{m+1}) = & \frac{(t^{m+1})^3}{6\Delta^{m+1,\tT'}} + \su_4^{-1}\tA(t^1, \dots, t^m) + \sv^{-1}\tB(t^1, \dots, t^m, t^{m+1}) \\
    & + \su_4\sv^{-1}\tC_1 (t^1, \dots, t^m, t^{m+1}) + \tC_2(t^1, \dots, t^m, t^{m+1})
\end{aligned}
\end{equation}
where
\begin{enumerate}[label=(\alph*)]
    \item Each of $\tA, \tB, \tC_1, \tC_2$ has a well-defined weight restriction to $\su_4 = 0,  \su_2 - f\su_1 = 0$.
    
    \item $\tA$ is supported on the Novikov variables $\{\tQ^{\iota_*(\beta)}: \beta \in E(X)\}$ and
    $$
        \tA(t^1, \dots, t^m) \big|_{\su_4 = 0} = F_0^{X,T'}(t^1, \dots, t^m)
    $$
    after the change of variables $\tQ^{\iota_*(\beta)} = Q^\beta$.

    \item We have
    $$
        \tB(t^1, \dots, t^m, t^{m+1}) \big|_{\su_4 = 0, \su_2 - f\su_1 = 0} = \int F_{0,1}^{X,(L, f)}(t^1, \dots, t^m, t^o)
    $$
    after the change of variables $\tQ^{\iota_*(\beta)} = Q^\beta$, $\tQ^{[l_{\iota(\tau_0)}]} = \sX_0$, and $t^{m+1} = -\su_1 t^o$.
\end{enumerate}
\end{theorem}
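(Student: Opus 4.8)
The plan is to prove the statement by virtual localization applied to $F_0^{\tX,\tT'}$ with respect to the torus $\tT'$, reorganizing the resulting sum over decorated graphs according to how each graph meets the additional torus-fixed point $\tp_{m+1}$ and the additional invariant $\bP^1 = l_{\iota(\tau_0)}$, and then matching the pieces against the localization graph sums that compute $F_0^{X,T'}$ and the disk potential of $(X,L,f)$. The backbone is the numerical open/closed correspondence of \cite{LY21}, which compares --- flag by flag and graph by graph --- the $\tT'$-fixed loci of $\Mbar_{0,n}(\tX,\tbeta)$ against the $T'_{\bR}$-fixed loci of the open moduli $\Mbar_{(0,1),n}(X,L\mid\beta',d)$; the content of the present statement is to upgrade that comparison to an identity of generating functions, so the main new labor is the bookkeeping of Novikov and formal variables and of the pole structure in $\su_4$ and $\sv$.

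First I would write $F_0^{\tX,\tT'}$ as a sum over connected genus-zero decorated graphs, each vertex recording a fixed point $\tp_{\ell(v)}$ of $\tX$ with a contracted genus-zero stable curve and each edge recording a $\tT'$-invariant curve --- one of the $\iota(l_\tau)$ with $\tau\in\Sigma(2)$, the new $\bP^1$, or a non-compact invariant line --- with a multiplicity. Since $\tD = -\su_1\tphi_{m+1}$ and $\tbeta\cdot[V(\trho_{R+1})] = d$, the divisor equation isolates the $t^{m+1}$-dependence exactly as in \eqref{eqn:4FoldF0}, producing the cubic term and a factor $e^{-dt^{m+1}/\su_1}$ on every graph of relative degree $d$ along the new $\bP^1$. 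I would then track the only sources of $\su_4$- and $\sv$-poles, the Euler classes of virtual normal bundles: a vertex at $\tp_i$ with $i\le m$ contributes the fiber weight $\su_4$ of $\tX\to X\sqcup D$, hence at most a simple $\su_4$-pole, while a vertex at $\tp_{m+1}$ contributes, besides $-\su_1$, the weights read off from $\tbw(\ttau_2,\tsi_0)=\su_2-f\su_1$, $\tbw(\ttau_3,\tsi_0)=f\su_1-\su_2-\su_4$, $\tbw(\ttau_4,\tsi_0)=\su_1+\su_4$, hence at most a simple $\sv$-pole. Because $\tX$ is a line bundle whose zero section $X\sqcup D$ contains every old fixed point, a connected genus-zero graph touches $\tp_{m+1}$ only along the distinguished $\bP^1$ or its specializations; counting such graphs yields exactly the four-term shape of \eqref{eqn:F0Expansion}, with $\su_4^{-1}\tA$ the contribution of graphs lying entirely on the zero section and avoiding $D$, the $\sv^{-1}\tB$ and $\su_4\sv^{-1}\tC_1$ terms the graphs with a single end reaching $\tp_{m+1}$, and $\tC_2$ the regular remainder.

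To identify $\tA$, restrict the $\su_4^{-1}\tA$-piece to $\su_4 = 0$: the surviving graphs are precisely the $T'$-decorated graphs of $X$, and matching contributions via $\su_4^{-1}\Delta^{i,\tT'}|_{\su_4=0} = \Delta^{i,T'}$ and $\tbw(\iota(\tau),\iota(\sigma))|_{\su_4=0} = \bw(\tau,\sigma)$ gives $\tA|_{\su_4=0} = F_0^{X,T'}$ after $\tQ^{\iota_*(\beta)}=Q^\beta$, which is (b). For (c), extract the coefficient $\tB$ of $\sv^{-1}$ and restrict to $\su_4=0,\ \su_2-f\su_1=0$: the contributing graphs have one distinguished edge of relative degree $d$ ending at $\tp_{m+1}$, whose vertex/edge contribution equals, by the numerical correspondence of \cite{LY21}, the disk-vertex factor in the $T'_{\bR}$-localization of $\Mbar_{(0,1),n}(X,L\mid\beta+d[B],d)$, with $d$ becoming the winding number. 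The factor $1/d$ separating $\int F_0^{X,(L,f)}$ from the raw edge sum, together with $t^{m+1}=-\su_1t^o$ turning $e^{-dt^{m+1}/\su_1}$ into $(e^{t^o})^d$ --- i.e.\ into the open-variable dependence $(e^{t^o}\sX_0)^d$ of $F_{0,1}^{X,(L,f)}$ --- accounts for the remaining bookkeeping. Part (a) is then immediate, as $F_0^{X,T'}$ and $\int F_0^{X,(L,f)}$ are manifestly defined over the stated rings.

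The hard part will be the identification underlying (c): that the disk-type contribution in the closed $\tX$-localization coincides with the disk vertex of the open $(X,L,f)$-theory. This requires matching term by term the genus-zero Hodge integrals and combinatorial disk factors (of the type appearing in \cite{FL13,FLT22}) at $\tp_{m+1}$ against the corresponding vertex in $\tX$, with careful attention to which of the two normal weights $-\su_1$ and $\sv$ plays the role of the disk-boundary weight after the specialization $\su_2-f\su_1=0$ --- this is exactly where the framing $f$ enters, and the origin of the two cases $f\ge0$, $f<0$ in the definition of $\sv$ (which sign the extra facet weight $\pm\su_4$ carries). A secondary obstacle is the pole-structure claim: proving that $F_0^{\tX,\tT'}$ has \emph{no} $\su_4$- or $\sv$-poles beyond the four displayed terms needs a genuinely global analysis of connected genus-zero graphs on a line bundle over a zero section containing fixed points, not merely a term-by-term bound. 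Both are precisely what \cite{LY21,LY22} supply, so the proof reduces to assembling their graph-level comparison into \eqref{eqn:F0Expansion}.
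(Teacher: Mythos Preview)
Your proposal is correct and follows essentially the same route as the paper's proof in the appendix: both compute $F_0^{\tX,\tT'}$ by $\tT'$-localization, separate the decorated-graph sum according to how many edges land on the new $\bP^1 = l_{\iota(\tau_0)}$ and how many vertices lie over the old fixed points, track the resulting $\su_4$- and $\sv$-pole orders, and then identify the $\su_4^{-1}$- and $\sv^{-1}$-residues with $F_0^{X,T'}$ and the disk potential respectively via the graph-by-graph comparison of \cite{LY21,LY22}. The paper sharpens your per-vertex pole heuristic into the precise count that the total $\su_4$-power of a graph contribution is $|E_2|-c_0 \ge -1$ (with equality exactly when $d=0$), which is the ``genuinely global analysis'' you correctly flagged as the main technical point; once that is in hand, the two arguments coincide.
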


The statement of Theorem \ref{thm:OpenClosed} differs from the results in \cite{LY22}, particularly Theorems 4.1 and 5.4 there, in that it uses the classes $\phi_1,\dots,\phi_m$ and their counterparts $\tphi_1,\dots,\tphi_m$ to parameterize insertions, and that it also involves closed Gromov-Witten invariants of $X$. Nevertheless, it directly follows from the localization analysis and vanishing arguments in \cite[Section 4]{LY22}. We defer the derivation details to Appendix \ref{appdx:OpenClosed}.

\section{Frobenius structures on closed Gromov-Witten theory}\label{sect:closedFrob}
In this section, we review the equivariant formal Frobenius structures determined by the closed Gromov-Witten theory of $X$, $\tX$ and specifically the WDVV equations. Under the open/closed correspondence, we use the WDVV equation for $\tX$ to deduce a collection of non-linear partial differential equations that involve the open and closed Gromov-Witten invariants of $(X,L)$ (Proposition \ref{prop:openWDVV}).

\subsection{Formal Frobenius and $F$-manifolds}\label{sect:FormalDef}
We first recall the definition of formal Frobenius manifolds over a general base ring $R$ which is a commutative algebra over $\bC$, extending Definition \ref{def:Frobenius}. We refer to \cite[Chapter 2]{LP} for additional details.


\begin{definition}\label{def:FormalFrobGeneralBase}
A \emph{formal Frobenius manifold over $R$} consists of the data $(\hM,g,A,\one)$ where:
\begin{enumerate}
\item $\hM = \Spec (R\formal{K^\vee})$ is a formal manifold over $R$ defined by the completion of a free $R$-module $K$ of rank $m$ at the origin, where $K^\vee := \Hom_R(K,R)$;
\item $g$ is a formal, flat, $R$-linear, symmetric, nondegenerate quadratic form on the formal tangent bundle $\cT_{\hM}$ over $R$;
\item $A$ is a formal, $R$-linear, symmetric tensor
$$
    A:\cT_{\hM}\otimes \cT_{\hM}\otimes \cT_{\hM} \to \cO_{\hM}.
$$
\item $\one$ is a formal vector field on $\hM$ over $R$.
\end{enumerate}
The above data are required to satisfy the potentiality, associativity, and unit conditions as in Definition \ref{def:Frobenius}.
\end{definition}

A formal Frobenius manifold $\hM$ over $R$ may alternatively be viewed as a relative formal complex Frobenius manifold over the affine base $\Spec (R)$. Elements in $R$ pull back to constants in the structure sheaf $\cO_{\hM}$.

Given a formal Frobenius manifold $\hM = \Spec (R\formal{K^\vee})$ as above, the origin is the only point in $\hM$ and $\cT_{\hM} \cong K \otimes_R \cO_{\hM}$. The product $\star$ defined by the associativity condition specializes to an $R$-algebra $(K, \star)$ at the origin.

\begin{definition}
A formal Frobenius manifold $\hM$ over $R$ is \emph{semi-simple} if the induced $R$-algebra $(K \otimes_R R\formal{K^\vee}, \star)$ is isomorphic to $\bigoplus_{i = 1}^m R\formal{K^\vee}$ with the product algebra structure.
\end{definition}

Similarly, we may define flat formal $F$-manifolds over the general base ring $R$, extending Definition \ref{def:F-mfd}.

\begin{definition}\label{def:FormalFGeneralBase}
A \emph{flat formal $F$-manifold over $R$} consists of the data $(\hM,\nabla,\star,\one)$ where
\begin{enumerate}
\item $\hM = \Spec (R\formal{K^\vee})$ is a formal manifold over $R$ defined by the completion of a free $R$-module $K$ of rank $m+1$ at the origin;
\item $\nabla$ is an $R$-linear connection on the formal tangent bundle $\cT_{\hM}$,
\item $\star$ defines an algebra structure on $\cT_{\hM}$,
\item $\one$ is a $\nabla$-flat formal vector field on $\hM$ over $R$ which is a unit for $\star$.
\end{enumerate}
The above data satisfy the condition that the connection $\nabla^z:=\nabla-\frac{1}{z}\star$ is flat and symmetric for any $z\in\bP^1$.
\end{definition}

\subsection{Gromov-Witten case}\label{sect:generalFrob}
Let $\cX$ be a smooth projective variety. Let $\{T_i\}_{i=1}^m$ be a basis of $H^*(\cX)$ and $t^1,\dots,t^m$ be the corresponding coordinates. Consider the genus-zero Gromov-Witten potential $F_0^{\cX}$ of $\cX$.
Let
$$
g_{ij}=( T_i,T_j  )_{\cX}=\int_{\cX}T_i\cup T_j
$$
and $(g^{ij})=(g_{ij})^{-1}$.

Let $\partial_i := \frac{\partial}{\partial t^i}$. As stated in Theorem \ref{thm:WDVV general}, for any $i,j,k,l\in\{1,\dots,m\}$, the following WDVV equation holds:
$$
\partial_i\partial_j\partial_\nu F_0^{\cX}\cdot g^{\nu\mu}\cdot \partial_\mu\partial_k\partial_l F_0^{\cX}
=\partial_j\partial_k\partial_\nu F_0^{\cX}\cdot g^{\nu\mu}\cdot \partial_\mu\partial_i\partial_l F_0^{\cX}.
$$
For any $i,j\in\{1,\dots,m\}$, we define the quantum product $T_i\star_t T_j$ as
$$
(T_i\star_t T_j,T_k)_{\cX}=\frac{\partial^3 F_0^{\cX}}{\partial t^i\partial t^j\partial t^k}.
$$
The WDVV equation implies that the quantum product $\star_t$ is associative.

Moreover, we can define a formal Frobenius manifold as follows. Let
$$
H := \mathrm{Spec}(\Lambda_{\cX} [ t^1,\dots,t^m]),
$$
where $\Lambda_{\cX}$ is the Novikov ring of $\cX$. Let $\hH$ be the formal completion of $H$ along the origin:
$$
\hH := \mathrm{Spec}(\Lambda_{\cX}\formal{t^1,\dots,t^m}).
$$
Let $\cO_{\hH}$ be the structure sheaf on $\hH$ and $\cT_{\hH}$ be the tangent sheaf on
$\hH$.
Then $\cT_{\hH}$ is a sheaf of free $\cO_{\hH}$-modules of rank $N$.
Given an open set $U$ in $\hH$, we have
$$
\cT_{\hH}(U)  \cong \bigoplus_{i=1}^m \cO_{\hH}(U) \frac{\partial}{\partial t^{i}}.
$$
The quantum product and the Poincar\'{e} pairing define the structure of a formal
Frobenius manifold on $\hH$ over $\Lambda_{\cX}$:
$$
\left(\frac{\partial}{\partial t^i}\star_t \frac{\partial}{\partial t^j},\frac{\partial}{\partial t^k}\right)_{\cX}=\frac{\partial^3 F_0^{\cX}}{\partial t^i\partial t^j\partial t^k}, \qquad
\left(\frac{\partial}{\partial t^i}, \frac{\partial}{\partial t^j}\right)_{\cX}=g_{ij}.
$$

The generalization of the WDVV equation to the equivariant setting is straightforward. Suppose $\cX$ admits an action of a torus $\bT$ and let $\{T_i\}_{i=1}^m$ be a basis of $H^*_{\bT}(\cX)$. One only needs to replace $F_0^{\cX}$ by the genus-zero $\bT$-equivariant Gromov-Witten potential $F_0^{\cX,\bT}$ and replace $( T_i,T_j)_{\cX}$ by the $\bT$-equivariant Poincar\'{e} pairing $( T_i,T_j)_{\cX,\bT}$. Then the WDVV equation (Theorem \ref{thm:WDVV general}) still holds. Moreover, in the equivariant setting, $\cX$ can be allowed to be non-compact. We only need $\Mbar_{g,n}(\cX,\beta)^\bT$ to be compact in order to define $\bT$-equivariant Gromov-Witten invariants of $\cX$.

In the equivariant setting, we can still define a formal Frobenius manifold as follows. Let
$$
H := \mathrm{Spec}(\Lambda_{\cX}^{\bT} [ t^1,\dots,t^m]),
$$
where $\Lambda_{\cX}^{\bT}$ is the base change of $\Lambda_{\cX}$ to adjoin equivariant parameters of $\bT$. Let $\hH$ be the formal completion of $H$ along the origin:
$$
\hH :=\mathrm{Spec}(\Lambda_{\cX}^{\bT}\formal{t^1,\dots,t^m}).
$$
Let $\cO_{\hH}$ be the structure sheaf on $\hH$ and $\cT_{\hH}$ be the tangent sheaf on
$\hH$.
Then $\cT_{\hH}$ is a sheaf of free $\cO_{\hH}$-modules of rank $m$.
Given an open set $U$ in $\hH$, we have
$$
\cT_{\hH}(U)  \cong \bigoplus_{i=1}^m \cO_{\hH}(U) \frac{\partial}{\partial t^{i}}.
$$
The quantum product and the $\bT$-equivariant Poincar\'{e} pairing define the structure of a formal Frobenius manifold on $\hH$ over $\Lambda_{\cX}^{\bT}$:
$$
\left(\frac{\partial}{\partial t^i}\star_t \frac{\partial}{\partial t^j},\frac{\partial}{\partial t^k}\right)_{\cX,\bT}=\frac{\partial^3 F_0^{\cX,\bT}}{\partial t^i\partial t^j\partial t^k}, \qquad
\left(\frac{\partial}{\partial t^i}, \frac{\partial}{\partial t^j}\right)_{\cX,\bT}=g_{ij}.
$$

\subsection{Specializing to $X$ and $\tX$}\label{sect:specialFrob}
Now we specialize to the toric Calabi-Yau 3-fold $X$ and the toric Calabi-Yau 4-fold $\tX$. Recall from Section \ref{sect:EquivCohomology} that we defined the bases $\{\phi_1, \dots, \phi_m\}$, $\{\tphi_1, \dots, \tphi_{m+1}\}$ of $H^*_{T'}(X)$, $H^*_{\tT'}(\tX)$ respectively. Let
$$
    \hH_X := \Spec(\Lambda_X^{T'}\formal{t^1, \dots, t^m}), \qquad \hH_{\tX} := \Spec(\Lambda_{\tX}^{\tT'}\formal{t^1, \dots, t^{m+1}})
$$
be the induced equivariant formal Frobenius manifolds constructed as in Section \ref{sect:generalFrob}. The quantum products are given by the closed Gromov-Witten potentials $F_0^{X,T'}, F_0^{\tX,\tT'}$ respectively. The equivariant Poincar\'e parings are \emph{diagonal}:
\begin{align*}
    &g_{ij} := (\phi_i,\phi_j)_{X,T'}=\frac{\delta_{ij}}{\Delta^{i,T'}}, && i,j \in \{1, \dots, m\};\\
    &\tg_{ij} := (\tphi_i,\tphi_j)_{\tX,\tT'}=\frac{\delta_{ij}}{\Delta^{i,\tT'}}, && i,j \in \{1, \dots, m+1\}.
\end{align*}
Note that for $i = 1, \dots, m$ we have
$$
    \tg_{ii} = \frac{1}{\su_4}g_{ii}.
$$

Let $(g^{ij})=(g_{ij})^{-1}$ and $(\tg^{ij})=(\tg_{ij})^{-1}$. For any $i,j,k,l \in \{1, \dots, m\}$, the WDVV equation for $X$ reads
\begin{equation}\label{eqn:X-WDVV}
\partial_i\partial_j\partial_\nu F_0^{X,T'}\cdot g^{\nu\nu}\cdot \partial_\nu\partial_k\partial_l F_0^{X,T'}
=\partial_j\partial_k\partial_\nu F_0^{X,T'}\cdot g^{\nu\nu}\cdot \partial_\nu\partial_i\partial_l F_0^{X,T'}
\end{equation}
where the summation index $\nu$ runs through $1, \dots, m$. For any $i,j,k,l \in \{1, \dots, m+1\}$, the WDVV equation for $\tX$ reads
\begin{equation}\label{eqn:tX-WDVV}
\partial_i\partial_j\partial_\nu F_0^{\tX,\tT'}\cdot \tg^{\nu\nu}\cdot \partial_\nu\partial_k\partial_l F_0^{\tX,\tT'}
=\partial_j\partial_k\partial_\nu F_0^{\tX,\tT'}\cdot \tg^{\nu\nu}\cdot \partial_\nu\partial_i\partial_l F_0^{\tX,\tT'}
\end{equation}
where the summation index $\nu$ runs through $1, \dots, m+1$.

\subsection{Recursive relations for open and closed invariants}\label{sect:recursive}
Now we combine the WDVV equation \eqref{eqn:tX-WDVV} for $\tX$ and the open/closed correspondence (Theorem \ref{thm:OpenClosed}) to obtain the following non-linear partial differential equations for the closed Gromov-Witten potential $F_0^{X,T'}$ of $X$ and the disk potential $F_{0,1}^{X,(L, f)}$ of $(X, L, f)$. For $i,j \in \{1, \dots, m\}$, we set
\begin{equation}\label{eqn:PairingRestrict}
    h_{ij} := g_{ij} \big|_{\su_2 - f\su_1 = 0}
\end{equation}
which is well-defined by Assumption \ref{assump:f}. Let $(h^{ij})=(h_{ij})^{-1}$.

\begin{proposition}\label{prop:openWDVV}
Denote $\partial_o := \frac{\partial}{\partial t^o}$. We have
\begin{enumerate}[label=(\Roman*), align=left, leftmargin=0pt, labelwidth=0pt, itemindent=!]
    \item For $i, j, k, l \in \{1, \dots, m\}$:
        \begin{enumerate}[label=(I\alph*), align=left, leftmargin=0pt, labelindent=\parindent, labelwidth=0pt, itemindent=!]
            \item 
            \begin{align*}
            & \partial_i\partial_j\partial_\nu F_0^{X,T'}\big|_{\su_2 - f\su_1 = 0}\cdot h^{\nu\nu}\cdot \partial_\nu\partial_k\partial_l \int F_{0,1}^{X,(L, f)}
            + \partial_i\partial_j\partial_\nu \int F_{0,1}^{X,(L, f)} \cdot h^{\nu\nu}\cdot \partial_\nu\partial_k\partial_l  F_0^{X,T'}\big|_{\su_2 - f\su_1 = 0}\\
            & = \partial_j\partial_k\partial_\nu F_0^{X,T'}\big|_{\su_2 - f\su_1 = 0}\cdot h^{\nu\nu}\cdot \partial_\nu\partial_i\partial_l \int F_{0,1}^{X,(L, f)}
            + \partial_j\partial_k\partial_\nu \int F_{0,1}^{X,(L, f)} \cdot h^{\nu\nu}\cdot \partial_\nu\partial_i\partial_l  F_0^{X,T'}\big|_{\su_2 - f\su_1 = 0};
            \end{align*}            

            \item $$
            \partial_i\partial_j\partial_\nu \int F_{0,1}^{X,(L, f)}\cdot h^{\nu\nu}\cdot \partial_\nu\partial_k\partial_l \int F_{0,1}^{X,(L, f)}
            =\partial_j\partial_k\partial_\nu \int F_{0,1}^{X,(L, f)}\cdot h^{\nu\nu}\cdot \partial_\nu\partial_i\partial_l \int F_{0,1}^{X,(L, f)};
            $$

            \item $$
                \partial_i\partial_j\partial_\nu F_0^{X,T'}\cdot g^{\nu\nu}\cdot \partial_\nu\partial_k\partial_l F_0^{X,T'}
                =\partial_j\partial_k\partial_\nu F_0^{X,T'}\cdot g^{\nu\nu}\cdot \partial_\nu\partial_i\partial_l F_0^{X,T'}.
                $$
        \end{enumerate}

    \item For $i, j, k \in \{1, \dots, m\}$:
    \begin{enumerate}[label=(II\alph*), align=left, leftmargin=0pt, labelindent=\parindent, labelwidth=0pt, itemindent=!]
        \item $$
        \partial_i\partial_j\partial_\nu F_0^{X,T'}\big|_{\su_2 - f\su_1 = 0}\cdot h^{\nu\nu}\cdot \partial_\nu\partial_k\partial_o \int F_{0,1}^{X,(L, f)}
        =\partial_j\partial_k\partial_\nu F_0^{X,T'}\big|_{\su_2 - f\su_1 = 0}\cdot h^{\nu\nu}\cdot \partial_\nu\partial_i\partial_o \int F_{0,1}^{X,(L, f)};
        $$

        \item $$
        \partial_i\partial_j\partial_\nu \int F_{0,1}^{X,(L, f)}\cdot h^{\nu\nu}\cdot \partial_\nu\partial_k\partial_o \int F_{0,1}^{X,(L, f)}
        =\partial_j\partial_k\partial_\nu \int F_{0,1}^{X,(L, f)}\cdot h^{\nu\nu}\cdot \partial_\nu\partial_i\partial_o \int F_{0,1}^{X,(L, f)}.
        $$
    \end{enumerate}

    \item For $i, j \in \{1, \dots, m\}$:
    \begin{enumerate}[label=(III\alph*), align=left, leftmargin=0pt, labelindent=\parindent, labelwidth=0pt, itemindent=!]
        \item $$
        \partial_i\partial_j\partial_\nu F_0^{X,T'}\big|_{\su_2 - f\su_1 = 0}\cdot h^{\nu\nu}\cdot \partial_\nu\partial_o\partial_o \int F_{0,1}^{X,(L, f)} -\su_1 \partial_i\partial_j\partial_o \int F_{0,1}^{X,(L, f)} =0;
        $$

        \item $$
        \partial_i\partial_j\partial_\nu \int F_{0,1}^{X,(L, f)}\cdot h^{\nu\nu}\cdot \partial_\nu\partial_o\partial_o \int F_{0,1}^{X,(L, f)}
        =\partial_j\partial_o\partial_\nu \int F_{0,1}^{X,(L, f)}\cdot h^{\nu\nu}\cdot \partial_\nu\partial_i\partial_o \int F_{0,1}^{X,(L, f)}.
        $$

    \end{enumerate}
\end{enumerate}
Here, the summation index $\nu$ runs through $1, \dots, m$. Identity (Ic) is valued in $\Lambda_X^{T'}\formal{t^1, \dots, t^m}$, while all the other identities are valued in $\Lambda_X^{T_f}\formal{t^1, \dots, t^m, e^{t^o}\sX_0}$.
\end{proposition}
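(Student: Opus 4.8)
The plan is to obtain every identity of Proposition \ref{prop:openWDVV} from the single WDVV equation \eqref{eqn:tX-WDVV} for $\tX$, by substituting the expansion \eqref{eqn:F0Expansion} of $F_0^{\tX,\tT'}$ from the open/closed correspondence (Theorem \ref{thm:OpenClosed}) and then isolating a few coefficients in the Laurent expansion of the resulting identity with respect to $\su_4$ and $\sv$. Throughout I would use $(\su_1,\su_4,\sv)$ as equivariant coordinates in place of $(\su_1,\su_2,\su_4)$ — an invertible linear change that makes ``coefficient of $\su_4^a\sv^b$'' unambiguous even when $f<0$ (where $\sv$ involves $\su_4$) — and note that once $\su_4$ is set to $0$ the restriction $\sv=0$ coincides with $\su_2-f\su_1=0$.

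First I would assemble the inputs. The pairing on $\hH_{\tX}$ is diagonal with $\tg^{\nu\nu}=\Delta^{\nu,\tT'}$, and $\su_4^{-1}\Delta^{\nu,\tT'}\big|_{\su_4=0}=\Delta^{\nu,T'}=g^{\nu\nu}$ for $\nu\le m$, while from the tangent weights at $\tsi_0$ one computes $\Delta^{m+1,\tT'}=\su_1\sv(\sv\pm\su_4)(\su_1+\su_4)$ (the sign according to $f\ge0$ or $f<0$), so $\Delta^{m+1,\tT'}\big|_{\su_4=0}=\su_1^2\sv^2$; thus the $\nu$-sum in \eqref{eqn:tX-WDVV} splits into a part over $\nu\le m$ carrying an overall factor $\su_4$ and a single $\nu=m+1$ term carrying $\Delta^{m+1,\tT'}$. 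Moreover $\tA$ is independent of $t^{m+1}$, so $\partial_{m+1}\tA=0$; the term $\frac{(t^{m+1})^3}{6\Delta^{m+1,\tT'}}$ is annihilated by $\partial_i$ for every $i\le m$, and $\partial_{m+1}^3$ of it equals $(\Delta^{m+1,\tT'})^{-1}$; and under $t^{m+1}=-\su_1t^o$ (Theorem \ref{thm:OpenClosed}(c)) the operator $\partial_{m+1}$ becomes $-\su_1^{-1}\partial_o$, $\tA\big|_{\su_4=0}$ becomes $F_0^{X,T'}$, $\tB\big|_{\su_4=0,\sv=0}$ becomes $\int F_{0,1}^{X,(L,f)}$ (with $\tQ^{\iota_*(\beta)}=Q^\beta$, $\tQ^{[l_{\iota(\tau_0)}]}=\sX_0$), and $\Delta^{\nu,T'}\big|_{\sv=0}=h^{\nu\nu}$.

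I would then run \eqref{eqn:tX-WDVV} in cases according to $s\in\{0,1,2\}$, the number of indices among $i,j,k,l$ equal to $m+1$, substituting \eqref{eqn:F0Expansion} into each factor and matching coefficients of suitable monomials $\su_4^a\sv^b$. For $s=0$: equation \eqref{eqn:tX-WDVV} has at worst a simple pole at $\su_4=0$; its $\su_4^{-1}$-coefficient involves only $\tA\big|_{\su_4=0}=F_0^{X,T'}$ and yields (Ic) (valued over $\Lambda_X^{T'}\formal{t^1,\dots,t^m}$, as $\tA$ is supported on $\{\tQ^{\iota_*(\beta)}\}$); subtracting this pole and restricting $\su_4=0$, the $\sv^{-1}$-coefficient of the $\su_4^0$-part is bilinear in $F_0^{X,T'}\big|_{\su_2-f\su_1=0}$ and $\int F_{0,1}^{X,(L,f)}$ and yields (Ia), and the $\sv^{-2}$-coefficient of the $\su_4^1$-part is quadratic in $\int F_{0,1}^{X,(L,f)}$ and yields (Ib). For $s=1$ there is no $\su_4^{-1}$-pole, and the coefficients of $\su_4^0\sv^{-1}$ and $\su_4^1\sv^{-2}$ yield (IIa) and (IIb); here each $\partial_{m+1}$ inside a $\tB$-factor becomes $-\su_1^{-1}\partial_o$ and the resulting overall factor $\su_1^{-1}$ cancels across the equation. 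For $s=2$ (say $k=l=m+1$) the $\su_4^0\sv^{-1}$-coefficient of the left-hand side contains, besides $\sum_\nu\partial_i\partial_j\partial_\nu F_0^{X,T'}\big|_{\su_2-f\su_1=0}\cdot h^{\nu\nu}\cdot\partial_\nu\partial_o\partial_o\int F_{0,1}^{X,(L,f)}$, the extra term $-\su_1\,\partial_i\partial_j\partial_o\int F_{0,1}^{X,(L,f)}$ arising from $\partial_i\partial_j\partial_{m+1}F_0^{\tX,\tT'}\cdot\Delta^{m+1,\tT'}\cdot\partial_{m+1}^3F_0^{\tX,\tT'}$ via $\partial_{m+1}^3$ of the cubic term, while the right-hand side has no $\su_4^0\sv^{-1}$-part; after clearing powers of $\su_1$ this is exactly (IIIa), and the $\su_4^1\sv^{-2}$-coefficient gives (IIIb). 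Cases $s\ge3$ give only identities already implied by the above (since $F_{0,1}^{X,(L,f)}$ depends on $t^o$ only through $e^{t^o}\sX_0$) and are omitted.

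The main obstacle is the pole-order bookkeeping: for each of these six extractions one must verify that the ``lower-order'' data — the subleading-in-$\su_4$ parts of $\tA$, $\tB$, $\tg^{\nu\nu}$, $\Delta^{m+1,\tT'}$, and the auxiliary pieces $\tC_1,\tC_2$ — contribute nothing to the monomial $\su_4^a\sv^b$ being isolated, so that the extracted coefficient is \emph{precisely} the stated identity after the substitutions listed above. The delicate checks are that the $\nu=m+1$ term is $\sv$-regular at $\su_4=0$ for $s=0,1$ (hence harmless there) but contributes to the $\su_4^0\sv^{-1}$-coefficient for $s=2$ through the cubic term (producing the extra summand in (IIIa)); and that, although $(\Delta^{m+1,\tT'})^{-1}$ develops spurious high-order $\sv$-poles when expanded as a power series in $\su_4$, these always appear multiplied by the factors of $\Delta^{m+1,\tT'}$ coming from $\partial_{m+1}$-derivatives of the cubic term and cancel before the relevant coefficient is taken. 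Carrying out these cancellations case by case is the technical core; the remainder is the substitution together with the dictionary of Theorem \ref{thm:OpenClosed}.
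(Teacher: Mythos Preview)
Your proposal is correct and follows essentially the same approach as the paper: substitute the expansion \eqref{eqn:F0Expansion} into the WDVV equation \eqref{eqn:tX-WDVV} for $\tX$, split into cases according to how many of $i,j,k,l$ equal $m+1$, and extract the coefficients of $\su_4^{-1}$, $\sv^{-1}$, and $\su_4\sv^{-2}$ to obtain (Ic), the (a)-identities, and the (b)-identities respectively; the paper likewise notes that the $\nu=m+1$ term is harmless except in case (IIIa), where the triple $\partial_{m+1}$-derivative of the cubic term supplies the cancelling factor $(\Delta^{m+1,\tT'})^{-1}$ against $\tg^{(m+1)(m+1)}$. Your write-up is somewhat more explicit about the pole-order bookkeeping (e.g.\ working in $(\su_1,\su_4,\sv)$-coordinates and flagging the cancellation of the $\sv$-poles in $(\Delta^{m+1,\tT'})^{-1}$), but this is elaboration rather than a different argument.
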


In particular, (Ic) recovers the WDVV equation \eqref{eqn:X-WDVV} for $X$.

\begin{proof}
The proposition directly follows from applying the expansion \eqref{eqn:F0Expansion} in Theorem \ref{thm:OpenClosed} to both sides of \eqref{eqn:tX-WDVV} and reading off appropriate coefficients, under the following rules:
\begin{itemize}
    \item For (I), apply with $i,j,k,l$ as they are.
    \item For (II), apply with $i,j,k$ as they are and $l = m+1$.
    \item For (III), apply with $i,j$ as they are and $k = l = m+1$.
    \item For (Ia), (IIa), (IIIa), read off the coefficients of $\sv^{-1}$ on both sides.
    \item For (Ib), (IIb), (IIIb), read off the coefficients of $\su_4\sv^{-2}$ on both sides.
    \item For (Ic), read off the coefficients of $\su_4^{-1}$ on both sides.
\end{itemize}
Here, we use the following observation: We have
\begin{equation}\label{eqn:tgExtra}
    \tg^{(m+1)(m+1)} = \Delta^{m+1,\tT'} = (\su_2 - f\su_1)(\su_4 + \su_2 - f\su_1) \su_1(\su_1+\su_4) = (\sv^2 \pm \su_4\sv) \su_1(\su_1+\su_4)
\end{equation}
where the sign `$\pm$' is `$+$' when $f \ge 0$ and `$-$' when $f<0$ (see \eqref{eqn:sv} for the notation $\sv$). It has second-order zeroes along $\sv, \su_4$, and thus, the $\nu = m+1$ terms in \eqref{eqn:tX-WDVV} do not contribute to the result except for case (IIIa), where the triple derivative $\partial_{m+1}\partial_{m+1}\partial_{m+1}\frac{(t^{m+1})^3}{6\Delta^{m+1,\tT'}}$ provides a cancelling factor $\frac{1}{\Delta^{m+1,\tT'}}$. Moreover, we change from $\partial_{m+1}$ to $\partial_o$ using the relation $\partial_{m+1} = -\frac{\partial_o}{\su_1}$.
\end{proof}

\begin{remark}\label{rmk:WDVVDiff} \rm{
Note that identities (IIa), (IIIa) of Proposition \ref{prop:openWDVV} resemble but are different from the open WDVV equation of \cite{HS, ST19} (stated in Theorem \ref{thm:openWDVV point}) for the disk potential with point-like boundary insertions, and the difference arises from how the $\nu = m+1$ terms contribute, as indicated in the proof above.
}
\end{remark}

\section{Frobenius structures on open Gromov-Witten theory}\label{sect:openFrob}
In this section, we use the equations in Proposition \ref{prop:openWDVV} to construct Frobenius structures on the open and closed Gromov-Witten theory of $(X,L)$, specifically:
\begin{itemize}
\item (Section \ref{sect:FrobStr}, Theorems \ref{thm:Frob}, \ref{thm:decomp}) a semi-simple formal Frobenius manifold structure on $\Spec(\Lambda_{X,L}^{T_f}[\epsilon]\formal{t^1, \dots, t^m, t^o})$ where $\epsilon$ is a nilpotent variable with $\epsilon^2 = 0$;
\item (Section \ref{sect:F-Str}, Theorem \ref{thm:F}) a flat formal $F$-manifold structure without unit on $\Spec(\Lambda_{X,L}^{T_f}\formal{t^1, \dots, t^m, t^o})$ in which the $t^o$-direction is nilpotent.
\end{itemize}
Both structures can be viewed as extensions of the semi-simple formal Frobenius manifold
\begin{equation}\label{eqn:HXRestrict}
    \hH_X^f := \Spec(\Lambda_{X,L}^{T_f}\formal{t^1, \dots, t^m}),
\end{equation}
which is obtained from $\hH_X$ by base change to $\Lambda_{X,L}^{T_f}$.

\subsection{A formal Frobenius structure}\label{sect:FrobStr}
In this section, we construct a Frobenius structure on the formal scheme
$$
    \hH_1 :=\mathrm{Spec}(\Lambda_{X,L}^{T_f}[\epsilon]\formal{t^1,\dots,t^m ,t^o})
$$
over the base ring
$$
    \Lambda_{X,L}^{T_f}[\epsilon] := \Lambda_{X,L}^{T_f} \otimes \bC[\epsilon]/\inner{\epsilon^2}.
$$
Let $\cO_{\hH_1}$ be the structure sheaf on $\hH_1$ and $\cT_{\hH_1}$ be the tangent sheaf on
$\hH_1$.
Then $\cT_{\hH_1}$ is a sheaf of free $\cO_{\hH_1}$-modules of rank $m+1$.
Given an open set $U$ in $\hH_1$, we have
$$
\cT_{\hH_1}(U)  \cong \bigoplus_{i=1}^m\cO_{\hH_1}(U) \frac{\partial}{\partial t^{i}}\bigoplus\cO_{\hH_1}(U)\frac{\partial}{\partial t^o}.
$$
We will construct a potential function $F$ involving both the open and closed Gromov-Witten invariants of $(X, L)$, as well as a pairing $(-,-)$ on $\cT_{\hH_1}$. We prove the associativity of the induced product $\star_t$ on $\cT_{\hH_1}$, which packages identities (Ia), (Ic), (IIa), and (IIIa) of Proposition \ref{prop:openWDVV}. We show that the resulting tuple $(\hH_1,\star_t,(-,-))$ is a semi-simple formal Frobenius manifold.

\subsubsection{Potential}\label{sec:potential}
Introduce the variable $\epsilon$ with $\epsilon^2=0$.
\begin{definition}\label{def:potential}
We define the \emph{potential function} $F$ as
\begin{equation}
F(t^1, \dots, t^m, t^o):= -\frac{\su_1}{6}(t^o)^3 + F_0^{X,T'}(t^1, \dots, t^m)\big|_{\su_2-f\su_1=0}+\epsilon\int F_{0,1}^{X,(L,f)}(t^1, \dots, t^m,t^o).
\end{equation}
\end{definition}

\subsubsection{Pairing}\label{sec:pairing}
In \eqref{eqn:PairingRestrict}, we defined the restriction $(h_{ij})$ of the $T'$-equivariant Poincar\'e pairing $(g_{ij})$ to $T_f$. We now extend this pairing to the $t^o$-direction. Recall that we have the change of variables $t^{m+1}=-\su_1 t^o$ from Theorem \ref{thm:OpenClosed}, which identifies $\frac{\partial}{\partial t^o}$ with $-\su_1\frac{\partial}{\partial t^{m+1}}$. Moreover, we have
$$
    \left(\frac{\partial}{\partial t^{m+1}}, \frac{\partial}{\partial t^{m+1}}\right)_{\tX,\tT'} = \tg_{(m+1)(m+1)} =\frac{1}{(\sv^2 \pm \su_4\sv) \su_1(\su_1+\su_4)}
$$
(see \eqref{eqn:tgExtra}). Clearing the second-order poles along $\sv, \su_4$, we set
$$
    h_{oo} := 1, \qquad \qquad h_{io} = h_{oi} := 0, \qquad i = 1, \dots, m.
$$

\begin{definition}\label{def:pairing}
We define the pairing $(-,-)$ on $\cT_{\hH_1}$ by the following: For any $i,j\in \{1,\dots,m,o\}$,
$$
    \left(\frac{\partial}{\partial t^i},\frac{\partial}{\partial t^j}\right) := h_{ij}.
$$
\end{definition}
As before, let $(h^{ij})=(h_{ij})^{-1}$.

\subsubsection{WDVV equations}
\begin{proposition}\label{prop:openWDVV1}
For any $i,j,k,l\in\{1,\dots,m,o\}$, the following WDVV equation holds:
\begin{equation}\label{eqn:openWDVV1}
\partial_i\partial_j\partial_\nu F\cdot h^{\nu\mu}\cdot \partial_\mu\partial_k\partial_l F
=\partial_j\partial_k\partial_\nu F\cdot h^{\nu\mu}\cdot \partial_\mu\partial_i\partial_l F
\end{equation}
where the summation indices $\nu, \mu$ run through $1, \dots, m, o$.
\end{proposition}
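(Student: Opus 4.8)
The strategy is to expand the potential $F$ into its three pieces and check that the WDVV equation \eqref{eqn:openWDVV1} for $F$ is a formal consequence of the identities already collected in Proposition \ref{prop:openWDVV}. Write $F = -\frac{\su_1}{6}(t^o)^3 + G + \epsilon H$, where $G := F_0^{X,T'}\big|_{\su_2 - f\su_1 = 0}$ and $H := \int F_{0,1}^{X,(L,f)}$. Since $\epsilon^2 = 0$, every product of two triple derivatives of $F$ splits into an $\epsilon^0$-part and an $\epsilon^1$-part, and \eqref{eqn:openWDVV1} is equivalent to the conjunction of its $\epsilon^0$-component and its $\epsilon^1$-component. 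The $\epsilon^0$-component involves only $G$ and the cubic term $-\frac{\su_1}{6}(t^o)^3$; the $\epsilon^1$-component is bilinear, one factor being a triple derivative of $G$ (or of the cubic term) and the other a triple derivative of $H$. So the first step is simply to record the triple derivatives: $\partial_i\partial_j\partial_k G$ for $i,j,k \le m$ is a triple derivative of $F_0^{X,T'}\big|_{\su_2-f\su_1 = 0}$ and vanishes if any index equals $o$; the cubic term contributes $\partial_o\partial_o\partial_o\big(-\frac{\su_1}{6}(t^o)^3\big) = -\su_1$ and nothing else; and $\partial_i\partial_j\partial_k H$ is a triple derivative of $\int F_{0,1}^{X,(L,f)}$, possibly with some indices equal to $o$.

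The second step is the bookkeeping over the index sets. The metric $(h^{\nu\mu})$ is diagonal with $h^{oo} = 1$ and $h^{\nu\nu}$ for $\nu \le m$ the inverse of the diagonal restricted Poincaré pairing, so the contractions in \eqref{eqn:openWDVV1} split as a sum over $\nu = \mu \in \{1,\dots,m\}$ plus the single term $\nu = \mu = o$. For the $\epsilon^0$-part: when all of $i,j,k,l$ lie in $\{1,\dots,m\}$ the $\nu = o$ terms drop (a triple derivative of $G$ with an index $o$ is zero, and $-\su_1$ needs all three indices $o$), leaving exactly identity (Ic) — which is \eqref{eqn:X-WDVV} restricted to $\su_2 - f\su_1 = 0$, valid by Assumption \ref{assump:f}. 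When some indices equal $o$, again any surviving triple derivative of $G$ forces its other indices into $\{1,\dots,m\}$, and one checks the equation is the trivial $0 = 0$ once too many indices are $o$, or reduces to (Ic) with the appropriate specialization. For the $\epsilon^1$-part one runs through the cases by the number of indices among $i,j,k,l$ equal to $o$: zero gives (Ia); one gives (IIa) (with the index of $\partial_o$ tracked through the cubic term's contribution $-\su_1 \partial\partial\partial_o H$, matching the $-\su_1$ term in (IIIa) when it appears on the correct side); two gives (IIIa) or (IIIb); and the cases with three or four indices equal to $o$ collapse because $G$ and the cubic term produce at most a single $o$-index factor, leaving a symmetric expression in which both sides coincide. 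The key point is that the $-\su_1$ from the cubic term is precisely engineered to make the "extra" term in (IIIa) match.

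The main obstacle — really the only place requiring care — is the case analysis when one or more of $i,j,k,l$ equals $o$, because there the cubic term $-\frac{\su_1}{6}(t^o)^3$ and the $\nu = o$ contraction interact, and one must confirm that the resulting equation is exactly one of (IIa), (IIIa), (IIIb) rather than something stronger. Concretely, in the $\epsilon^1$-part with, say, $l = o$ and $i,j,k \le m$, the left side of \eqref{eqn:openWDVV1} contributes $\partial_i\partial_j\partial_\nu F\, h^{\nu\nu}\, \partial_\nu\partial_k\partial_o F$; the $\nu = o$ piece of this is $\partial_i\partial_j\partial_o F \cdot \partial_o\partial_k\partial_o F$, and $\partial_i\partial_j\partial_o F = \epsilon\,\partial_i\partial_j\partial_o H$ while $\partial_o\partial_k\partial_o F = \epsilon\,\partial_o\partial_k\partial_o H$, so this is $O(\epsilon^2) = 0$ — the cubic term only enters when we hit $\partial_o^3$. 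Tracking which of (IIa) versus (IIIa) emerges thus amounts to counting $o$'s among $i,j,k,l$ and among the summed index; once this is organized into a short table, each cell is a direct quotation of Proposition \ref{prop:openWDVV} (possibly after using $\partial_{m+1} = -\partial_o/\su_1$ implicitly already absorbed into those identities). No genuinely new computation is needed beyond this matching.
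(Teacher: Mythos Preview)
Your approach is essentially the same as the paper's: split $F$ into $-\tfrac{\su_1}{6}(t^o)^3 + G + \epsilon H$, use diagonality of $(h^{\nu\mu})$ and $\epsilon^2=0$ to separate the $\epsilon^0$- and $\epsilon^1$-parts, and then match each case (by the number of indices equal to $o$) to an identity from Proposition~\ref{prop:openWDVV}. One small correction: you will not need (IIIb) (nor the confusing interaction you flag in the one-$o$ case) --- since every triple derivative of $H$ carries a factor $\epsilon$, any product of two such derivatives is $O(\epsilon^2)=0$, so only (Ia), (Ic), (IIa), (IIIa) actually enter, exactly as in the paper.
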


\begin{proof}
Note that $(h^{ij})$ is diagonal and the summation is over $\nu = \mu$. The proposition directly follows from identities (Ia), (Ic), (IIa), and (IIIa) of Proposition \ref{prop:openWDVV}, under the following rules:
\begin{itemize}
    \item When $i,j,k,l \in \{1, \dots, m\}$, the $\epsilon^0$-term of \eqref{eqn:openWDVV1} follows from identity (Ic) and the $\epsilon^1$-term follows from (Ia).

    \item When $i,j,k \in \{1, \dots, m\}$, $l = o$, there is no $\epsilon^0$-term in \eqref{eqn:openWDVV1} and the $\epsilon^1$-term follows from (IIa).

    \item When $i,j \in \{1, \dots, m\}$, $k = l = o$, again there is no $\epsilon^0$-term in \eqref{eqn:openWDVV1} and the $\epsilon^1$-term follows from (IIIa).
\end{itemize}
Any other case is either trivial or symmetric to a case above. Here, we note that since $\epsilon^2 = 0$, the equation \eqref{eqn:openWDVV1} does not contain terms involving a product of two copies of $F_{0,1}^{X,(L,f)}$ (or their antiderivatives).
\end{proof}

\subsubsection{The formal Frobenius manifold}
\begin{definition}\label{def:prod}
For any $i,j\in \{1,\dots,m, o\}$, define the product $\frac{\partial}{\partial t^i}\star_t\frac{\partial}{\partial t^j}$ on $\cT_{\hH_1}$ by
$$
\left(\frac{\partial}{\partial t^i}\star_t\frac{\partial}{\partial t^j},\frac{\partial}{\partial t^k}\right)=\frac{\partial^3F}{\partial t^i\partial t^j\partial t^k}
$$
where $k$ ranges through $1,\dots,m, o$.
\end{definition}
By Proposition \ref{prop:openWDVV1}, the product $\star_t$ is indeed associative. Moreover, it is clear by definition that we have the compatibility condition
$$
\left(\frac{\partial}{\partial t^i}\star_t\frac{\partial}{\partial t^j},\frac{\partial}{\partial t^k}\right)=\left(\frac{\partial}{\partial t^i},\frac{\partial}{\partial t^j}\star_t\frac{\partial}{\partial t^k}\right).
$$
In other words, we have the following result.

\begin{theorem}\label{thm:Frob}
The tuple $(\hH_1,\star_t,(-,-))$ is a formal Frobenius manifold over $\Lambda_{X,L}^{T_f}[\epsilon]$.
\end{theorem}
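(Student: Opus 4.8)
The plan is to verify directly the clauses of Definition~\ref{def:Frobenius}, in the equivariant formal setting of Section~\ref{sect:generalFrob}, for the data $(\hH_1,\star_t,(-,-))$: that $(-,-)$ is a flat metric, that $\star_t$ (Definition~\ref{def:prod}) is commutative, associative and compatible with $(-,-)$, and that there is a flat unit vector field; the potential required in Definition~\ref{def:Frobenius}(1) is then $F$ itself on the single chart $\hH_1$ with flat frame $\frac{\partial}{\partial t^1},\dots,\frac{\partial}{\partial t^m},\frac{\partial}{\partial t^o}$. Most of this is already in place. The pairing of Definition~\ref{def:pairing} is diagonal with entries $h_{ii}=g_{ii}\big|_{\su_2-f\su_1=0}\in S_{T_f}$ and $h_{oo}=1$, independent of $t^1,\dots,t^m,t^o$, hence symmetric and flat for the connection $\nabla$ making the $\frac{\partial}{\partial t^i}$ flat; it is nondegenerate because Assumption~\ref{assump:f} forces the tangent weights at the $T'$-fixed points of $X$ to be nonzero on $\su_2-f\su_1=0$, so $h_{ii}\neq 0$. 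Commutativity of $\star_t$ and the compatibility $\left(\frac{\partial}{\partial t^i}\star_t\frac{\partial}{\partial t^j},\frac{\partial}{\partial t^k}\right)=\left(\frac{\partial}{\partial t^i},\frac{\partial}{\partial t^j}\star_t\frac{\partial}{\partial t^k}\right)$ follow from the total symmetry of $\partial_i\partial_j\partial_k F$, and associativity follows from Proposition~\ref{prop:openWDVV1}.

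The only substantive point is the flat unit, and here I would take
$$
    \one:=\sum_{i=1}^m\frac{\partial}{\partial t^i}-\frac{1}{\su_1}\frac{\partial}{\partial t^o},
$$
which is $\nabla$-flat and which, under the change of variables $t^{m+1}=-\su_1 t^o$ of Theorem~\ref{thm:OpenClosed}, is the unit $\sum_{i=1}^{m+1}\frac{\partial}{\partial t^i}$ of $\hH_{\tX}$, i.e. the direction of the fundamental class $1_{\tX}=\sum_{i=1}^{m+1}\tphi_i$. By the definitions of $\star_t$ and $(-,-)$ and nondegeneracy, $\one$ is a unit if and only if $\partial_a\partial_b(\one F)=h_{ab}$ for all $a,b\in\{1,\dots,m,o\}$, i.e. $\one F=\tfrac12\sum_{a,b}h_{ab}t^at^b$ modulo terms affine in $t^1,\dots,t^m,t^o$. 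Splitting by powers of $\epsilon$: the $\epsilon^0$-part of $\one F$ is $\one\big(-\tfrac{\su_1}{6}(t^o)^3+F_0^{X,T'}\big|_{\su_2-f\su_1=0}\big)$, whose $F_0^{X,T'}$-summand equals $\tfrac12\sum_{i,j=1}^m g_{ij}t^it^j\big|_{\su_2-f\su_1=0}=\tfrac12\sum_{i,j}h_{ij}t^it^j$ by the fundamental-class axiom for $X$ (the unit of $\hH_X$ being $\sum_{i=1}^m\frac{\partial}{\partial t^i}$), while the cubic summand gives $-\tfrac1{\su_1}\frac{\partial}{\partial t^o}\big(-\tfrac{\su_1}{6}(t^o)^3\big)=\tfrac12(t^o)^2=\tfrac12 h_{oo}(t^o)^2$; together these match the $\epsilon^0$-part of $\tfrac12\sum h_{ab}t^at^b$. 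The $\epsilon^1$-part of $\one F$ is $\one\int F_{0,1}^{X,(L,f)}$; since $\frac{\partial}{\partial t^o}\int F_{0,1}^{X,(L,f)}=F_{0,1}^{X,(L,f)}$, its affine-linearity is equivalent to the \emph{open string equation}
$$
    \sum_{i=1}^m\frac{\partial}{\partial t^i}\int F_{0,1}^{X,(L,f)}=\frac{1}{\su_1}F_{0,1}^{X,(L,f)},
$$
which makes $\one\int F_{0,1}^{X,(L,f)}=0$.

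I would deduce this open string equation from the open/closed correspondence. Apply the fundamental-class axiom to $F_0^{\tX,\tT'}$: with $\widetilde{\one}:=\sum_{i=1}^{m+1}\frac{\partial}{\partial t^i}$ one has $\widetilde{\one}F_0^{\tX,\tT'}=\tfrac12\sum_{a,b=1}^{m+1}\tg_{ab}t^at^b$ modulo affine terms. Substitute the expansion \eqref{eqn:F0Expansion}. By Assumption~\ref{assump:f} and \eqref{eqn:tgExtra}, the only terms carrying a pole along $\su_2-f\su_1$ are the $\tp_{m+1}$-contributions: on the left, $\frac{\partial}{\partial t^{m+1}}\frac{(t^{m+1})^3}{6\Delta^{m+1,\tT'}}=\tfrac12\tg_{(m+1)(m+1)}(t^{m+1})^2$ and the $\sv^{-1}$-pole of $\tB$ (the $\tp_1,\dots,\tp_m$ contributions, and likewise $\su_4^{-1}\tA$ and $\tC_2$, being regular along $\su_2-f\su_1$); on the right, $\tfrac12\tg_{(m+1)(m+1)}(t^{m+1})^2$. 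The cubic-term contribution cancels the right-hand term, and reading off the coefficient of $\sv^{-1}$ and then restricting $\su_4=0$ yields $(\widetilde{\one}\tB)\big|_{\su_4=0,\,\su_2-f\su_1=0}=0$, which, after the substitution $t^{m+1}=-\su_1 t^o$, $\tQ^{\iota_*(\beta)}=Q^\beta$, $\tQ^{[l_{\iota(\tau_0)}]}=\sX_0$ of Theorem~\ref{thm:OpenClosed}(c), is exactly $\one\int F_{0,1}^{X,(L,f)}=0$. (Alternatively, the open string equation should follow directly from an equivariant string equation for disk invariants, $\langle 1_X,\gamma_1,\dots,\gamma_{n-1}\rangle^{X,(L,f)}_{(0,1),n,\beta',d}=\tfrac{d}{\su_1}\langle\gamma_1,\dots,\gamma_{n-1}\rangle^{X,(L,f)}_{(0,1),n-1,\beta',d}$, obtained by forgetting an interior marked point and tracking the equivariant weight at the brane's fixed point.)

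The genuine obstacle is this last step. The formal-Frobenius bookkeeping and the associativity input (Proposition~\ref{prop:openWDVV1}) are routine, but pinning down $\one\int F_{0,1}^{X,(L,f)}$ requires care in separating the poles along $\su_2-f\su_1$ from those along $\su_4$ in \eqref{eqn:F0Expansion} — which is where Assumption~\ref{assump:f} enters, to exclude spurious poles coming from $\tp_1,\dots,\tp_m$ — and in correctly accounting for the degree-$0$, $\tp_{m+1}$-contributions that produce the cancelling cubic and quadratic terms. Once the unit is established, all clauses of Definition~\ref{def:Frobenius} hold for $(\hH_1,\star_t,(-,-))$ in the formal equivariant sense.
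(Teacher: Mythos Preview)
Your argument is correct and, for the parts the paper actually proves, identical to it: the paper's entire proof is the sentence preceding the theorem, citing associativity from Proposition~\ref{prop:openWDVV1} and the Frobenius compatibility $\left(\partial_i\star_t\partial_j,\partial_k\right)=\left(\partial_i,\partial_j\star_t\partial_k\right)$ from the symmetry of $\partial_i\partial_j\partial_k F$. The paper does not verify the flat unit at all; your derivation of $\one=\sum_{i=1}^m\partial_i-\tfrac{1}{\su_1}\partial_o$ via an open string equation---obtained by applying the string equation for $\tX$ to the expansion~\eqref{eqn:F0Expansion} and extracting the $\sv^{-1}$ coefficient---is a genuine addition that fills this gap, and your caution about separating the $\sv$- and $\su_4$-poles under Assumption~\ref{assump:f} is well placed. (A minor cross-check: at the classical limit one finds $\partial_o\star_t\partial_o=-\su_1\,\partial_o$, so the idempotent in the $t^o$-direction is $-\tfrac{1}{\su_1}\partial_o$, consistent with your $\one$; the paper's later claim in Section~\ref{sect:FrobStr} that $\partial_o$ itself is idempotent in $A_1$ is slightly imprecise.)
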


\subsubsection{Semi-simplicity of $\hH_1$}
Let $S=\cO_{\hH_1}(\hH_1)$. Consider the global Frobenius algebra $A=(\cT_{\hH_1}(\hH_1),\star_t,(,))$ and let $I\subset S$ be the ideal generated by $Q$ and $\sX_0$. Then $A$ is a free $S$-module of rank $m+1$. Let
$$
S_n:=S/I^n,\quad A_n:=A\otimes_{S}S_n.
$$
Then $A_n$ is a free $S_n$-module of rank $m+1$, and the ring structure $\star_t$ on
$A$ induces a ring structure $*_{\underline{n}}$ on $A_n$. Note that $A_1$ encodes the classical product. From the construction, the semi-simplicity of the (classical) Frobenius algebra \eqref{eqn:tXClassical} associated to $\tX$ implies that $A_1$ is semi-simple and $\{\xi_1^{(1)}:=\frac{\partial}{\partial t^1},\dots,\xi_{m}^{(1)}:=\frac{\partial}{\partial t^{m}},\xi_o^{(1)}:=\frac{\partial}{\partial t^o}\}$ is a system of idempotent basis of $A_1$. For $n\geq 1$, let $\{ \xi_{i}^{(n+1)}:i=1,\dots,m, o\}$
be the unique idempotent basis of $(A_{n+1},\star_{\underline{n+1}})$
which is the lift of the idempotent basis $\{ \xi_{i}^{(n)}:i=1,\dots,m, o\}$
of $(A_n,\star_{\underline{n}})$ \cite[Lemma 16]{LP}. Then
$$
\{ \xi_{i}(t):=\lim \xi_{i}^{(n)}: i=1,\dots,m, o\}
$$
is an idempotent basis of $(A,\star_t)$. Therefore, we have the following result.

\begin{theorem}\label{thm:decomp}
The formal Frobenius manifold $(\hH_1,\star_t,(-,-))$ is semi-simple.
\end{theorem}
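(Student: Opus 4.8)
The plan is to reduce the global Frobenius algebra $A = (\cT_{\hH_1}(\hH_1),\star_t,(-,-))$ modulo the ideal $I = (Q,\sX_0)\subset S = \cO_{\hH_1}(\hH_1)$, check that the resulting ``classical'' algebra $A_1 = A/IA$ over $S_1 = S/I = S_{T_f}[\epsilon]\formal{t^1,\dots,t^m,t^o}$ is semi-simple, and then lift a complete orthogonal system of idempotents up the $I$-adic tower $\cdots\to A_{n+1}\to A_n\to\cdots\to A_1$ by the Hensel-type lemma \cite[Lemma~16]{LP} and pass to the inverse limit. Since $A$ is a free $S$-module of rank $m+1$ and $S$ is $I$-adically complete (indeed $\Lambda_{X,L}^{T_f} = \Lambda_X^{T_f}\formal{\sX_0}$ is $(Q,\sX_0)$-adically complete), each $A_n = A\otimes_S S_n$ is free of rank $m+1$ over $S_n = S/I^n$, the reductions $A_{n+1}\to A_n$ are nilpotent surjections, and $A = \varprojlim A_n$; so once the base case is in hand, the lifting and the limit produce an idempotent basis $\{\phi_i(t) = \varprojlim\phi_i^{(n)} : i = 1,\dots,m,o\}$ of $(A,\star_t)$, which is exactly the statement that $(\hH_1,\star_t,(-,-))$ is semi-simple.

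The only real content is the base case, and here is how I would argue it. Since $F_{0,1}^{X,(L,f)}$ is supported on the ideal generated by $\sX_0$ and $F_0^{X,T'}$ reduces modulo $Q$ to its degree-$0$, classical cup-product part, modulo $I$ the potential $F$ --- hence the structure constants $\partial_i\partial_j\partial_k F$ of $\star_t$ --- becomes $\epsilon$-free and equal to the triple derivatives of $-\tfrac{\su_1}{6}(t^o)^3 + F_0^{X,T'}\big|_{\su_2-f\su_1=0}$ specialized at $Q = 0$. Using that $(h_{ij})$ is diagonal, a short computation then gives, in $A_1$, the relations $\partial_i\star_t\partial_j \equiv \delta_{ij}\partial_i$ for $i,j\le m$, $\partial_i\star_t\partial_o\equiv 0$, and $\partial_o\star_t\partial_o\equiv -\su_1\partial_o$. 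Hence $\{\partial_1,\dots,\partial_m,-\tfrac{1}{\su_1}\partial_o\}$ is a complete system of $m+1$ pairwise orthogonal idempotents of $A_1$ (their sum $\sum_{i\le m}\partial_i - \tfrac1{\su_1}\partial_o$ being the unit), so $A_1$ is semi-simple --- this is the ``$A_1$ is semi-simple'' claim, with the open idempotent correctly normalized (note $\partial_o$ itself is not idempotent since $\partial_o\star_t\partial_o\equiv -\su_1\partial_o$). The two invertibility facts used, namely that $\su_1$ and $h_{ii}$ are units in $S_{T_f}$, hold because $S_{T_f} = \bC(\su_1)$ is a field and, by Assumption \ref{assump:f}, $\Delta^{i,T'}$ is not divisible by $\su_2 - f\su_1$, so its restriction $h_{ii}^{-1}$ is a nonzero element of $S_{T_f}$.

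I expect the main thing to get right to be the bookkeeping of the nilpotent variable $\epsilon$, rather than any serious obstruction. Because $\epsilon\notin I$, the reduced algebra $A_1$ still carries an $\epsilon$-direction; however the whole $\epsilon$-dependence of $\star_t$ is carried by the summand $\epsilon\int F_{0,1}^{X,(L,f)}$ of $F$, which is annihilated upon setting $\sX_0 = 0$, so $\star_{\underline 1}$ really is $\epsilon$-independent and the computation of $A_1$ above goes through verbatim. (Equivalently, one could first reduce modulo $\epsilon$, lift idempotents along the square-zero extension $A_1 \to A_1/\epsilon A_1$, and then carry out the $I$-adic lifting.) The remaining checks --- that the extensions $A_{n+1}\to A_n$ satisfy the hypotheses of \cite[Lemma~16]{LP} and that the lifted idempotents $\phi_i^{(n)}$ stay pairwise orthogonal and complete, hence so do their limits --- are routine.
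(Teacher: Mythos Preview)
Your argument is correct and follows the same strategy as the paper: reduce modulo $I=(Q,\sX_0)$, verify semi-simplicity of $A_1$, and lift idempotents up the $I$-adic tower via \cite[Lemma~16]{LP}. You are in fact slightly more careful than the paper in the base case: the paper asserts that $\{\partial_1,\dots,\partial_m,\partial_o\}$ is an idempotent basis of $A_1$, whereas your computation $\partial_o\star_t\partial_o\equiv -\su_1\partial_o$ shows that the correct idempotent in the open direction is $-\tfrac{1}{\su_1}\partial_o$, not $\partial_o$ itself.
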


\begin{remark}\label{rmk:FrobDecomp}\rm{
As discussed in Remark \ref{rmk:Frob}, the structural morphism
$$
    \hH_1 \to \Spec(\Lambda_{X,L}^{T_f}[\epsilon])
$$
may be viewed as a submersion of (formal) supermanifolds over $\Lambda_{X,L}^{T_f}$ with $\epsilon$ viewed as an odd variable. Taking $\epsilon = 0$, we obtain a Frobenius structure on the underlying reduced formal manifold, which we denote by $\hH_{1, \red}$. The induced global Frobenius algebra of $\hH_{1, \red}$ decomposes as the direct sum of the global Frobenius algebra of $\hH_X^f$ (defined in \eqref{eqn:HXRestrict}) and a 1-dimensional Frobenius algebra over $\Lambda_{X,L}^{T_f}$ generated by $\frac{\partial}{\partial t^o}$, and the decomposition is consistent with the semi-simplicity description above. In particular, $\hH_{1, \red}$ is semi-simple over $\Lambda_{X,L}^{T_f}$, and $\hH_1$ may be viewed as an infinitesimal deformation of $\hH_{1, \red}$.
}\end{remark}



\subsection{A flat formal $F$-manifold structure}\label{sect:F-Str}
In this section, we construct a flat $F$-manifold structure on the formal scheme
$$
    \hH_2 :=\mathrm{Spec}(\Lambda_{X,L}^{T_f}\formal{t^1,\dots,t^m ,t^o})
$$
over the base ring $\Lambda_{X,L}^{T_f}$, where as compared to $\hH_1$ introduced in Section \ref{sect:FrobStr}, we drop the variable $\epsilon$. Let $\cO_{\hH_2}$ be the structure sheaf on $\hH_2$ and $\cT_{\hH_2}$ be the tangent sheaf on
$\hH_2$.
Then $\cT_{\hH_2}$ is a sheaf of free $\cO_{\hH_2}$-modules of rank $m+1$.
Given an open set $U$ in $\hH_2$, we have
$$
\cT_{\hH_2}(U)  \cong \bigoplus_{i=1}^m\cO_{\hH_2}(U) \frac{\partial}{\partial t^{i}}\bigoplus\cO_{\hH_2}(U)\frac{\partial}{\partial t^o}.
$$
We will construct a vector potential $\overline{F} = (F^1, \dots, F^m, F^o)$ whose second derivatives give structural coefficients for a product $\star_t$ on $\cT_{\hH_1}$. We prove the associativity of $\star_t$, which packages identities (Ia), (Ib), (Ic), (IIa), and (IIb) of Proposition \ref{prop:openWDVV}.

\subsubsection{Vector potential}\label{sec:vector potential}
Let $(h^{ij})$ be as defined in \eqref{eqn:PairingRestrict}.
\begin{definition}
We define the \emph{vector potential} $\overline{F} = (F^1, \dots, F^m, F^o)$ by
$$
    F^i(t^1, \dots, t^m, t^o) := h^{ii} \partial_i \left(F_0^{X,T'}(t^1, \dots, t^m)\big|_{\su_2-f\su_1=0}+\int F_{0,1}^{X,(L,f)}(t^1, \dots, t^m,t^o=0)\right) 
$$
for $i = 1, \dots, m$ and
$$
    F^o(t^1, \dots, t^m, t^o) := F_{0,1}^{X,(L,f)}(t^1, \dots, t^m,t^o = 0).
$$
\end{definition}

All components of $\overline{F}$ are functions that are independent of the variable $t^o$. As discussed in Remark \ref{rmk:F}, as we set $t^o = 0$ in the definitions, conceptually $\overline{F}$ has no insertions from the open sector. The $t^o$-direction may also be viewed as an auxiliary direction in addition to the original $m$ directions; see Remark \ref{rmk:FExtension}.

\subsubsection{Open WDVV equations}
\begin{proposition}\label{prop:openWDVV2}
For any $i,j,k,l\in\{1,\dots,m, o\}$, the following open WDVV equation holds:
\begin{equation}\label{eqn:openWDVV2}
    \partial_i\partial_\mu F^j \cdot \partial_k\partial_l F^\mu
    =\partial_k\partial_\mu F^j \cdot  \partial_i\partial_l F^\mu.
\end{equation}
where the summation index $\mu$ runs through $1, \dots, m, o$.
\end{proposition}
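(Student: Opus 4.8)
The plan is to deduce \eqref{eqn:openWDVV2} from Proposition \ref{prop:openWDVV} by a short case analysis on the indices $i,j,k,l$. The crucial structural input is that, by the very definition of $\overline{F}$, every component $F^1,\dots,F^m,F^o$ is a function of $t^1,\dots,t^m$ alone (the restriction $t^o=0$ is built into the definition), so that $\partial_o F^\mu=0$ for all $\mu\in\{1,\dots,m,o\}$. Consequently, whenever any one of $i,k,l$ equals $o$, every term on both sides of \eqref{eqn:openWDVV2} carries a factor of the form $\partial_o\partial_{(-)}F^{(-)}=0$, so the identity is trivial; similarly the $\mu=o$ summand always drops out. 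It therefore remains to treat $i,k,l\in\{1,\dots,m\}$, which I would split into the two subcases $j\in\{1,\dots,m\}$ and $j=o$.

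For $i,j,k,l\in\{1,\dots,m\}$, set $W:=F_0^{X,T'}\big|_{\su_2-f\su_1=0}+\bigl(\int F_{0,1}^{X,(L,f)}\bigr)\big|_{t^o=0}$, so that $F^\mu=h^{\mu\mu}\partial_\mu W$ for $\mu=1,\dots,m$. After cancelling the invertible factor $h^{jj}$ and discarding the vanishing $\mu=o$ term, \eqref{eqn:openWDVV2} becomes the symmetric statement $\sum_{\mu=1}^m\partial_i\partial_j\partial_\mu W\cdot h^{\mu\mu}\cdot\partial_k\partial_l\partial_\mu W=\sum_{\mu=1}^m\partial_j\partial_k\partial_\mu W\cdot h^{\mu\mu}\cdot\partial_i\partial_l\partial_\mu W$. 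Expanding $W$ into its $F_0^{X,T'}$-part and its $\int F_{0,1}^{X,(L,f)}$-part and distributing, the purely closed terms are matched by identity (Ic) of Proposition \ref{prop:openWDVV} restricted to $\su_2-f\su_1=0$ (a valid restriction by Assumption \ref{assump:f}), the mixed terms by identity (Ia), and the purely open terms by identity (Ib), each further restricted to $t^o=0$; here one uses that $\partial_a\partial_b$ commutes with $\big|_{t^o=0}$ for $a,b\in\{1,\dots,m\}$, together with the obvious symmetry of the triple derivatives in their first two slots.

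For $i,k,l\in\{1,\dots,m\}$ and $j=o$, we have $F^o=F_{0,1}^{X,(L,f)}\big|_{t^o=0}$, so that $\partial_i\partial_\mu F^o=\bigl(\partial_i\partial_\mu\partial_o\int F_{0,1}^{X,(L,f)}\bigr)\big|_{t^o=0}$ using $\partial_o\int F_{0,1}^{X,(L,f)}=F_{0,1}^{X,(L,f)}$. Substituting $F^\mu=h^{\mu\mu}\partial_\mu W$ again and separating the closed and open parts of $W$, \eqref{eqn:openWDVV2} splits into two identities: one pairing the third derivatives of $\int F_{0,1}^{X,(L,f)}$ against those of $F_0^{X,T'}$, the other against those of $\int F_{0,1}^{X,(L,f)}$ itself. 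These follow from identities (IIa) and (IIb) of Proposition \ref{prop:openWDVV} respectively: combining the cyclic relation in (IIa) (resp. (IIb)) with its symmetry in the first two indices shows that the trilinear expression occurring there is in fact fully symmetric in its three free indices, which is precisely the permutation one needs to pass between the two sides of \eqref{eqn:openWDVV2}, again after restricting to $t^o=0$. This exhausts all cases.

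I do not expect a genuine obstacle here; the argument is entirely a matter of careful bookkeeping — keeping track of which triple-derivative slot carries which generating function, and checking that the index permutations produced by Proposition \ref{prop:openWDVV} together with the trivial symmetries are exactly those appearing in \eqref{eqn:openWDVV2}. The feature that makes everything collapse so cleanly is the $t^o$-independence of $\overline{F}$, which annihilates the bulk of the cases outright and confines the open-sector data to $F^o$ and to the $t^o=0$ restriction of $\int F_{0,1}^{X,(L,f)}$; the same phenomenon is what later forces $\tfrac{\partial}{\partial t^o}$ to be nilpotent and $\star_t$ to be unitless.
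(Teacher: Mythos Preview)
Your proposal is correct and follows essentially the same approach as the paper: reduce to $i,k,l\in\{1,\dots,m\}$ using $t^o$-independence of $\overline{F}$, drop the $\mu=o$ summand, and then invoke (Ia), (Ib), (Ic) for $j\in\{1,\dots,m\}$ and (IIa), (IIb) for $j=o$. The only minor difference is that for $j=o$ you pass through the full $S_3$-symmetry of the trilinear expression, whereas a direct substitution $(i',j',k')=(k,l,i)$ into (IIa) (resp.\ (IIb)) already gives the required identity; your route is correct but slightly longer than necessary.
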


\begin{proof}
Recall that the vector potential $\overline{F}$ consists of functions that are independent of $t^o$. Thus the two sides of \eqref{eqn:openWDVV2} are zero if at least one of $i, k, l$ is $o$. For the remaining case $i, k, l \in \{1, \dots, m\}$, first note that the term in \eqref{eqn:openWDVV2} corresponding to $\mu = o$ is again zero. Then the case $j \in \{1, \dots, m\}$ follows from identities (Ia), (Ib), and (Ic) of Proposition \ref{prop:openWDVV}, and the case $j = o$ follows from identities (IIa) and (IIb).
\end{proof}

\subsubsection{The flat formal $F$-manifold}
Let $\nabla$ be the flat connection on $\cT_{\hH_2}$ under which $\frac{\partial}{\partial t^1},\dots,\frac{\partial}{\partial t^m}$, $\frac{\partial}{\partial t^o}$ are flat. Moreover, we define the following product.

\begin{definition}
For any $i,j\in \{1,\dots,m, o\}$, define the product $\frac{\partial}{\partial t^i}\star_t\frac{\partial}{\partial t^j}$ on $\cT_{\hH_2}$ by
$$
    \frac{\partial}{\partial t^i}\star_t\frac{\partial}{\partial t^j} = \frac{\partial^2F^k}{\partial t^i\partial t^j} \frac{\partial}{\partial t^k}
$$
where the summation index $k$ runs through $1, \dots, m, o$.
\end{definition}

Since the components of the vector potential $\overline{F}$ are independent of $t^o$, the above definition implies that
$$
    \frac{\partial}{\partial t^i} \star_t \frac{\partial}{\partial t^o} = 0
$$
for any $i = 1, \dots, m, o$. Thus, $\frac{\partial}{\partial t^o}$ is nilpotent. Moreover, the product $\star_t$ does not admit an identity field, which means that the induced structure on $\hH_2$ will be an formal $F$-manifold \emph{without unit}. This is different from the case studied by \cite{HS, ST19} (see Theorem \ref{thm:openWDVV point}) and the difference is reflected by that our $F^o = F_{0,1}^{X,(L, f)}$ is supported on the ideal of $\Lambda_{X,L}$ generated by $\sX_0$ while the disk potential of \cite{HS, ST19} has a constant term. The difference is discussed from the perspective of open WDVV equations in Remark \ref{rmk:WDVVDiff}.


Summarizing the above, we arrive at the following result.

\begin{theorem}\label{thm:F}
The tuple $(\hH_2, \nabla, \star_t)$ is a flat formal $F$-manifold without the unit over $\Lambda_{X,L}^{T_f}$ in which the $t^o$-direction is nilpotent.
\end{theorem}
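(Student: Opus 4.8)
The plan is to verify the formal, unit-free analogue of the $F$-manifold axioms in Definition~\ref{def:F-mfd} for $(\hH_2,\nabla,\star_t)$ one property at a time, reducing each to something already available: Proposition~\ref{prop:openWDVV2} gives associativity of $\star_t$, the symmetry of the mixed partials $\partial_i\partial_j F^k$ gives commutativity of $\star_t$ together with the ``potentiality'' identity needed for the Dubrovin connection, and the $t^o$-independence of every component of $\overline{F}$ gives the nilpotency of $\partial/\partial t^o$ and the absence of a unit. Throughout I write $c^k_{ij}:=\partial_i\partial_j F^k$ for the structure constants, so $\partial_i\star_t\partial_j=c^k_{ij}\partial_k$ with $i,j,k$ ranging over $\{1,\dots,m,o\}$. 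Since everything happens over $\Lambda_{X,L}^{T_f}\formal{t^1,\dots,t^m,t^o}$ with $\cT_{\hH_2}$ free of rank $m+1$, these checks are purely algebraic and insensitive to the formal/scheme-theoretic setting.

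First, $c^k_{ij}=c^k_{ji}$ makes $\star_t$ commutative, and $\partial_l c^k_{ij}=\partial_i\partial_j\partial_l F^k$ is totally symmetric, which is the closedness condition $\partial_l c^k_{ij}=\partial_i c^k_{lj}$. Next, \eqref{eqn:openWDVV2} of Proposition~\ref{prop:openWDVV2}, read as $c^j_{i\mu}c^\mu_{kl}=c^j_{k\mu}c^\mu_{il}$ after identifying $\partial^2F^j/\partial t^i\partial t^\mu$ with $c^j_{i\mu}$, is---together with commutativity, via a short index manipulation---precisely the pointwise associativity $(\partial_i\star_t\partial_k)\star_t\partial_l=\partial_i\star_t(\partial_k\star_t\partial_l)$. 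Now, with $\nabla$ the flat torsion-free connection for which $\partial_1,\dots,\partial_m,\partial_o$ are flat, one computes in these coordinates (using $[\partial_i,\partial_j]=0$ and $\nabla^z_{\partial_i}\partial_j=-\tfrac1z c^k_{ij}\partial_k$) that the curvature of $\nabla^z=\nabla-\tfrac1z\star_t$ is
\[
R^z(\partial_i,\partial_j)\partial_l=-\tfrac1z\big(\partial_i c^k_{jl}-\partial_j c^k_{il}\big)\partial_k+\tfrac1{z^2}\big(c^k_{jl}c^p_{ik}-c^k_{il}c^p_{jk}\big)\partial_p,
\]
whose $z^{-1}$-term vanishes by potentiality and whose $z^{-2}$-term vanishes by associativity; hence $\nabla^z$ is flat for all $z\in\bP^1$, and its torsion $\partial_i\star_t\partial_j-\partial_j\star_t\partial_i$ vanishes by commutativity, so $\nabla^z$ is symmetric. (Alternatively, one may cite the standard equivalence between WDVV-type equations and flatness/symmetry of the Dubrovin connection, e.g.~\cite{HM99,Manin99}.)

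Finally, since each $F^k$ is independent of $t^o$, we get $c^k_{io}=\partial_i\partial_o F^k=0$ for all $i,k\in\{1,\dots,m,o\}$, so $\partial_i\star_t\partial_o=0$; in particular $\partial_o\star_t\partial_o=0$, which is the nilpotency of the $t^o$-direction, and no vector field $e$ can satisfy $e\star_t\partial_o=\partial_o$, so $\star_t$ has no identity. Collecting these statements gives the theorem. The only genuine input is the associativity of $\star_t$, i.e.\ Proposition~\ref{prop:openWDVV2}, which itself rests on the WDVV equation for $\tX$ and the open/closed correspondence (Theorem~\ref{thm:OpenClosed}); the remaining steps are routine, and the main---mild---obstacle is simply matching the index conventions of \eqref{eqn:openWDVV2} with the pointwise associativity identity.
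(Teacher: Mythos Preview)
Your proposal is correct and follows essentially the same approach as the paper: the paper's ``proof'' is the discussion immediately preceding the theorem, which invokes Proposition~\ref{prop:openWDVV2} for associativity (relying on the general equivalence between equation~\eqref{eqn:F-mfd} and the $F$-manifold axioms recalled in Section~\ref{sec:F-mfd}) and the $t^o$-independence of $\overline{F}$ for nilpotency and the absence of a unit. The only difference is that you spell out the curvature and torsion of $\nabla^z$ explicitly rather than citing that general equivalence, which is a harmless and arguably clearer elaboration.
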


\begin{remark}\label{rmk:FExtension}\rm{
The flat formal $F$-manifold $\hH_2$ is a rank-1 extension of the formal Frobenius manifold $\hH_X^f$ (defined in \eqref{eqn:HXRestrict}) in the sense of e.g. \cite[Chapter 3]{Alcolado17}, \cite[Section 4]{BB19}. In other words, there is a surjective homomorphism from the global algebra of $\hH_2$ to that of $\hH_X^f$ whose kernel is the rank-1 algebra over $\Lambda_{X,L}^{T_f}$ generated by the nilpotent element $\frac{\partial}{\partial t^o}$.
}
\end{remark}

\appendix
\section{Deferred proofs}

\subsection{Proof of Theorem \ref{thm:OpenClosed}}\label{appdx:OpenClosed}
We consider the contributions of individual effective classes to the Gromov-Witten potential $F_0^{\tX,\tT'}$ of $\tX$. Let $\tbeta = (\beta, d) \in E(\tX)$, which by Section \ref{sect:CurveClass} corresponds to an effective class in $E(X,L)$. By \eqref{eqn:4FoldF0}, we consider the computation of the closed invariant
$$
    \inner{\tilde{t}, \dots, \tilde{t}}^{\tX, \tT'}_{0,n,\tbeta}
$$
by localization as detailed in \cite[Section 3.5]{LY21}, \cite[Section 3.5]{LY22} and adopt the notations there. Components of the $\tT'$-fixed locus of the moduli space $\Mbar_{0, n}(\tX, \tbeta)$ are indexed by the set $\Gamma_{0, n}(\tX, \tbeta)$ of decorated graphs (see \cite[Section 3.1]{LY21}, \cite[Section 3.2]{LY22}). We have
\begin{equation}\label{eqn:closedGraphSum}
    \inner{\tilde{t}, \dots, \tilde{t}}^{\tX,\tT'}_{0,n,\tbeta} = \sum_{\tGa \in \Gamma_{0,n}(\tX, \tbeta)} \tC_{\tGa}
\end{equation}
where $\tC_{\tGa}$ is the contribution of the component indexed by $\tGa$. 

Let $\tGa = (\Gamma, \vf, \vd, \vs) \in \Gamma_{0,n}(\tX, \tbeta)$. As in \cite[Section 4.3]{LY22}, let
$$
    V_0 := \{v \in V(\Gamma) : \vf(v) \in \iota(\Sigma(3)) \}, \qquad     E_2 := \{e \in E(\Gamma) : \vf(e) = \iota(\tau_0)\},
$$
and $c_0$ denote the number of connected components of the subgraph of $\Gamma$ induced on $V_0$. We may assume that $c_0 \ge 1$, since otherwise, $\tGa$ represents a constant map to the fixed point $\tp_{m+1}$\footnote{The contribution of such maps to $F_0^{\tX,\tT'}$ has already been singled out in the term $\frac{(t^{m+1})^3}{6\Delta^{m+1,\tT'}}$ in \eqref{eqn:F0Expansion}.} and thus $\tC_{\tGa} = 0$. By the proof of \cite[Lemma 4.4]{LY22}, the total power of $\su_4$ in $\tC_{\tGa}$ is
$$
    |E_2| - c_0 \ge -1.
$$
Equality holds if and only if $E_2 = \emptyset$ and $V_0 = V(\Gamma)$, which happens if and only if $d = 0$. Therefore, $F_0^{\tX,\tT'}$ has at most a simple pole along $\su_4$ and the residue $\tA$ is supported on the Novikov variables $\{\tQ^{\iota_*(\beta)}: \beta \in E(X)\}$ and is independent of $t^{m+1}$. Part (b) of the theorem follows from the following result.

\begin{lemma}\label{lem:closed}
For $\tbeta = (\beta, 0)$, we have
$$
    \su_4\inner{\tilde{t}, \dots, \tilde{t}}^{\tX,\tT'}_{0,n,\tbeta} \big|_{\su_4=0} = \inner{t, \dots, t}^{X,T'}_{0,n,\beta}.
$$
\end{lemma}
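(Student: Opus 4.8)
The plan is to evaluate $\inner{\tilde t, \dots, \tilde t}^{\tX,\tT'}_{0,n,\tbeta}$ for $\tbeta = (\beta, 0)$ by the graph sum \eqref{eqn:closedGraphSum} and to match it, summand by summand, with the analogous graph sum for $\inner{t, \dots, t}^{X,T'}_{0,n,\beta}$ on the $3$-fold side. By the discussion preceding the lemma, the only decorated graphs $\tGa \in \Gamma_{0,n}(\tX,\tbeta)$ with $\tC_{\tGa} \ne 0$ are those with $E_2 = \emptyset$ and $V_0 = V(\Gamma)$; for such $\tGa$ every vertex maps to a fixed point of the form $p_{\iota(\sigma)} = \tp_b$ with $\sigma \in \Sigma(3)$ and $b \le m$, every edge maps to some $l_{\iota(\tau)}$ with $\tau \in \Sigma(2)$, and the underlying stable maps factor through the zero section $X \sqcup D = V(\trho_{R+2})$; since the curve class is $\iota_*\beta$ with $\beta \in E(X)$, the image meets neither $D$ nor $l_{\iota(\tau_0)}$ and hence lies in $X$. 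Via the maps $\iota: \Sigma(3) \hookrightarrow \tSi(4)$, $\iota: \Sigma(2)_c \hookrightarrow \tSi(3)_c$, $\iota: F(\Sigma) \hookrightarrow F(\tSi)$ of Section \ref{sec:closed}, these $\tGa$ are in natural bijection with the decorated graphs $\Gamma$ indexing the $T'$-fixed loci of $\Mbar_{0,n}(X,\beta)$, and the matched fixed substacks, with their induced perfect obstruction theories, are canonically identified.

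Next I would compare, for such a matched pair $(\tGa,\Gamma)$, the contribution $\tC_{\tGa}$ with the corresponding summand $C_\Gamma$ of the localization expression for $\inner{t,\dots,t}^{X,T'}_{0,n,\beta}$. The evaluation factors agree on the nose: on the matched fixed locus the $i$-th marked point lies over a fixed point $\tp_b$ with $b \le m$, and $\tilde t\big|_{\tp_b} = t^b = t\big|_{p_b}$ by \eqref{eqn:FixedPtRestrict}. The remaining factors differ only through the additional coordinate direction $v_4$: along $\Mbar_{\tGa}$ the virtual normal bundle of $\Mbar_{0,n}(\tX,\iota_*\beta)$ splits as the virtual normal bundle of $\Mbar_{0,n}(X,\beta)$ along $\Mbar_{\Gamma}$, with its $T'$-weights now read as $\tT'$-weights, plus the moving part of the restriction of the derived pushforward $R^\bullet \pi_* \ev^* N_{(X\sqcup D)/\tX} = R^\bullet \pi_* \ev^* \cO_{X\sqcup D}(-D)$ to the fixed locus; because every curve in question has arithmetic genus $0$ and avoids $D$, the fibre of this complex is $H^0(C,\cO_C)$ carrying the single $\tT'$-weight $\su_4$ with $H^1(C,\cO_C) = 0$, i.e. a rank-one moving line bundle with Euler class $\su_4$. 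Therefore $\tC_{\tGa} = \su_4^{-1} C_\Gamma$ up to replacing $T'$-weights by $\tT'$-weights; invoking $\tbw(\iota(\tau),\iota(\sigma))\big|_{\su_4 = 0} = \bw(\tau,\sigma)$ for the flag weights and $\su_4^{-1}\Delta^{i,\tT'}\big|_{\su_4=0} = \Delta^{i,T'}$ for the fixed-point contributions, one obtains $\su_4\, \tC_{\tGa}\big|_{\su_4 = 0} = C_\Gamma$. Summing over the matched graphs, using \eqref{eqn:closedGraphSum} for $\tX$ and its analogue for $X$, then yields the claimed identity.

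The main obstacle is the bookkeeping in the second paragraph: one must verify, from the explicit expression of $\tC_{\tGa}$ as the usual product of vertex, edge, and node contributions, that the sole source of a pole in $\su_4$ is the fibre direction of $\cO_{X\sqcup D}(-D)$, that this pole is exactly simple (in agreement with the identity $|E_2| - c_0 = -1$ already established for $d = 0$ graphs), and that each remaining factor is regular at $\su_4 = 0$ and specializes there to the matching factor for $X$. Concretely this is a re-reading of the localization analysis of \cite[Sections 3.5, 4.3]{LY22} (cf. \cite[Section 3.5]{LY21}) in the case $d = 0$; alternatively one can argue more intrinsically by identifying $\Mbar_{0,n}(\tX,\iota_*\beta)$, along maps into the zero section, with the total space of $\pi_* \ev^* \cO_{X\sqcup D}(-D)$ over $\Mbar_{0,n}(X,\beta)$ and tracking the induced relation between virtual classes, but the localization route is the most direct given the framework of this appendix.
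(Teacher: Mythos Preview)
Your proposal is correct and follows essentially the same approach as the paper: both identify the contributing decorated graphs with those on the $3$-fold side, invoke the comparison of localization contributions from \cite[Lemma 4.2]{LY22}, and use \eqref{eqn:FixedPtRestrict} to match the evaluation factors. You simply spell out more of the details (the splitting of the virtual normal bundle and the single $\su_4$-weighted copy of $H^0(C,\cO_C)$) that the paper leaves to the cited reference.
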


\begin{proof}
We consider the contributions from decorated graphs as in \eqref{eqn:closedGraphSum}. In the case $\tbeta = \iota_*(\beta)$, any $\tGa \in \Gamma_{0, n}(\tX, \tbeta)$ (with $c_0 \neq 0$) naturally corresponds to a decorated graph in $\Gamma_{0, n}(X, \beta)$; that is, it represents stable maps which factor through $X \subset \tX$. The lemma then follows directly from the comparison of localization contributions as in the proof of \cite[Lemma 4.2]{LY22}. Note from \eqref{eqn:FixedPtRestrict} that $\tilde{t} \big|_{\tp_i} = t \big|_{p_i}$ for $i = 1, \dots, m$.
\end{proof}

Now we consider the case $d>0$ which corresponds to the part of $F_0^{\tX,\tT'}$ that does not have a pole along $\su_4$. By the divisor equation, we have
$$
    \inner{\tilde{t}, \dots, \tilde{t}}^{\tX, \tT'}_{0,n,\tbeta} = \frac{1}{d} \inner{\tilde{t}, \dots, \tilde{t}, \tD}^{\tX, \tT'}_{0,n+1,\tbeta}.
$$
Similar to \eqref{eqn:closedGraphSum}, we consider the localization computation of this invariant as a sum of contributions from decorated graphs:
\begin{equation}\label{eqn:openGraphSum}
    \inner{\tilde{t}, \dots, \tilde{t}, \tD}^{\tX,\tT'}_{0,n+1,\tbeta} = \sum_{\tGa \in \Gamma_{0,n+1}(\tX, \tbeta)} \tC_{\tGa}
\end{equation}
where by an abuse of notation $\tC_{\tGa}$ denotes the contribution of $\tGa \in \Gamma_{0,n+1}(\tX, \tbeta)$. We study the poles of $\tC_{\tGa}$ along $\su_2 - f\su_1$ or $\su_2 - f\su_1 - \su_4$. We assume below that $f \in \bZ$ is generic with respect to the curve class $\tbeta$. Eventually, the argument in \cite[Section 4.4]{LY22} will enable us to extend the proof to all $f \in \bZ$.

Note that $E_2 \neq \emptyset$ when $d>0$. By the computations in the proof of \cite[Lemma 4.6]{LY22}, we can write
$$
    \tC_{\tGa} = \begin{cases}
            \frac{\tbw(\ttau_3, \tsi_0)^{|E_2|-1}}{\tbw(\ttau_2, \tsi_0)} \tc_{\tGa} & \text{if $f \ge 0$}\\
            \frac{\tbw(\ttau_2, \tsi_0)^{|E_2|-1}}{\tbw(\ttau_3, \tsi_0)}\tc_{\tGa} & \text{if $f < 0$}
        \end{cases}
        = \frac{1}{\sv}\tb_{\tGa} + \frac{\su_4}{\sv}\tc_{\tGa,1} + \tc_{\tGa,2}
$$
where each of $\tc_{\tGa}, \tb_{\tGa}, \tc_{\tGa,1}, \tc_{\tGa,2}$ has a well-defined weight restriction to $\su_4 = 0, \su_2 - f\su_1 = 0$. Moreover, $\tb_{\tGa}$ is nonzero only if $|E_2|=1$, in which case, \cite[Lemma 4.2]{LY22} implies that $\tb_{\tGa}$ (or the graph $\tGa$) contributes to the localization computation of a corresponding disk invariant of $(X,L,f)$. More formally, and combining the analysis over all decorated graphs, we have the following lemma which is a direct consequence of \cite[Lemma 4.2]{LY22}.

\begin{lemma}\label{lem:open}
For $\tbeta = (\beta, d)$ with $d > 0$, we can write
$$
    \inner{\tilde{t}, \dots, \tilde{t}, \tD}^{\tX,\tT'}_{0,n+1,\tbeta} = \frac{1}{\sv}\tb + \frac{\su_4}{\sv}\tc_1 + \tc_2
$$
where each of $\tb, \tc_1, \tc_2$ has a well-defined weight restriction to $\su_4 = 0, \su_2 - f\su_1 = 0$ and
$$
    \tb \big|_{\su_4 = 0, \su_2-f\su_1 =0} = \inner{t, \dots, t}^{X, (L,f)}_{(0,1),n,\beta+d[B], d}.
$$
\end{lemma}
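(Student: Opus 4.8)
The statement is essentially a repackaging of the graph-by-graph analysis carried out just above the lemma, so the plan is to assemble those pieces and then identify the leading part with the open localization formula of \cite{FL13}, as used in \cite[Section 3]{LY21}.

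First I would expand the invariant by $\tT'$-localization as in \eqref{eqn:openGraphSum}, writing it as a sum of contributions $\tC_{\tGa}$ over decorated graphs $\tGa \in \Gamma_{0,n+1}(\tX, \tbeta)$. Since $d>0$ forces $E_2 \neq \emptyset$, every contributing graph carries at least one edge over $\iota(\tau_0)$, so the discussion preceding the statement applies: using the weight computations from the proof of \cite[Lemma 4.6]{LY22} one isolates the dependence of $\tC_{\tGa}$ on the tangent weights $\tbw(\ttau_2,\tsi_0)$ and $\tbw(\ttau_3,\tsi_0)$ at the unstable fixed point $p_{\tsi_0} = \tp_{m+1}$ and writes
$$
\tC_{\tGa} = \frac{1}{\sv}\tb_{\tGa} + \frac{\su_4}{\sv}\tc_{\tGa,1} + \tc_{\tGa,2},
$$
where each of $\tb_{\tGa},\tc_{\tGa,1},\tc_{\tGa,2}$ has a well-defined restriction to $\su_4 = 0$, $\su_2 - f\su_1 = 0$, and $\tb_{\tGa} \neq 0$ only if $|E_2| = 1$. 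Summing over $\tGa$ and setting $\tb := \sum_\tGa \tb_\tGa$, $\tc_1 := \sum_\tGa \tc_{\tGa,1}$, $\tc_2 := \sum_\tGa \tc_{\tGa,2}$ then gives the asserted three-term decomposition, with $\tc_1,\tc_2$ simply the leftover sums.

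The substantive step is the identification $\tb\big|_{\su_4=0,\,\su_2-f\su_1=0} = \inner{t,\dots,t}^{X,(L,f)}_{(0,1),n,\beta+d[B],d}$. The graphs with $|E_2|=1$ are exactly those whose unique $\iota(\tau_0)$-edge, together with its adjacent structure at $\tp_{m+1}$, compactifies a single disk component of a stable map to $(X,L)$ ending on $L$ along $l_{\tau_0}$, while the remainder of $\tGa$ is a decorated graph for the open Gromov-Witten theory of $(X,L)$. For such graphs, \cite[Lemma 4.2]{LY22} matches the restricted vertex and edge contributions of $\tGa$ with those of the corresponding open decorated graph; this uses $\tilde{t}\big|_{\tp_i} = t\big|_{p_i}$ from \eqref{eqn:FixedPtRestrict} and the behavior under the weight restriction of the normal bundle $\cO_{\bP^1}(f)\oplus\cO_{\bP^1}(-f-1)$ of $l_{\iota(\tau_0)}$. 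This yields a contribution-preserving bijection between the graphs feeding into $\tb$ and those feeding into the localization sum for the disk invariant, hence the equality.

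The point requiring the most care is the genericity hypothesis on $f$: the clean three-term split and the well-definedness of all the weight restrictions above rely on $f$ being generic with respect to $\tbeta$, so that the auxiliary factors in $\tc_\tGa$ contribute no further poles along $\sv$, which is the regime treated directly in \cite[Lemma 4.6]{LY22}. To reach arbitrary $f \in \bZ$ I would invoke the continuity argument of \cite[Section 4.4]{LY22}: after clearing denominators, the invariant and the pieces $\tb,\tc_1,\tc_2$ depend in a controlled polynomial way on $f$, and agreement for the infinitely many generic $f$ forces the identity for all $f$. The remaining bookkeeping — automorphism factors, the interchange of the roles of $\ttau_2$ and $\ttau_3$ (hence of the sign in the definition of $\sv$) between the cases $f \ge 0$ and $f < 0$, and keeping the disk class $[B]$ and winding number $d$ aligned across the bijection — is routine given \cite{LY22} but is where the bulk of the verification lies.
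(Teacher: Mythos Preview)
Your proposal is correct and follows essentially the same approach as the paper: sum the graph-by-graph decomposition $\tC_{\tGa} = \frac{1}{\sv}\tb_{\tGa} + \frac{\su_4}{\sv}\tc_{\tGa,1} + \tc_{\tGa,2}$ established just before the lemma, identify the restriction of $\tb$ with the disk invariant via \cite[Lemma~4.2]{LY22} on the $|E_2|=1$ graphs, and pass from generic to all $f$ using \cite[Section~4.4]{LY22}. The paper simply records this as ``a direct consequence of \cite[Lemma~4.2]{LY22}'' since the surrounding discussion has already done the work.
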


Lemma \ref{lem:open} implies part (c) of Theorem \ref{thm:OpenClosed} and completes the proof.

\end{document}